\definecolor{darkred}{rgb}{0.9,0.,.2}
\definecolor{darkblue}{rgb}{0.,0.,.6}
\definecolor{darkgreen}{rgb}{0.,.6,0.1}
\newcommand{\C}{\mathcal{C}}
\newcommand{\R}{\mathbb{R}}
\newcommand{\N}{\mathbb{N}}
\newcommand{\Z}{\mathbb{Z}}
\newcommand{\Q}{\mathbb{Q}}
\renewcommand{\ss}{\mathrm{SL_{n+1}(\mathbb{R})}}
\newcommand{\SO}{\mathrm{SO_{n,1}(\mathbb{R})}}
\renewcommand{\H}{\mathbb{H}}
\newcommand{\Mc}{\M^{\dagger}}
\newcommand{\U}{\mathcal{U}}
\newcommand{\F}{\mathcal{F}}
\newcommand{\M}{\mathcal{M}}
\newcommand{\Nn}{\mathcal{N}}
\newcommand{\V}{\mathcal{V}}
\renewcommand{\O}{\Omega}
\newcommand{\g}{\gamma}
\newcommand{\G}{\Gamma}
\renewcommand{\S}{\mathbb{S}^n}
\newcommand{\PP}{\mathbb{P}}
\newcommand{\E}{\mathcal{E}}
\newcommand{\Aut}{\textrm{Aut}}
\newcommand{\Stab}{\textrm{Stab}}
\newcommand{\dev}{\textrm{Dev}}
\newcommand{\hol}{\textrm{Hol}}
\newcommand{\Quo}{\Omega/\!\raisebox{-.80ex}{\ensuremath{ \,\Gamma}}}
\theoremstyle{plain}
\newtheorem{fait}{Fait}
\newtheorem*{thm}{Théorème}
\theoremstyle{definition}
\theoremstyle{remark}
\newtheorem{nota}{Notations}
\title[Exemples de variétés projectives strictement convexes de volume fini]{Exemples de vari\'et\'es projectives strictement convexes de volume fini en dimension quelconque}
\author{\href{mailto:ludovic.marquis@umpa.ens-lyon.fr}{Ludovic Marquis}}
\date{} 
\urladdr{\href{http://www.umpa.ens-lyon.fr/~lmarquis/}{www.umpa.ens-lyon.fr/~lmarquis/}}
\begin{document}

\newcommand\point{\textbullet}
\newcommand*\points[1]{%
 \ifcase\value{#1}\or
   $\cdot$ \or $\udotdot$ \or $\therefore$ \or
  $\diamonddots$  \or $\fivedots$ \or $\medstarofdavid$ \or $\octagon$
 \fi}
\renewcommand\theenumi{%
 \points{enumi}}
\renewcommand\labelenumi{%
 \points{enumi}}




\begin{abstract}
Nous construisons des exemples de variétés projectives $\Quo$ proprement convexes de volume fini, non hyperbolique, non compacte en toute dimension $n \geqslant 2$. Ceci nous permet au passage de construire des groupes discrets Zariski-dense de $\ss$ qui ne sont ni des réseaux de $\ss$, ni des groupes de Schottky. De plus, l'ouvert proprement convexe $\O$ est strictement convexe, même Gromov-hyperbolique. 
\end{abstract}

\begin{altabstract}
We build examples of properly convex projective manifold $\Quo$ which have finite volume, are not compact, nor hyperbolic in every dimension $n \geqslant 2$. On the way, we build Zariski-dense discrete subgroups of $\ss$ which are not lattice, nor Schottky groups. Moreover, the open properly convex set $\O$ is strictly-convex, even Gromov-hyperbolic.
\end{altabstract}

\maketitle
\setcounter{tocdepth}{2} 


\mainmatter

\section{Introduction}
\par{
Une variété projective proprement convexe est le quotient d'un ouvert ouvert proprement convexe $\O$ de l'espace projectif réel $\PP^n=\PP^n(\R)$ par un sous-groupe discret sans torsion $\G$ de $\ss$ qui préserve $\O$. Lorsque le quotient $\Quo$ est compact, ces variétés ont été beaucoup étudiés durant ces dernières années. On pourra lire par exemple les articles suivants : \cite{MR2010735,MR2094116,MR2195260,MR2218481,Crampon:2009xy,MR1053346}. Pour un survol de l'état du sujet en 2006, on pourra lire \cite{MR2464391}.
}
\\
\par{
Un ouvert proprement convexe de l'espace projectif réel possède une distance (dite de Hilbert) et une mesure (dite de Busemann) invariantes par les transformations projectives qui le préservent. Nous détaillons ces points au paragraphe \ref{geo_hilbert}, passons plutôt à l'exemple essentiel. 
}
\\
\par{
L'exemple le plus important d'ouvert proprement convexe est l'ellipsoïde. On considère la forme quadratique $q(x_1,....,x_{n+1})=x_1^2+...+x_n^2-x_{n+1}^2$ sur $\R^{n+1}$. On note $\E$ la projection du cône de lumière de $q$ (i.e. l'ensemble des points $\{ x\in \R^{n+1} \,\mid \, q(x) < 0 \}$) sur $\PP^n$. Nous appelerons toute image par une transformation projective de l'ouvert $\E$: un \emph{ellipsoïde}. Muni de sa distance de Hilbert, un ellipsoïde est isométrique à l'espace hyperbolique réel $\H^n$. Il s'agit du modèle projectif de l'espace hyperbolique, que l'on appele parfois modèle de Beltrami-Klein. En particulier, cet ouvert est \emph{homogène}, c'est-à-dire que le groupe $\Aut(\O)= \{ \g \in \ss \, \mid \, \g(\O) = \O\}$ agit transitivement sur $\O$. La figure \ref{pavage} montre un pavage par une tuile compacte et un pavage par une tuile non compacte mais de volume fini du modèle projectif de l'espace hyperbolique. 
}

\begin{figure}[!h]
\begin{center}
\includegraphics[trim=0cm 0cm 0cm 0cm, clip=true, width=6cm]{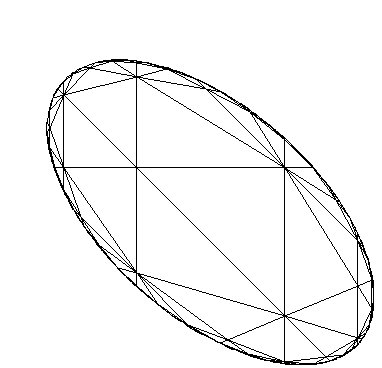}
\reflectbox{\includegraphics[trim=0cm 0cm 0cm 0cm, clip=true, width=6cm]{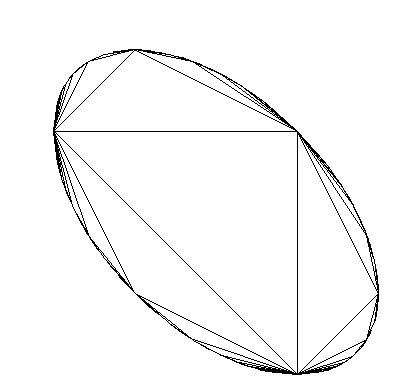}}
\caption{Modèle de Klein-Beltrami de l'Espace Hyperbolique Réel} \label{pavage}
\end{center}
\end{figure}


\par{
L'auteur s'est interessé dans sa thèse à la description des surfaces projectives convexe de volume fini (\cite{Marquis:2009sf,Marquis:2009kq}). Le but de cette article est de montrer le théorème suivant:
}
\begin{thm}
En toute dimension $n \geqslant 2$, il existe un couple $(\O_n,\G_n)$ où $\O_n$ est un ouvert proprement convexe strictement convexe de $\PP^n$ et $\G_n$ un sous-groupe discret de $\ss$ qui préserve $\O_n$ et tel que:
\begin{enumerate}
\item Le quotient $\O_n/_{\G_n}$ est de volume fini. 
\item Le quotient $\O_n/_{\G_n}$ n'est pas compact.
\item Le groupe $\G_n$ est d'indice fini dans le groupe $\Aut(\O)$. En particulier, l'ouvert proprement convexe $\O_n$ n'est pas homogène.
\end{enumerate}
De plus, l'ouvert $\O_n$ que l'on va construire sera Gromov-hyperbolique et le groupe $\G_n$ sera Zariski-dense dans $\ss$.
\end{thm}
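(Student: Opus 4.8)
The plan is to obtain $(\O_n,\G_n)$ by deforming a non-compact, finite-volume hyperbolic orbifold out of $\SO$ while keeping it convex projective. First I would fix, in each dimension $n\geqslant 2$, a finite-volume hyperbolic Coxeter polytope $P_n\subset\H^n$ possessing at least one ideal vertex, whose reflection group $W_n$ is a lattice in $\SO$; I would moreover arrange that the link of every ideal vertex is a \emph{rigid} Euclidean Coxeter polytope, so that the cusp subgroups admit no nontrivial convex projective deformation. Producing such a family of polytopes in \emph{every} dimension — presumably by a recursive construction raising $P_{n-1}$ to $P_n$, since finite-volume hyperbolic Coxeter simplices run out in high dimension — is part of the work. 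By Vinberg's theory, $P_n$ together with a Cartan matrix $A=(a_{ij})$ compatible with its Coxeter diagram (fixed diagonal, and $a_{ij}a_{ji}=4\cos^2(\pi/m_{ij})$ off-diagonal) determines a representation of $W_n$ into $\ss$ as a projective reflection group preserving a properly convex open set $\O_A$, the symmetric matrix recovering the hyperbolic structure inside the ellipsoid $\E$.

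Next I would construct the deformation. The admissible Cartan matrices, taken modulo conjugation by diagonal matrices, parametrize the convex projective structures on this Coxeter orbifold, and I would show this space is positive-dimensional by exhibiting an explicit one-parameter family $A_t$ with $A_0$ symmetric, \emph{supported away from the ideal vertices}, so that the subgroup attached to each cusp stays conjugate to its hyperbolic (parabolic) value. Vinberg's discreteness criterion ($a_{ij}\leqslant 0$ for $i\neq j$, together with $a_{ij}=0\Leftrightarrow a_{ji}=0$) then guarantees that $\G_t:=W_{A_t}$ is a discrete subgroup of $\ss$ preserving a properly convex open set $\O_t$ with $\O_0=\E$. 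Applying Selberg's lemma I would pass to a torsion-free finite-index subgroup, obtaining genuine manifolds while preserving finite covolume.

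The heart of the matter, and the step I expect to be hardest, is to check that the deformed object still has finite volume while becoming genuinely non-hyperbolic. Two things must be controlled simultaneously. (i) Convexity and regularity: following the analysis of convex sets divided or quasi-divided by Coxeter groups, strict convexity of $\O_t$ is governed by the diagram, where affine subdiagrams force either segments in $\partial\O_t$ or cusps. I would arrange that the only affine subdiagrams are those attached to the ideal vertices, which produce $C^1$ parabolic fixed points (the cusps) rather than segments, so that $\partial\O_t$ contains no nontrivial segment; by Benoist's characterization this is equivalent to strict convexity and to Gromov-hyperbolicity of the Hilbert metric $(\O_n,d_{\O_n})$. (ii) Volume: since the deformation is trivial on each cusp subgroup, a neighborhood of each cusp in $\O_t/_{\G_t}$ is projectively a genuine hyperbolic cusp region sitting inside $\O_t$, hence of finite Busemann volume by the classical computation, while the thick part is compact as a small deformation of the compact thick part of the hyperbolic orbifold. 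The delicate point is guaranteeing, uniformly along the family, that $A_t$ can be at once nontrivial and rel-cusps, and that proper convexity never degenerates.

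Finally I would collect the algebraic consequences for a fixed small $t\neq 0$, setting $\O_n:=\O_t$ and $\G_n:=\G_t$. Items $(1)$ and $(2)$ are the finite volume and non-compactness just obtained, the cusps persisting under deformation, and Gromov-hyperbolicity is exactly the strict convexity of $\O_n$. For Zariski-density: for $t\neq 0$ the representation leaves $\SO$, and since strict convexity forces strong irreducibility, the Zariski closure of $\G_n$ is a reductive irreducible subgroup of $\ss$; ruling out the proper irreducible possibilities — which, preserving a strictly convex set, would force $\O_n$ to be an ellipsoid — shows the closure is all of $\ss$. For item $(3)$ and non-homogeneity: a strictly convex $\O_n$ that is not an ellipsoid has discrete automorphism group, so $\Aut(\O_n)$ cannot act transitively, and since $\G_n\leqslant\Aut(\O_n)$ are both discrete with $\G_n$ of finite covolume, comparison of covolumes gives $[\Aut(\O_n):\G_n]<\infty$. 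This produces the required pair in every dimension $n\geqslant 2$.
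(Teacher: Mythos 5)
Your Vinberg-style strategy (projective reflection groups with a rel-cusp deformation of the Cartan matrix) is a genuinely different route from the paper's, and it is known to work for some low-dimensional examples; but as written it has an irreparable gap: it cannot produce examples in \emph{every} dimension $n \geqslant 2$. Your first step requires, for each $n$, a finite-volume hyperbolic Coxeter polytope in $\H^n$ with an ideal vertex, and such polytopes simply do not exist in high dimension: Vinberg proved there are no compact hyperbolic Coxeter polytopes for $n \geqslant 30$, Prokhorov and Khovanskii proved there are no finite-volume ones for $n \geqslant 996$, and the known noncompact examples stop around dimension $21$ (Borcherds). So the ``recursive construction raising $P_{n-1}$ to $P_n$'', which you yourself flag as ``part of the work'', is not a gap that can be filled --- the objects it is supposed to produce do not exist for large $n$. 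A second, subsidiary gap: even in the dimensions where polytopes exist, you assert without argument that the space of admissible Cartan matrices modulo diagonal conjugation is positive-dimensional rel cusps; hyperbolic Coxeter orbifolds can be projectively rigid, and exhibiting a nontrivial rel-cusp family is a real constraint on the combinatorics of the polytope, not an automatic consequence of having ideal vertices with rigid Euclidean links.

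The paper sidesteps both problems with a construction available in all dimensions: by Borel--Harish-Chandra, the form $x_1^2+\cdots+x_n^2-x_{n+1}^2$ over $\Q$ gives a non-uniform arithmetic lattice in $\SO$; Selberg's lemma makes it torsion-free; Bergeron's theorem yields a finite cover in which a totally geodesic hypersurface (coming from the sublattice preserving $\{x_1=0\}$) is \emph{embedded}; one then deforms by Johnson--Millson bending along this hypersurface. The analytic heart is a convexity theorem showing the bent developing map is still a homeomorphism onto a properly convex open set, and finite volume is preserved because each cusp stabilizer is shown to still preserve a pair of nested ellipsoids tangent at the fixed point, so Busemann volumes can be compared with hyperbolic ones near the cusps (your cusp-volume heuristic is the same in spirit, but in the paper it requires work, since bending \emph{does} modify the cusp groups meeting the hypersurface: the key step is proving the deformed cusp group still acts by affine isometries, via the eigenvalue-one lemma). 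Your final algebraic paragraph --- Zariski density by ruling out $\SO$ as Zariski closure, discreteness of $\Aut(\O_n)$, finite index by covolume comparison --- matches the paper's endgame and is essentially correct, but it rests on a foundation that collapses for large $n$.
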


\begin{rema}
Yves Benoist a montré dans \cite{MR2094116} que tout ouvert proprement convexe de $\PP^n$ Gromov-hyperbolique est strictement convexe. Karlsson et Noskov ont montré dans \cite{MR1923418} que le bord $\partial \O$ de tout ouvert proprement convexe $\O$ de $\PP^n$ Gromov-hyperbolique est $\C^1$. Yves Benoist a montré dans \cite{MR2010741} des propriétés de régularités encore plus fortes mais plus complexes a énoncé.
\end{rema}

\par{
Avant de faire quelques rappels historiques nous allons rappeler les definitions précises de tous ces objets.
}
\subsection{Géométrie de Hilbert}\label{geo_hilbert}
\par{
Cette partie constitue une introduction très rapide à la géométrie de
Hilbert. Pour une introduction beaucoup plus complète à la géometrie de Hilbert on pourra lire \cite{MR2270228}.
}

\subsubsection{Convexité}
\begin{defi}
Une \emph{carte affine} $A$ de $\PP^n$ est le complémentaire d'un hyperplan projectif. Une carte affine possède une structure naturelle d'espace affine. Un ouvert $\O$ de $\PP^n$ différent de $\PP^n$ est \emph{convexe} lorsqu'il est inclus dans une carte affine et qu'il est convexe dans cette carte. Un ouvert convexe $\O$ de $\PP^n$ est dit \emph{proprement convexe} lorsqu'il existe une carte affine contenant son adhérence $\overline{\O}$. Autrement dit, un ouvert convexe est proprement convexe lorsqu'il ne contient pas de droite affine. Un ouvert convexe $\O$ de $\PP^n$ est dit \emph{strictement convexe} lorsque son bord $\partial \O$ ne contient pas de segment non trivial. 
\end{defi}

\subsubsection{La métrique d'un ouvert proprement convexe}\label{base}
\par{
Soit $\O$ un ouvert proprement convexe de $\PP^n$, Hilbert a
introduit sur de tels ouverts une distance, la distance de
Hilbert, définie de la façon suivante:
}
\\
\par{
Soient $x \neq y \in \O$, on note $p,q$ les points d'intersection
de la droite $(xy)$ et du bord $\partial \O$ de $\O$ tels que $x$
est entre $p$ et $y$, et $y$ est entre $x$ et $q$ (voir figure \ref{dis}). On pose:
}
\begin{figure}[!h]
\begin{center}
\includegraphics[trim=0cm 12cm 0cm 0cm, clip=true, width=6cm]{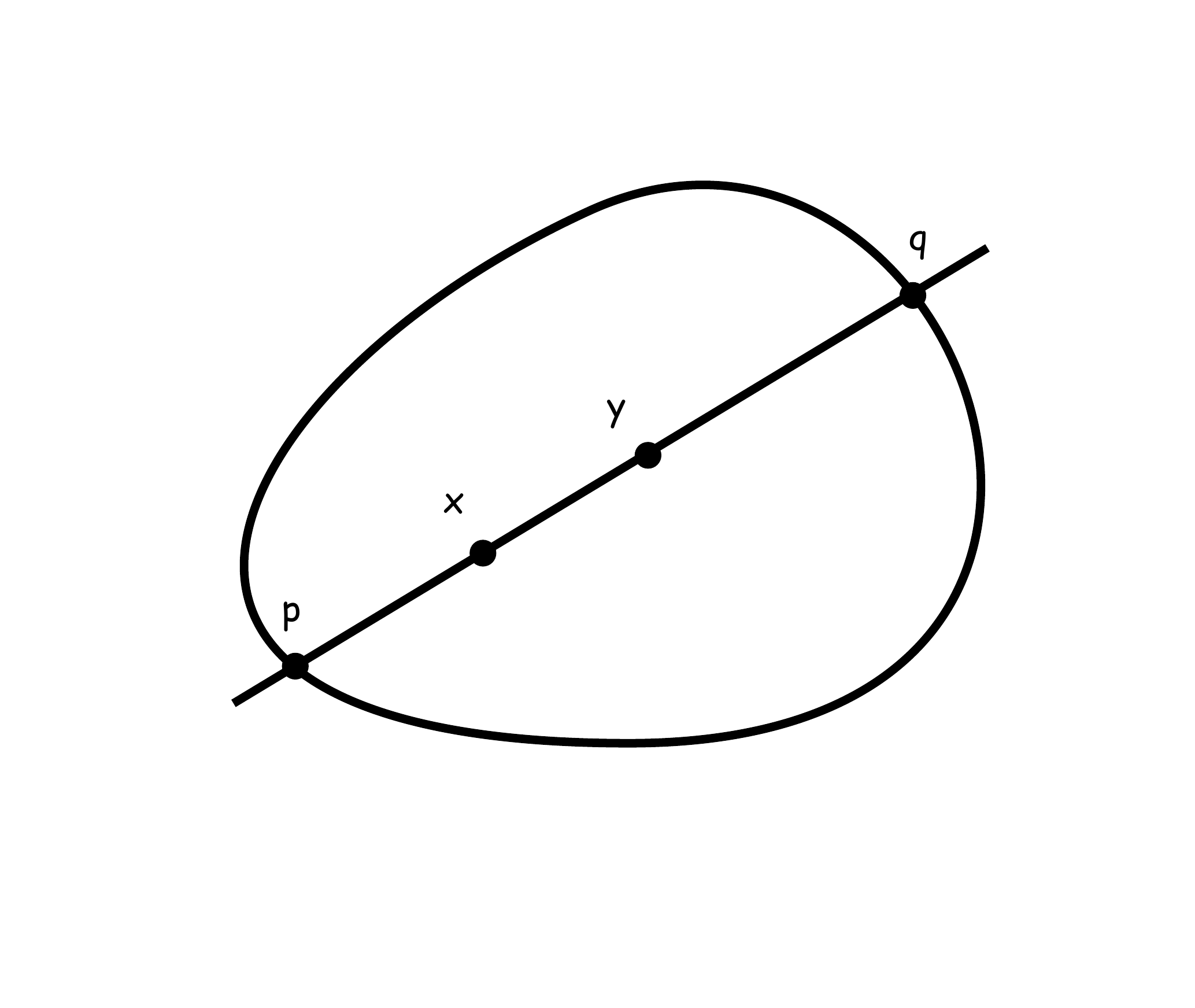}
\caption{La distance de Hilbert} \label{dis}
\end{center}
\end{figure}

$$
\begin{array}{ccc}
d_{\O}(x,y) = \frac{1}{2}\ln ([p:x:y:q]) = \frac{1}{2}\ln \Big(\frac{\|p-y \|\cdot \|
q-x\|}{\| p-x \| \cdot \| q-y \|} \Big) & \textrm{et} &
d_{\O}(x,x)=0
\end{array}
$$
\begin{enumerate}
\item $[p:x:y:q]$ désigne le birapport des points $p,x,y,q$.

\item $\| \cdot \|$ est une norme euclidienne quelconque sur une
carte affine $A$ qui contient l'adhérence $\overline{\O}$ de $\O$.
\end{enumerate}

\begin{rema}
Il est clair que $d_{\O}$ ne dépend ni du choix de $A$, ni du
choix de la norme euclidienne sur $A$.
\end{rema}

\begin{fait}
Soit $\O$ un ouvert proprement convexe de $\PP^n$,
\begin{enumerate}
\item $d_{\O}$ est une distance sur $\O$.

\item $(\O,d_{\O})$ est un espace métrique complet.

\item La topologie induite par $d_{\O}$ coïncide avec celle
induite par $\PP^n$.

\item Le groupe $\Aut(\O)$ des transformations projectives de $\ss$ qui préservent $\O$ est un sous-groupe fermé de $\ss$ qui agit par isométrie sur $(\O,d_{\O})$. Il agit donc proprement sur $\O$.
\end{enumerate}
\end{fait}

\subsubsection{La structure finslérienne d'un ouvert proprement convexe}

\par{
Soit $\O$ un ouvert proprement convexe de $\PP^n$, la métrique de
Hilbert $d_{\O}$ est induite par une structure finslérienne sur
l'ouvert $\O$. On identifie le fibré tangent $T \O$ de $\O$ à
$\O \times A$.
}
\\
\par{
Soient $x \in \O$ et $v \in A$, on note $p^+$ (resp. $p^-$) le
point d'intersection de la demi-droite définie par $x$ et $v$
(resp $-v$) avec $\partial \O$.
}
\\
\par{
On pose: $\|v\|_x = \frac{1}{2}\Big(\frac{1}{\|x-p^-\|} + \frac{1}{\| x-
p^+\|} \Big) \| v \|$.
}
\begin{figure}[!h]
\begin{center}
\includegraphics[trim=0cm 12cm 0cm 0cm, clip=true, width=6cm]{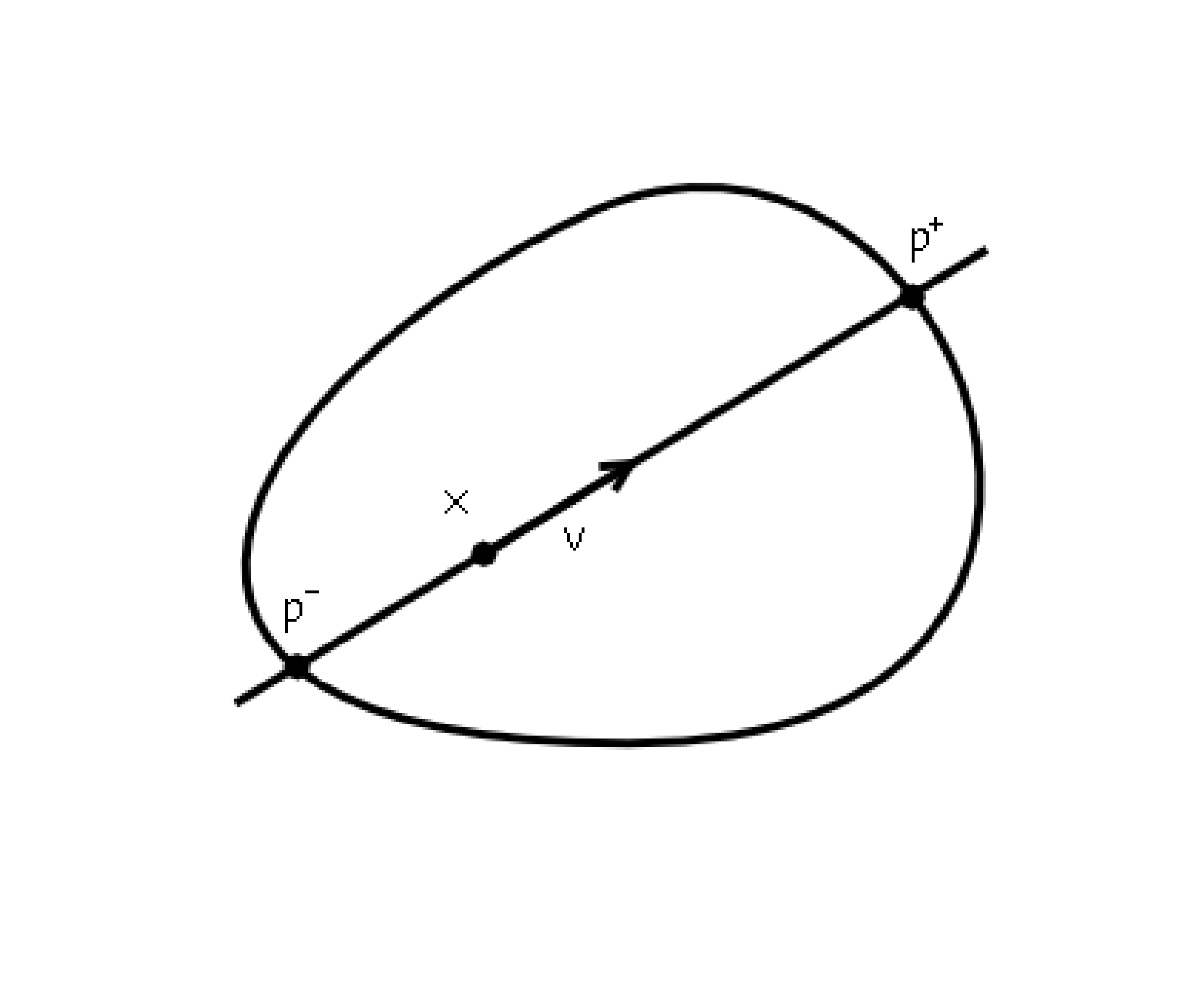}
\caption{La métrique de Hilbert} \label{met}
\end{center}
\end{figure}

\begin{fait}
Soient $\O$ un ouvert proprement convexe de $\PP^n$ et $A$ une carte
affine qui contient $\overline{\O}$,
\begin{enumerate}
\item la distance induite par la métrique finslérienne $\|\cdot \|
_{\cdot}$ est la distance $d_{\O}$.

\item Autrement dit on a les formules suivantes:
\begin{itemize}
\item $\|v\|_x = \frac{d}{dt}|_{t=0} d_{\O}(x,x+tv)$, où $v \in
A$, $t\in \R$ assez petit.

\item $d_{\O}(x,y) = \inf \int_0^1 \| \sigma'(t) \|_{\sigma(t)}
dt$, où $l'\inf$ est pris sur les chemins $\sigma$ de classe
$\C^1$ tel que $\sigma(0)=x$ et $\sigma(1)=y$.
\end{itemize}
\end{enumerate}
\end{fait}

\begin{rema}
La quantité $\|v\|_x$ est donc indépendante du choix de $A$ et de $\| \cdot
\|$.
\end{rema}

\subsubsection{Mesure sur un ouvert proprement convexe}

\par{
Nous allons construire une mesure borélienne $\mu_{\O}$ sur $\O$,
de la même façon que l'on construit une mesure borélienne sur une
variété riemanienne.
}

Soit $\O$ un ouvert proprement convexe de $\PP^n$, on note:
\begin{itemize}
\item $B_x(1) = \{ v \in T_x \O \, | \, \|v\|_x < 1 \}$

\item $\textrm{Vol}$ est la mesure de Lebesgue sur $A$ normalisée
pour avoir $\textrm{Vol}(\{ v \in A \, | \, \|v\| < 1 \})=1$.
\end{itemize}

\par{
On peut à présent définir la mesure $\mu_{\O}$. Pour tout borélien
$\mathcal{A} \subset \O \subset A$, on pose:

$$\mu_{\O} (\mathcal{A})= \int_{\mathcal{A}} \frac{dVol(x)}{\textrm{Vol}(B_x(1))}$$

La mesure $\mu_{\O}$ est indépendante du choix de $A$ et de $\| \cdot \|$,
car c'est la mesure de Haussdorff de $(\O,d_{\O})$.
}

\begin{defi}
Le quotient d'un ouvert proprement convexe $\O$ par un sous-groupe discret $\G$ hérite d'une mesure via la mesure de Busemann de $\O$. On dira que $\Quo$ est de \emph{volume fini} lorsqu'il est de volume fini pour cette mesure.
\end{defi}

\subsection{Petit historique autour de la construction d'ouverts proprement convexes possédant "beaucoup de symétries"}

\subsubsection{Le cas homogène}
\par{
Une première définition d'ouverts proprement convexes possédant "beaucoup de symétries" est un ouvert proprement convexe \emph{homogène}, c'est à dire tel que le groupe $\Aut(\O)$ agit transitivement sur $\O$. Koecher et Vinberg ont classifié ces ouverts dans les années 50-60 dans les deux articles suivants: \cite{MR0201575,MR0158414}.
}
\\
\par{
 La liste de ces ouverts est assez longue et ne nous intéresse pas car un ouvert proprement convexe homogène $\O$ possède un sous-groupe discret $\G \subset \Aut(\O)$ tel que le quotient $\Quo$ est de volume fini si et seulement si le groupe $\Aut(\O)$ est unimodulaire si et seulement si l'ouvert $\O$ est un espace symétrique (i.e il existe une symétrie centrale centrée en n'importe quel point).
}
\\
\par{
Les ouverts proprement convexes qui nous intéressent sont sont ceux des deux définitions suivantes:
}

\begin{defi}
Un ouvert proprement convexe $\O$ de $\PP^n$ est dit \emph{divisible} lorsqu'il existe un sous-groupe discret $\G$ de $\ss$ tel que $\G \subset \Aut(\O)$ et $\Quo$ est compact.
\end{defi}

\begin{defi}
Un ouvert proprement convexe $\O$ de $\PP^n$ est dit \emph{quasi-divisible} lorsqu'il existe un sous-groupe discret $\G$ de $\ss$ tel que $\G \subset \Aut(\O)$ et $\Quo$ est de volume fini.
\end{defi}

\par{
Les ellipsoïdes forment (avec $n \geqslant 1$) sont les seules
convexes divisibles (resp. quasi-divisibles) homogènes et strictement convexes.
}
\\
\par{
Il existe des convexes divisibles (resp. quasi-divisibles) homogènes et non strictement convexe. Voici la liste des irréductibles avec $n \geqslant 2$:
\begin{enumerate}
\item $\Pi_n(\R)=\pi($ \{ Les matrices réelles $(n+1) \times (n+1)$ symétriques définies
positives) \}, il est de dimension $m=\frac{(n-1)(n+2)}{2}$ et son
groupe d'automorphisme est $\ss$.

\item $\Pi_n(\mathbb{C}) =\pi($ \{Les matrices complexes $(n+1) \times (n+1)$
hermitiennes définies $\,\,\,\,\,\,\,\,\,\,\,$ positives \}), il est de dimension $m=n^2-1$
et son groupe d'automorphisme est $\mathrm{SL}_{n+1}(\mathbb{C})$.

\item $\Pi_n(\mathbb{H}) =\pi($ \{ Les matrices quarternioniques $(n+1) \times (n+1)$
hermitiennes définies positives \}), il est de dimension
$m=(2n+1)(n-1)$ et  son groupe d'automorphisme est
$\mathrm{SL}_{n+1}(\mathbb{H})$.

\item $\Pi_3(\mathbb{O})$ un convexe "exceptionnel" de dimension
26 et tel que
Lie($\Aut(\Pi_3(\mathbb{O})))=\mathfrak{e}_{6(-26)}$.
\end{enumerate}
}
\vspace{0.5cm}

\subsubsection{Le cas non-homogène}

\par{
Kac et Vinberg ont construit les premiers exemples de convexe divisible non-homogène dans \cite{MR0208470} à l'aide de groupe de Coxeter. Les résultats joints de Johnson et Millson (\cite{MR900823}), Koszul (\cite{MR0239529}) et Benoist (\cite{MR2094116}) montrent qu'en toute dimension $n \geqslant 2$, il existe des convexes divisibles non homogènes et strictement convexes.
}
\\
\par{
 Kapovich et Benoist ont construit en toute
dimension $n \geqslant 4$ (Benoist pour $n=4$ dans \cite{MR2295544} et
Kapovich pour $n \geqslant 4$ dans \cite{MR2350468}) des convexes divisibles non homogènes, strictement convexes et non quasi-isométriques à l'espace hyperbolique $\mathbb{H}^n$.
}
\\
\par{
Dans \cite{Marquis:2009kq}, l'auteur a montré que tout convexe quasi-divisible de dimension 2 est strictement convexe. La généralisation de ce résultat en dimension supérieure est fausse. En effet, Yves Benoist a construit des exemples de convexe divisible irréductible, non homogène et non strictement convexe en dimension 3, 4, 5 et 6 (\cite{MR2218481}). Cette famille de convexe divisible est la plus difficile à construire. Les constructions de l'article \cite{Marquis:2008qf} devrait permettre de construire des exemples de convexe quasi-divisible irréductible, non homogène et non strictement convexe en dimension 3.
}

\subsection{Espaces des modules}
\par{
On rappelle dans ce paragraphe les définitions de structure projective, d'espaces des modules de structures projectives, etc... pour éviter les ambiguïtés.
}

\begin{defi}
Une \emph{structure projective réelle} sur une variété $M$ est la donnée d'un atlas maximal $\varphi_{\mathcal{U}}:\mathcal{U} \rightarrow \PP^n$ sur $M$ tel que les fonctions de transitions $\varphi_{\mathcal{U}} \circ \varphi_{\mathcal{V}}^{-1}$ sont des éléments de $\ss$, pour tous
ouverts $\mathcal{U}$ et $\mathcal{V}$ de l'atlas de $S$ tel que $\mathcal{U} \cap \mathcal{V} \neq \varnothing$.
\end{defi}

\begin{rema}
Pour simplifier la rédaction on dira \og structure projective\fg $\,$ à la place de \og structure projective réelle \fg.
\end{rema}

\begin{defi}
Un \emph{isomorphisme} entre deux variétés munies de structures projectives est un homéomorphisme qui, lu dans les cartes, est donné par des éléments de $\ss$.
\end{defi}

\noindent La donnée d'une structure projective sur une variété $M$ est équivalente à la donnée:
 \begin{enumerate}
 \item d'un homéomorphisme local $\dev:\widetilde{M}\rightarrow \PP^n$ appelée \emph{développante}, où $\widetilde{M}$ est le revêtement universel de $M$,

 \item d'une représentation $\hol:\pi_1(M) \rightarrow \ss$ appelée \emph{holonomie} tel que la développante est $\pi_1(M)$-équivariante ( i.e pour tout $x \in \widetilde{M}$, et pour tout $\g \in \pi_1(M)$ on a $\dev(\g \, x) = \hol(\g) \dev(x)$).
\end{enumerate}
De plus, deux structures projectives données par les couples $(\dev,\hol)$ et $(\dev',\hol')$ sont isomorphes si et seulement s'il existe un élément $g \in \ss$ tel que $\dev' = g \circ \dev$ et $\hol' = g \circ \hol \circ g^{-1}$.

\begin{defi}
Soit $M$ une variété, une \emph{structure projective marquée sur $M$} est la donnée d'un homéomorphisme $\varphi:M \rightarrow \M$ où
$\M$ est une variété projective. On note $\PP'(M)$ \emph{l'ensemble des structures projectives marquées
sur $M$}.

Deux structures projectives marquées sur $M$, $\varphi_1:M \rightarrow \M_1$ et $\varphi_2:M \rightarrow \M_2$ sont dites
\emph{isotopiques} lorsqu'il existe un isomorphisme $h:\M_1 \rightarrow \M_2$ tel que $\varphi_2^{-1} \circ h \circ \varphi_1 : M
\rightarrow M$ est un homéomorphisme \underline{isotope à l'identité}. On note $\PP(M)$ \emph{l'ensemble des structures projectives marquées sur $M$ modulo isotopie}.
\end{defi}

On peut à présent définir une topologie sur l'ensemble des structures projectives marquées sur la variété $M$. On introduit l'espace:

$$
\mathcal{D'}(M)=
\left\{
\begin{array}{l|l}
          & \dev:\widetilde{\M}\rightarrow \PP^n \textrm{ est un homéomorphisme local}\\
(\dev,\hol) & \hol:\pi_1(M) \rightarrow \ss\\
          & \dev \textrm{ est } \pi_1(M)-\textrm{équivariante}
\end{array}
\right\}
$$

\par{
Les espaces $\widetilde{\M}$, $\PP^n$, $\pi_1(M)$ et $\ss$ sont des espaces topologiques localement compacts. On munit l'ensemble des applications continues entre deux espaces localement compacts de la topologie compact-ouvert. Ainsi, l'espace $\mathcal{D}'(M)$ est munie d'une topologie. Le groupe $Homeo_0(M)$ des homéomorphismes isotopes à l'identité agit naturellement sur $\mathcal{D}'(M)$. Le groupe $\ss$ agit aussi naturellement sur $\mathcal{D}'(M)$. Ces deux actions commutent. L'espace quotient est l'espace $\PP(M)$ des structures projectives marquées sur $M$ à isotopie près. On le munit de la topologie quotient.
}
\\
\par{
On ne s'intéresse qu'à un certain type de structure projective: les structures projectives proprement convexes.
}
\begin{defi}
Soit $M$ une variété, une structure projective sur $M$ est dite
\emph{proprement convexe} lorsque la développante est un
homéomorphisme sur un ouvert $\Omega$ proprement convexe de $\PP^n$. On note
$\beta(M)$ l'ensemble des structures projectives proprement
convexes sur $M$ modulo isotopie.
\end{defi}

\par{
Soit $\M$ une variété projective proprement convexe, l'application développante permet d'identifier le revêtement universel $\widetilde{\M}$ de $\M$ à un ouvert $\O$ proprement convexe de $\PP^n$  qui est naturellement muni d'une mesure $\mu_{\O}$ invariante sous l'action du groupe fondamental $\pi_1(\M)$ de $\M$. On note $\pi:\O \rightarrow \M$ le revêtement universel de $\M$. Il existe une unique mesure $\mu_{\M}$ sur $\M$ telle que pour tout borélien $\mathcal{A}$ de $\O$, si $\pi:\O \rightarrow \M$ restreinte à $\mathcal{A}$ est injective alors $\mu_{\M}(\pi(\mathcal{A}))=\mu_{\O}(\mathcal{A})$.
}

\begin{defi}
Soit $M$ une variété, on dit qu'une structure projective 
proprement convexe $\M$ sur $M$ est de \emph{volume fini} lorsqu'on a
$\mu_{\M}(\M) < \infty$. On note $\beta_f(M)$ \emph{l'espace des modules des structures projectives marquées proprement convexes de volume fini sur $M$}.
\end{defi}

\subsection{Plan}

\par{
Nous allons à présent expliquer la structure de cet article. Dans la partie \ref{constr_depa}, nous rappelons rapidement comment construire une variété hyperbolique qui possède une hypersurface totalement géodésique.
}
\\
\par{
Dans la partie \ref{sec_pliage}, nous rappelons comment plier une variété hyperbolique $\M$ le long d'une hypersurface totalement géodésique $\Nn$. Le pliage est une déformation $\rho_t: \pi_1(M) \rightarrow \ss$ non triviale de la représentation $\rho_0: \pi_1(M) \rightarrow \SO$ du groupe fondamental de la variété hyperbolique $\M$. Ce type de déformation a été utilisée par Johnson et Millson dans \cite{MR900823}, sous le nom de "bending". Le pliage nous permet d'obtenir une nouvelle structure projective sur $M$ qui n'est plus hyperbolique. La première difficulté consiste à montrer que cette nouvelle structure est encore PROPREMENT CONVEXE. Il faut montrer qu'il existe un ouvert $\O_t$ préservé par $\rho_t$. Pour cela, nous utiliserons un théorème de convexité qui sera présenté et démontré dans la partie \ref{sec_conv}.
}
\\
\par{
Dans la partie \ref{sec_zar}, nous montrerons que les groupes $\rho_t(\pi_1(M))$ ainsi construit sont irréductibles, Zariski-dense dans $\ss$ (pour $t \neq 0$) et que leur action sur le bord de $\O_t$ est minimale. On en profitera pour montrer que le pliage est bien une déformation non triviale, que les groupes obtenus ne sont pas des réseaux de $\ss$ et que les ouverts proprement convexes obtenus ne sont pas homogènes.
}
\\
\par{
Dans la partie \ref{sec_volfini}, nous montrons que l'action de $\rho_t(\pi_1(M))$ sur $\O_t$ est de covolume fini. Ceci nous fournira un argument pour montrer que les groupes obtenus ne sont pas des groupes de Schottky.
}
\\
\par{
Enfin dans la partie \ref{sec_gro}, nous montrerons que l'ouvert $\O_t$ est strictement convexe et même Gromov-hyperbolique.
}
\\
\par{
L'auteur tient à remercier vivement les laboratoires de l'UMPA à Lyon et de TIFR à Mumbaï pour les extraordinaires conditions de travail qu'ils offrent. Plus particulièrement, l'auteur remercie Venkataramana, Yves Benoist, Nicolas Bergeron et Constantin Vernicos pour leurs aides à distance ou en "live". Et plus particulièrement, Mickaël Crampon pour son aide pour les démonstrations de la partie \ref{sec_gro}.
}


\section{Construction de variétés hyperboliques possédant une hypersurface totalement géodésiques}\label{constr_depa}

La proposition suivante est très classique. Elle sert par exemple de point de départ pour construire des variétés hyperboliques non arithmétiques (\cite{MR932135}), ou avec un premier nombre de Betti arbitrairement grand (Voir par exemple \cite{MR1769939}).

\begin{prop}\label{exisreseau}
En toute dimension $n \geqslant 2$, il existe une variété hyperbolique de volume fini non compacte $M_n$ et  une variété hyperbolique compacte $M'_n$ qui possède une hypersurface totalement géodésique de volume fini (pour la métrique hyperbolique induite).
\end{prop}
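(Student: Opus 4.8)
The plan is to realize both $M_n$ and $M'_n$ as quotients of $\H^n$ by torsion-free arithmetic lattices coming from integral quadratic forms, and to obtain the totally geodesic hypersurface as a component of the fixed locus of a reflection that is already built into the form. I would run the two cases in parallel: the only difference is the ground field, a form isotropic over $\Q$ for the non-compact $M_n$, and a form anisotropic over a real quadratic field for the compact $M'_n$.

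For $M_n$, take $q = x_1^2 + \dots + x_n^2 - x_{n+1}^2$ on $\R^{n+1}$, of signature $(n,1)$, so that $\mathrm{SO}^+(q,\R) \cong \mathrm{Isom}^+(\H^n)$. By the theorem of Borel--Harish-Chandra, $\G := \mathrm{SO}^+(q,\Z)$ is a lattice, and it is non-cocompact because $q$ is isotropic over $\Q$ (Godement's compactness criterion). For $M'_n$, set $K = \Q(\sqrt{2})$ with ring of integers $\Z[\sqrt{2}]$ and take $q = x_1^2 + \dots + x_n^2 - \sqrt{2}\,x_{n+1}^2$. At the identity real place $q$ still has signature $(n,1)$, while at the conjugate place $\sqrt{2}\mapsto -\sqrt{2}$ it becomes positive definite; this forces $q$ to be anisotropic over $K$ (a nonzero $K$-rational isotropic vector would stay isotropic for the definite conjugate form, which is absurd). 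Hence $\G' := \mathrm{SO}^+(q,\Z[\sqrt{2}])$ projects to a cocompact lattice in $\mathrm{Isom}^+(\H^n)$, the conjugate factor $\mathrm{SO}_{n+1}(\R)$ being compact.

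In either case the reflection $\sigma\colon (x_1,\dots,x_{n-1},x_n,x_{n+1}) \mapsto (x_1,\dots,x_{n-1},-x_n,x_{n+1})$ belongs to the integral orthogonal group, has order two, and its fixed hyperplane $H = \{x_n = 0\}$ is a totally geodesic copy of $\H^{n-1}$. The restriction $q|_H$ is again isotropic over $\Q$ (resp. anisotropic over $K$), so $\Stab(H)$ meets the lattice in a finite-covolume lattice of $\mathrm{SO}_{n-1,1}(\R)$; this is exactly what will give the hypersurface finite volume (resp. make it compact). To replace orbifolds by manifolds I would not merely invoke \emph{Selberg's lemma} but choose a principal congruence subgroup of level a suitable ideal $\mathfrak{a}$ prime to $2$: by Minkowski's argument it is torsion-free, and being normal in the full integral group it is in particular normalized by $\sigma$. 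Writing $M_n = \H^n/\G_0$ and $M'_n = \H^n/\G'_0$ for such torsion-free finite-index subgroups $\G_0$ of $\G$ and $\G'_0$ of $\G'$ produces the two hyperbolic manifolds.

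The one genuinely delicate point---and the step I would treat most carefully---is that this hypersurface be \emph{embedded} and two-sided, not merely immersed. This is where the construction pays off: since $\G_0$ is normalized by $\sigma$, the involution $\sigma$ descends to an isometric involution of $M_n$ (resp. $M'_n$) that fixes the image of $H$ pointwise. The fixed-point set of an isometric involution is a disjoint union of \emph{embedded} totally geodesic submanifolds; the image of $H$ is a connected component of this set of codimension one (because $\sigma$ reverses the normal direction to $H$), hence embedded, and two-sided for the same reason. Its induced metric is hyperbolic and, by the description of $\Stab(H)$ above, of finite volume, and compact in the case of $M'_n$. The remaining verifications---that the relevant arithmetic groups really are lattices, that congruence subgroups are torsion-free, and that $\Stab(H)$ meets $\G_0$ in a genuine lattice rather than something thinner---are the standard facts that make the statement classical.
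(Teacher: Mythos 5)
Your proof is correct, and it reaches the embedded hypersurface by a genuinely different route than the paper. The arithmetic input is the same: both you and the paper start from $q=x_1^2+\dots+x_n^2-x_{n+1}^2$ and Borel--Harish-Chandra to get a non-uniform lattice, and both pass to a torsion-free finite-index subgroup (the paper via Selberg's lemma, you via a Minkowski-type congruence argument --- you even treat the cocompact case over $\Q(\sqrt{2})$ explicitly, which the paper leaves as implicit, only sketching the non-compact case). The divergence is at the crucial step, embeddedness. The paper takes the proper totally geodesic immersion $H/_{\Delta_2}\rightarrow \H^n/_{\Lambda_2}$ and invokes Bergeron's lifting theorem (its théorème \ref{relevement}, a separability-type result) to find a finite cover in which the immersion lifts to an embedding. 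You instead exploit the fact that $H$ is the fixed locus of an \emph{integral} reflection $\sigma$: choosing the torsion-free subgroup to be a principal congruence subgroup, hence normalized by $\sigma$, you descend $\sigma$ to an isometric involution of the quotient and identify the image of $H$ with a component of its fixed-point set, which is automatically an embedded totally geodesic hypersurface. This is Millson's classical trick; it buys you a self-contained proof that avoids Bergeron's theorem entirely, at the cost of working only when the hypersurface comes from a reflection in the integral group --- Bergeron's route handles an arbitrary proper totally geodesic immersion, which is why the paper states it as a black box. Two small points in your version deserve the care you partially flag as ``standard'': first, that $\bar{\sigma}$ is not the identity on the quotient (immediate from torsion-freeness, since $\sigma$ has order two); second, that the image of $H$ is a \emph{whole} component of the fixed set --- openness follows from equality of dimensions, and closedness needs either geodesic completeness of the projected hypersurface or the local finiteness of the translates $\gamma H$, the latter resting on $\mathrm{Stab}_{\G_0}(H)$ being a lattice in the stabilizer, which is the same properness ingredient the paper uses to call its immersion proper and which Borel--Harish-Chandra applied to the rational form $q'$ supplies in both treatments.
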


L'objet de cette partie est de rappeller les grandes lignes de la démonstration de cette proposition dans le cas non compacte.

\subsection{Sous-variété totalement géodésique immergée et plongée des variétés hyperboliques}

\par{
Nous aurons besoin du théorème suivant. Ce théorème a une longue histoire et de nombreux auteurs, on pourra trouver une preuve de la version qui nous intéresse dans l'article de \cite{MR1769939} (Théorème $1'$) de Nicolas Bergeron. Ce théorème nous dit que si l'on peut immerger de façon totalement géodésique une variété hyperbolique "proprement" alors on peut la plonger quitte à passer un revêtement fini. Voici l'énoncé précis.
}

\begin{theo}\label{relevement}
Soit $M$ une vari\'et\'e hyperbolique de volume fini de dimension $n$ et $N$ une vari\'et\'e hyperbolique de dimension $n-1$ de volume fini. On suppose qu'il existe une immersion propre totalement g\'eod\'esique $\varphi$ de $N$ dans $M$ alors il existe un rev\^etement fini $\widehat{M}$ de $M$ et un rev\^etement fini $\widehat{N}$ de $N$ tel que le relev\'e $\widehat{\varphi}:\widehat{N} \rightarrow \widehat{M}$ de $\varphi: N \rightarrow M$ soit un plongement totalement g\'eod\'esique.
\end{theo}

\subsection{Construction de la variété hyperbolique $M_n$}

\par{
Dans ce paragraphe, on construit la variété $M_n$ de la proposition \ref{exisreseau}.
Pour construire une telle variété, nous allons utiliser le théorème de Borel et Harish-Chandra 
qui permet de construire des réseaux arithmétiques uniforme et non uniforme dans les groupes de Lie semi-simple. On donne ici une version très simplifiée de ce théorème dans le cas où le groupe de Lie est $\SO$ et le réseau est non uniforme.
}

\begin{theo}[Borel et Harish-Chandra]\label{Borel}
Soit $q$ une forme quadratique sur $\R^{n+1}$ à coefficients dans $\Q$, avec $n \geqslant 2$. On suppose que:
\begin{enumerate}
\item La forme quadratique $q$ représente $0$ sur $\Q$.
\item La forme quadratique $q$ est de signature $(n,1)$.
\end{enumerate}
Alors, le groupe  $\mathrm{SL}_{n+1}(\Z) \cap  \mathrm{SO}(q)$ est un r\'eseau non-uniforme de $\mathrm{SO}(q)$.
\end{theo}

Nous aurons aussi besoin du lemme de Selberg.

\begin{theo}[Selberg]\label{selberg}
Tout sous-groupe de type fini de $\mathrm{GL}_{n}(\mathbb{C})$ est virtuellement sans torsion.
\end{theo}

\par{
Rappellons qu'un groupe $\G$ est virtuellement sans torsion s'il contient un sous-groupe d'indice fini $\G'$ qui est sans torsion, (c'est à dire que tous les éléments de $\G'$ sont d'ordre infini).
}
\\
\par{
On considère la forme quadratique $q(x_1,....,x_{n+1})=x_1^2+...+x_n^2-x_{n+1}^2$, elles vérifient les hypothèses du théorème \ref{Borel}. Par conséquent, le groupe $\Lambda_1 =\mathrm{SL}_{n+1}(\Z) \cap  \mathrm{SO}(q)$ est un réseau non-uniforme de $\SO$. Le lemme de Selberg montre qu'il existe un sous-groupe d'indice fini $\Lambda_2 \subset \Lambda_1$ sans torsion. 
}
\\
\par{
Le groupe $\Delta_2$ des éléments de $\Lambda_2$ qui préservent l'hyperplan $H=\{ x_1=0 \}$ est un sous-groupe d'indice fini du groupe $\mathrm{SL}_{n}(\Z) \cap  \mathrm{SO}(q')$, où $q'(x_2,....,x_{n+1})=x_2^2+...+x_n^2-x_{n+1}^2$ , c'est donc un réseau de $\mathrm{SO}(q')$ (uniforme si $n=2$ et non uniforme si $n\geqslant 3$).
}
\\
\par{
L'application naturelle $\varphi$ de $H/_{\Delta_2}$ vers $\H^n/_{\Lambda_2}$ est une immersion totalement géodésique propre. Le théorème \ref{relevement} montre qu'il existe un sous-groupe $\Delta_0$ (resp. $\Lambda_0$) d'indice fini de $\Delta_2$ (resp. $\Lambda_2$) tel que le relèvement associé de $\varphi$ est un plongement.
}
\\
\par{
Par conséquent, la variété $M_n = \H^n/_{\Lambda_0}$ est une variété hyperbolique de volume fini qui possède une hypersurface $\Nn_0=H/_{\Delta_0}$ totalement géodésique de volume fini. 
}

\begin{nota}
Tout au long de ce texte, le symbole $\Lambda_0$ (resp.  $M_n$) désignera le groupe $\Lambda_0$  (resp. la \underline{variété topologique} $M_n$) que l'on vient de construire. On désignera par le symbole $\M_0$ la structure hyperbolique que l'on vient de construire sur $M_n$. On désignera par le symbole $\Nn_0$ l'hypersurface totalement géodésique de $\M_0$ que l'on vient de construire.
\end{nota}

\section{Pliage}\label{sec_pliage}

Nous allons à présent contruire une famille continue de structures projectives sur la variété topologique $M_n$.

\subsection{Présentation}

\begin{defi}
Soit $M$ une variété. Soit $\mathcal{M}$ une structure projective sur $M$. Une \emph{déformation projective} de $\mathcal{M}$ est un chemin continu $c:\R  \rightarrow  \PP(M)$ tel que  $c(0) = \mathcal{M}$. Une déformation est dite \emph{triviale} lorsque le chemin $c$ est constant.
\end{defi}

Johnson et Millson ont montré le théorème suivant dans \cite{MR900823}.

\begin{theo}[Johnson-Millson]\label{johMill}
Soit $\M$ une variété hyperbolique. Si $\M$ possède une hypersurface totalement géodésique $\Nn$ alors il existe une déformation projective non triviale de $\M$.
\end{theo}

\par{
Comme nous allons utiliser la même déformation que celle introduite par Johnson et Millson. Nous allons dans le paragraphe \ref{defor_proj} qui suit reprendre la construction de cette déformation. Nous ne montrerons pas dans le paragraphe \ref{defor_proj} que cette déformation est effectivement non triviale. Nous le montrerons à l'aide du corollaire \ref{coro_zar}.
}
\\
\par{
Le théorème \ref{theo_kapo} montrera que la structure projective déformée est encore proprement convexe. Ceci entrainera en particulier que l'holonomie de la structure projective déformée est encore fidèle et discrète. Nous donnons une courte démonstration de ce résultat dans l'appendice \ref{sec_conv}. Dans la partie \ref{sec_volfini} nous montrons que la structure projective déformée est de volume fini.
}

\subsection{Déformation de structure projective}\label{defor_proj}

\par{
Soit $\M$ une variété hyperbolique de volume fini et $\Nn$ une hypersurface totalement géodésique de $\M$ de volume fini. On note $\dev_0: \widetilde{\M} \rightarrow \O \subset \PP^n$ et $\rho_0 : \pi_1(M) \rightarrow \ss$ un couple développante-holonomie qui définit la structure projective $\M$. On note $H$ une composante connexe de la préimage de $\Nn$ dans $\O$. On se donne $x_0$ un point de $H$. La variété $\M-\Nn$ possède une ou deux composantes connexes, nous allons distinguer ces cas dans les deux paragraphes suivants.
}

\subsubsection{Déformation dans le cas séparant}

\par{
On suppose que la variété $\M-\Nn$ possède deux composantes connexes $\M_g$ et $\M_d$.
}
\\
\par{
Le théorème de Van Kampen montre que le groupe fondamental de $\M$ peut s'écrire comme le produit amalgamé suivant:
}

$$
\begin{array}{cccc}
\pi_1(\M) = &   \pi_1(\M_g) & *                  &  \pi_1(\M_d)  \\
                  &    \multicolumn{3}{c}{\pi_1(\Nn)}    \\
\end{array}
$$

\par{
On cherche  à déformer la représentation $\rho_0$, pour cela on peut essayer de définir une nouvelle représentation de la façon suivante:
}
\par{
Soit $a \in \ss$, on pose:
}
$$
\begin{array}{ccclc}
\rho_a :  &  \pi_1(\M) & \rightarrow  & \ss &  \\ 
               &    \g         & \mapsto      & \left\{  \begin{array}{lc} 
                                                                            \rho_0(\g)                  & \textrm{si } \g \in \pi_1(\M_g) \\ 
                                                                            a \rho_0(\g)  a^{-1}    & \textrm{si } \g \in \pi_1(\M_d) \\
                                                                            \end{array}
                                                                            \right.
\end{array} 
$$

\par{
La représentation $\rho_a$ est bien définie si et seulement si $\forall \delta \in \pi_1(\Nn)$, on a $\rho_0(\delta)  =  a \rho_0(\delta)  a^{-1} $, autrement dit si et seulement si $a$ appartient au centralisateur de $\rho_0(\pi_1(\Nn))$ dans $\ss$. L'existence d'un tel élément est assurée par le lemme \ref{centra}.
}
\\
\par{
$\C_g$ et $\C_d$ les adhérences des deux composantes connexes de $\O\,\, - \underset{\g \in \pi_1(M)}{\bigcup} \g H$ (les $\g H$ sont disjoints puisque $\Nn$ est une hypersurface) qui bordent $H$. Le stabilisateur de $\C_g$ (resp. $\C_d$) dans $\pi_1(\M)$ est le groupe $\pi_1(\M_g)$ (resp. $\pi_1(\M_d)$).
}
\\
\par{
 La nouvelle développante est l'unique homéomorphisme local $\rho_a$-équivariant qui prolonge l'application $\dev_a:\C_g \cup \C_d \rightarrow \PP^n$ suivante:
}

\begin{enumerate}
\item Si $x \in \C_g$ alors on pose $\dev_a(x)= \dev_0(x)$.

\item Si $x \in \C_d$ alors on pose $\dev_a(x)= a \cdot \dev_0(x)$.

\item L'existence et l'unicité du prolongement de $\dev_a$ à $\O=\widetilde{\M}$ est évidente.
\end{enumerate}

\par{
Le théorème \ref{theo_kapo} montrera que $\dev_a$ est un homéomorphisme sur un ouvert proprement convexe de $\O$.
}

\subsubsection{Déformation de structure projective dans le cas non séparant}

\par{
On suppose que la variété $\M-\Nn$ est connexe.
}
\\
\par{
On note $\Mc$ la variété à bord obtenue en découpant $\M$ le long de $\Nn$, c'est à dire en ajoutant deux copies de $\Nn$ à $\M-\Nn$. La variété $\Mc$ possède deux bords $\Nn_g$ et $\Nn_d$. On choisit un point $x_0 \in \Nn$. On a une projection naturelle $p:\Mc \rightarrow \M$ qui est un homéomorphisme lorsqu'on la restreint à l'intérieur de $\Mc$. On se donne $\alpha$ un chemin de la variété $\Mc$ qui va du bord $\Nn_g$ au bord $\Nn_d$ dont la projection $p(\alpha)$ sur $\M$ est un lacet de $\M$ basé en $x_0$. Le théorème de Van Kampen montre que le groupe fondamental de $\M$ peut s'écrire comme la HNN-extension suivante:

$$
\pi_1(\M,x_0) = \pi_1(\Mc,x_g) *_{\alpha}
$$

On note $x_g$ (resp. $x_d$) le point de départ (resp. d'arrivée) de $\alpha$, c'est l'unique point de $\Nn_g$ (resp. $\Nn_d$) qui se projete sur $x_0$. Le fait que $\pi_1(\M) = \pi_1(\Mc) *_{\alpha}$ signifie que $\pi_1(\M)$ est le quotient du produit libre du groupe $\pi_1(\Mc)$ et du groupe engendré par le lacet $\alpha$ par la relation suivante:

$$\forall \g_g \in \pi_1(\Nn_g,x_g), \forall \g_d \in \pi_1(\Nn_d,x_d)  \textrm{ tel que } p^*(\g_g) = p^*(\g_d) \textrm{ alors } \g_g = \alpha^{-1} \g_d \alpha $$
 
On peut essayer de définir une nouvelle représentation de la façon suivante:
Soit $a \in \ss$, on pose:

$$
\begin{array}{cccl}
\rho_a :  &  \pi_1(\M,x_0) & \rightarrow  & \ss  \\ 
              &    \g                & \mapsto      &  \left\{
                                							\begin{array}{cl}
																\rho_0(\g)             & \textrm{si } \g \in \pi_1(\Mc,x_g)\\ 
																 a \rho_0(\alpha)   & \textrm{si } \g= \alpha \\ 
															\end{array}
															\right.
															\end{array}
$$
 
La représentation $\rho_a$ est bien définie si et seulement si:

$\forall \g_g \in \pi_1(\Nn_g,x_g), \forall \g_d \in \pi_1(\Nn_d,x_d)$ tel que $p^*(\g_g) = p^*(\g_d)$ on a:

$$ \rho_a(\g_g) = \rho_a(\alpha^{-1} \g_d \alpha)$$

Or,
\begin{enumerate}
\item $\rho_a(\g_g) = \rho_0(\g_g) = \rho_0(\alpha)^{-1} \rho_0(\g_d)  \rho_0(\alpha)$
\item $\rho_a(\alpha^{-1} \g_d \alpha) = \rho_a(\alpha^{-1}) \rho_a(\g_d) \rho_a(\alpha)= \rho_0(\alpha)^{-1} a^{-1} \rho_0(\g_d) a \rho_0(\alpha)$
\end{enumerate}

\par{
Autrement dit $\rho_a$ est bien définie si et seulement si $a$ appartient au centralisateur de $\rho_0(\pi_1(\Nn_d,x_d)) = \rho_0(\pi_1(\Nn,x_0))$ dans $\ss$. L'existence d'un tel élément est assurée par le lemme \ref{centra}.
}
\\
\par{
On note $\C$ une composante connexe de $\O - \underset{\g \in \pi_1(\M)}{\bigcup}\g H$. Le stabilisateur de $\C$ dans $\pi_1(\M)$ est le groupe $\pi_1(\Mc)$. 
}
\\
\par{
La nouvelle développante $\dev_a$ est l'unique homéomorphisme local $\rho_a$-équivariant qui prolonge l'application $\dev_a|_{\C}= \dev_0|_{\C}$.
}
\\
\par{
Le théorème \ref{theo_kapo} montrera que $\dev_a$ est un homéomorphisme sur un ouvert proprement convexe de $\O$.
}

\subsubsection{Centralisateur du groupe fondamental d'une hypersurface}

Le lemme suivant est élémentaire.

\begin{lemm}\label{centra}
Soit $\Delta$ un réseau de $\mathrm{SO}_{n-1,1}(\R)$, on considère la représentation $\rho$ de $\Delta$ obtenue à l'aide de l'injection qui préservent la première coordonnée de  $\mathrm{SO}_{n-1,1}(\R)$ dans $\SO$ puis de l'injection canonique dans $\ss$. La composante connexe du centralisateur de $\rho(\Delta)$ dans $\ss$ est le groupe de matrices diagonales suivant (pour $t>0$):
$$
a_t =
\left(
\begin{array}{llll}
 e^{nt}    & 0               & \cdots & 0 \\
0            &     e^{-t}   &  \ddots       &       \vdots  \\
\vdots    &  \ddots    &  \ddots &   0 \\
0             &     \cdots            &  0     &  e^{-t} 
\end{array}
\right)
$$
En particulier, la composante connexe du centralisateur de $\rho(\Delta)$ dans $\ss$ est isomorphe à $\R_+^*$.
\end{lemm}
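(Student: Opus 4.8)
The plan is to reduce the computation from the discrete group $\rho(\Delta)$ to the connected Lie group it generates, and then to solve the resulting centralizer problem by Schur's lemma. Recall that $\rho$ embeds $\Delta$ into the block subgroup $G=\{\mathrm{diag}(1)\oplus A : A\in \mathrm{SO}(q')\}$ of $\ss$, where $q'=q|_{H}$ has signature $(n-1,1)$, so that $G$ fixes the line $\R e_1$ and acts on $H=\langle e_2,\dots,e_{n+1}\rangle$ by the standard representation. First I would observe that, for any fixed $h$, the condition $gh=hg$ is Zariski closed, so the centralizer of $\rho(\Delta)$ coincides with the centralizer of its Zariski closure. For $n\geqslant 3$ the group $\mathrm{SO}_{n-1,1}(\R)$ is semisimple without compact factor, so Borel's density theorem makes $\rho(\Delta)$ Zariski-dense in its identity component $\mathrm{SO}_{n-1,1}(\R)^{\circ}$. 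Since scalar-on-each-block matrices commute with every element of $G$, one gets in all cases that the centralizer of $\rho(\Delta)$ equals the centralizer in $\ss$ of $G^{\circ}=\iota(\mathrm{SO}_{n-1,1}(\R)^{\circ})$.

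Next I would decompose $\R^{n+1}=\R e_1\oplus H$ as a $G^{\circ}$-module: the first summand carries the trivial representation and the second the standard representation of $\mathrm{SO}_{n-1,1}(\R)^{\circ}$ on $\R^{n}$, which is absolutely irreducible for $n\geqslant 3$ (its complexification is the irreducible standard representation of $\mathrm{SO}_{n}(\C)$). These two constituents are non-isomorphic, having distinct dimensions $1$ and $n$. Schur's lemma then forces every element of the centralizer in $\mathrm{GL}_{n+1}(\R)$ to preserve each summand and to act on it as a real scalar, hence to be of the form $\mathrm{diag}(\lambda,\mu,\dots,\mu)$ with $(\lambda,\mu)\in(\R^{*})^{2}$. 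Thus the centralizer in $\mathrm{GL}_{n+1}(\R)$ is isomorphic to $(\R^{*})^{2}$.

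It remains to intersect with $\ss$ and to extract the identity component. The determinant-one condition reads $\lambda\mu^{n}=1$ and defines a one-dimensional subgroup; on its identity component one has $\lambda,\mu>0$, and writing $\mu=e^{-t}$ forces $\lambda=\mu^{-n}=e^{nt}$, which yields exactly the matrices $a_t$ together with the isomorphism $t\mapsto a_t$ onto a group isomorphic to $\R_{+}^{*}$. This completes the identification.

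The genuinely substantive inputs are the Zariski-density used in the reduction and, above all, the absolute irreducibility of the standard representation, which is precisely what guarantees real (and not merely complex) scalars in Schur's lemma. The delicate point, which I expect to be the main obstacle, is the low-dimensional case $n=2$: there the standard representation of $\mathrm{SO}_{1,1}(\R)^{\circ}$ splits over $\R$ into its two light-cone lines, so the argument by irreducibility breaks down and the centralizer must instead be read off directly from the eigenspace decomposition of a hyperbolic generator of $\rho(\Delta)$.
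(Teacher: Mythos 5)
Your argument is essentially the paper's own proof with the final computation written out: the paper invokes Borel's density theorem to replace $\rho(\Delta)$ by $\mathrm{SO}_{n-1,1}(\R)$ and then concludes with \og d'où le résultat \fg, leaving implicit precisely the Schur-lemma identification that you carry out explicitly (trivial $\oplus$ standard, absolutely irreducible and non-isomorphic constituents, hence block scalars $\mathrm{diag}(\lambda,\mu,\dots,\mu)$, cut down to the curve $a_t$ by the determinant condition). Your two-sided reduction --- the Zariski closure of $\rho(\Delta)$ contains the identity component by density, while block scalars visibly commute with the whole block group even off the identity component --- is sound, up to the cosmetic point that Borel density should be applied to the finite-index lattice $\Delta\cap\mathrm{SO}_{n-1,1}(\R)^{\circ}$, since $\Delta$ need not lie in the identity component.

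The obstacle you flag at $n=2$ is genuine, and it is a defect of the lemma as stated rather than of your proof: $\mathrm{SO}_{1,1}(\R)$ is abelian, so the paper's appeal to Borel density (which requires semisimplicity) breaks down there as well. If you carry out your suggested fallback --- the eigenspace decomposition of a hyperbolic generator, which in the light-cone basis is $\mathrm{diag}(1,e^{\ell},e^{-\ell})$ with three distinct eigenvalues --- you find that the connected centralizer in $\mathrm{SL}_3(\R)$ is the full two-dimensional positive diagonal torus, containing both the \og bulge \fg $\, a_t$ and the \og twist \fg $\, \mathrm{SO}_{1,1}(\R)^{\circ}$ itself; so the statement as printed is false for $n=2$ and true exactly on the range $n\geqslant 3$ that your proof covers. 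This is harmless for the rest of the paper, since the only fact used downstream (\og L'existence d'un tel élément est assurée par le lemme \ref{centra} \fg) is that $a_t$ centralizes $\rho(\Delta)$, which holds in every dimension because $a_t$ is scalar on each block.
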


\begin{proof}
Borel a montré que tout réseau d'un groupe de Lie algébrique semi-simple est Zariski dense (\cite{MR0123639}). Par suite, le centralisateur de $\rho(\Delta)$ dans $\ss$ est égale au centralisateur de $\mathrm{SO}_{n-1,1}(\R)$ dans $\ss$. D'où le résultat.
\end{proof}

Il est temps à présent de donner un nom à cette déformation.

\begin{defi}
Soient $\M$ une variété hyperbolique et $\Nn$ une hypersurface totalement géodésique de $\M$ de volume fini. On note $M$ la variété topologique sous-jacente à $\M$. Le chemin $c:\R \rightarrow \PP(M)$ donné par le couple $(\dev_{a_t},\rho_{a_t})$ s'appelle un \emph{pliage de $\M$ le long de $\Nn$}.
\end{defi}

\subsection{Le pliage en terme de cartes}

\par{
Dans ce paragraphe on se propose de donner la définition du pliage en terme de cartes. Commençons par rappeler comment on recolle deux variétés à bord le long d'un bord d'un point de vue topologique. 
}
\\
\par{
Soit $M^{\dagger}$ une variété à bord (à priori non connexe) avec deux bords connexes $N_g$ et $N_d$ homéomorphes via un homéomorphisme $\varphi$. Il existe alors un voisinage $\U_g$ (resp. $\U_d$) de $N_g$ et (resp. $N_d$) dans $M^{\dagger}$ homéomorphes via un homéomorphisme $\overline{\varphi}$ qui prolonge $\varphi$. Ces voisinages tubulaires permettent de recoller les bords $N_g$ et $N_d$ de $M^{\dagger}$. De façon précise, il existe alors une unique variété $M$ qui possède une sous-variété plongé $N$ homéomorphe à $N_g$ tel qu'il existe une identification de $M^{\dagger}$ avec la variété à bord $M\mid_{N}$ obtenue en découpant $M$ le long $N$. Enfin, cette identification permet d'écrire l'homéomorphisme $\overline{\varphi}$ entre les voisinages tubulaires $\U_g$ et $\U_d$ comme une réflexion dans les cartes.
}
\\
\par{
On peut faire la même chose avec une variété projective $\Mc$ à bord totalement géodésique. Cette fois-ci, il faut prendre un homéomorphisme $\overline{\varphi}$ entre $\U_g$ et $\U_d$ qui lu dans les cartes est une application projective. On obtient ainsi une structure projective sur la variété $M$.
}
\\
\par{
Plier la structure d'une variété hyperbolique $\M$ le long d'une hypersurface propre totalement géodésique $\Nn$ se fait en plusieurs étapes. Tout d'abord, on découpe $\M$ le long de $\Nn$, et on obtient la variété projective à bord totalement géodésique $\Mc$. Ensuite, on remarque que $\M$ est obtenue par le recollage de la variété projective $\Mc$ via un isomorphisme projectif $\overline{\varphi_0}$ entre deux voisinages tubulaires $\U_g$ et $\U_d$. Enfin, on recolle la variété projective $\Mc$ via un isomorphisme projectif $\overline{\varphi_t} = \alpha_t \overline{\varphi_0}$, où $\alpha_t$ est une application projective qui est l'identité sur $\Nn$ et lu dans les cartes est conjugué à la matrice $a_t$.
}

\subsection{Vision géométrique d'un pliage}\label{dessin_pliage}

\begin{center}
\begin{figure}[h!]
  \centering
  \subfloat[][]{\label{pliage1}\includegraphics[trim=0cm 10cm 5cm 0cm, clip=true, width=7cm]{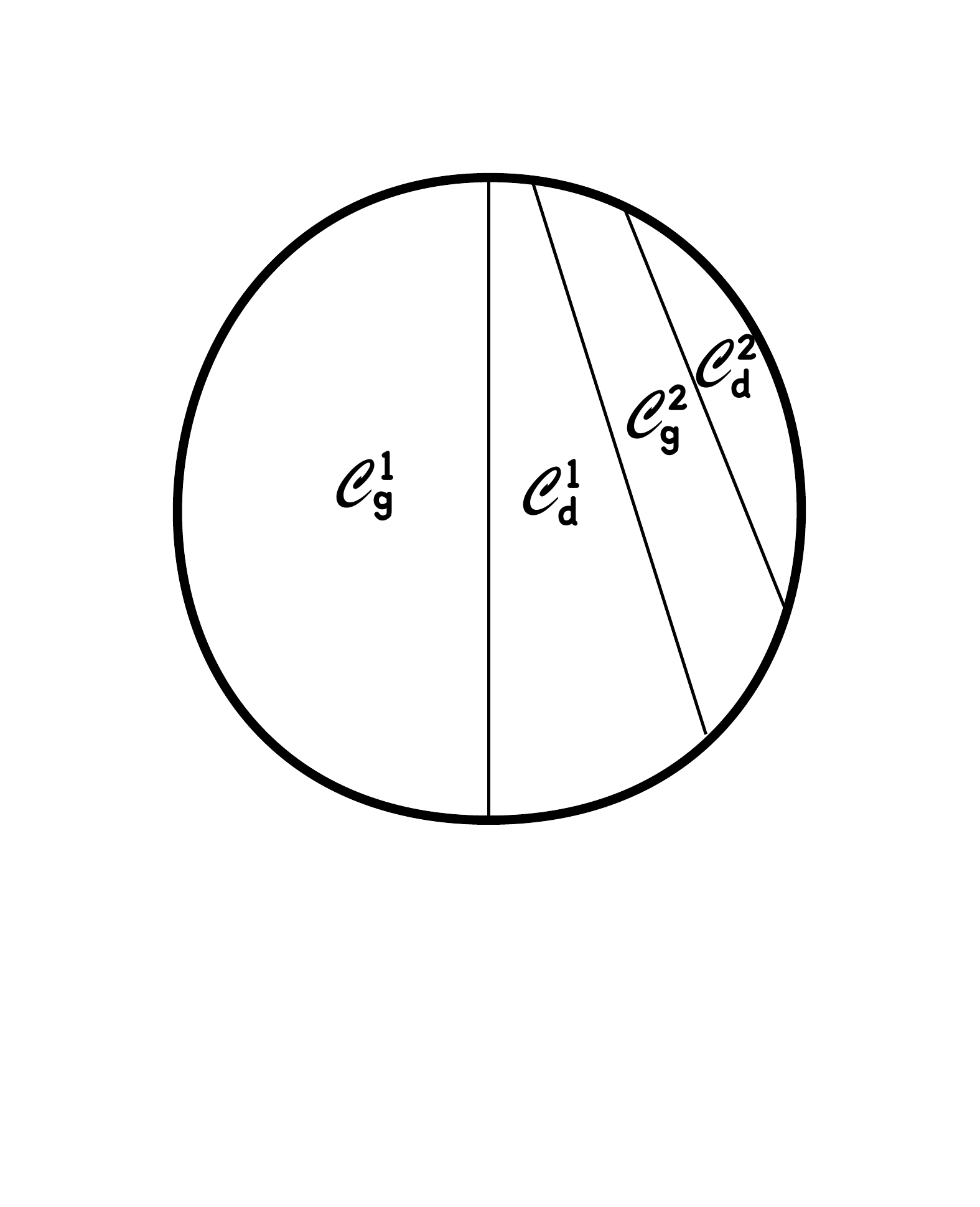}}                
  \subfloat[][]{\label{pliage2} \includegraphics[trim=0cm 10cm 5cm 0cm, clip=true, width=7cm]{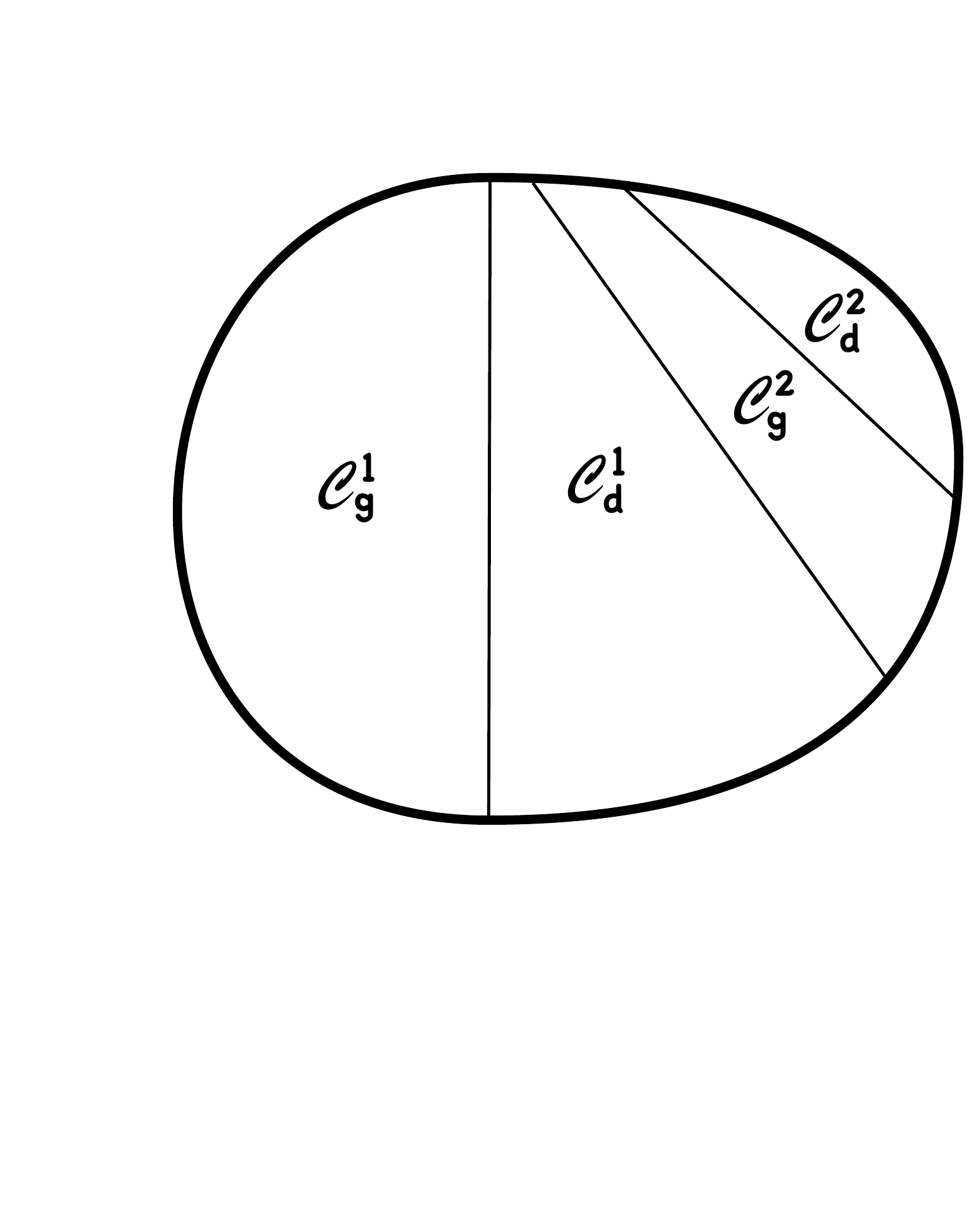}}
\\
  \subfloat[][]{\label{pliage3}\includegraphics[trim=0cm 10cm 5cm 0cm, clip=true, width=7cm]{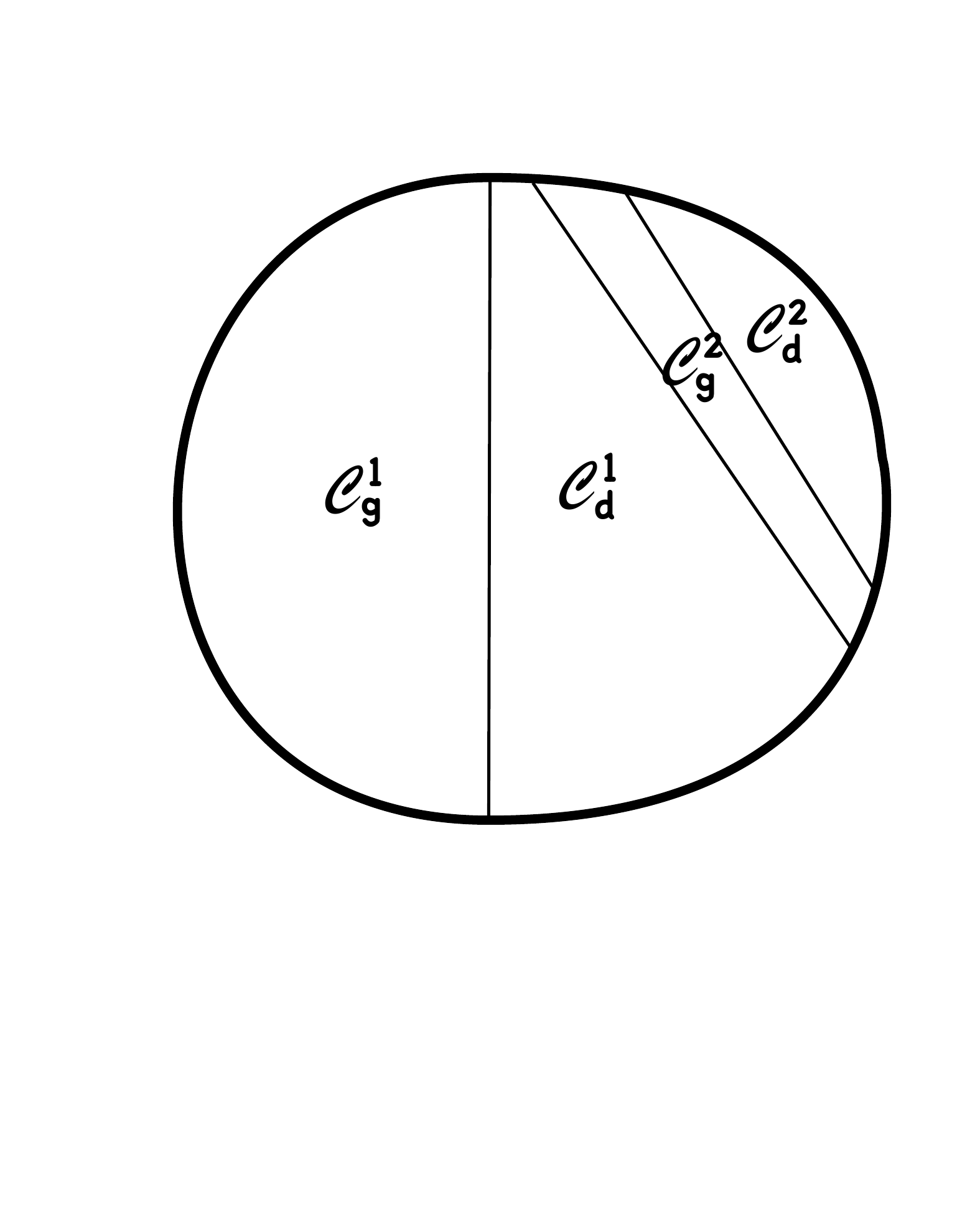}}                
  \subfloat[][]{\label{pliage4} \includegraphics[trim=0cm 10cm 5cm 0cm, clip=true, width=7cm]{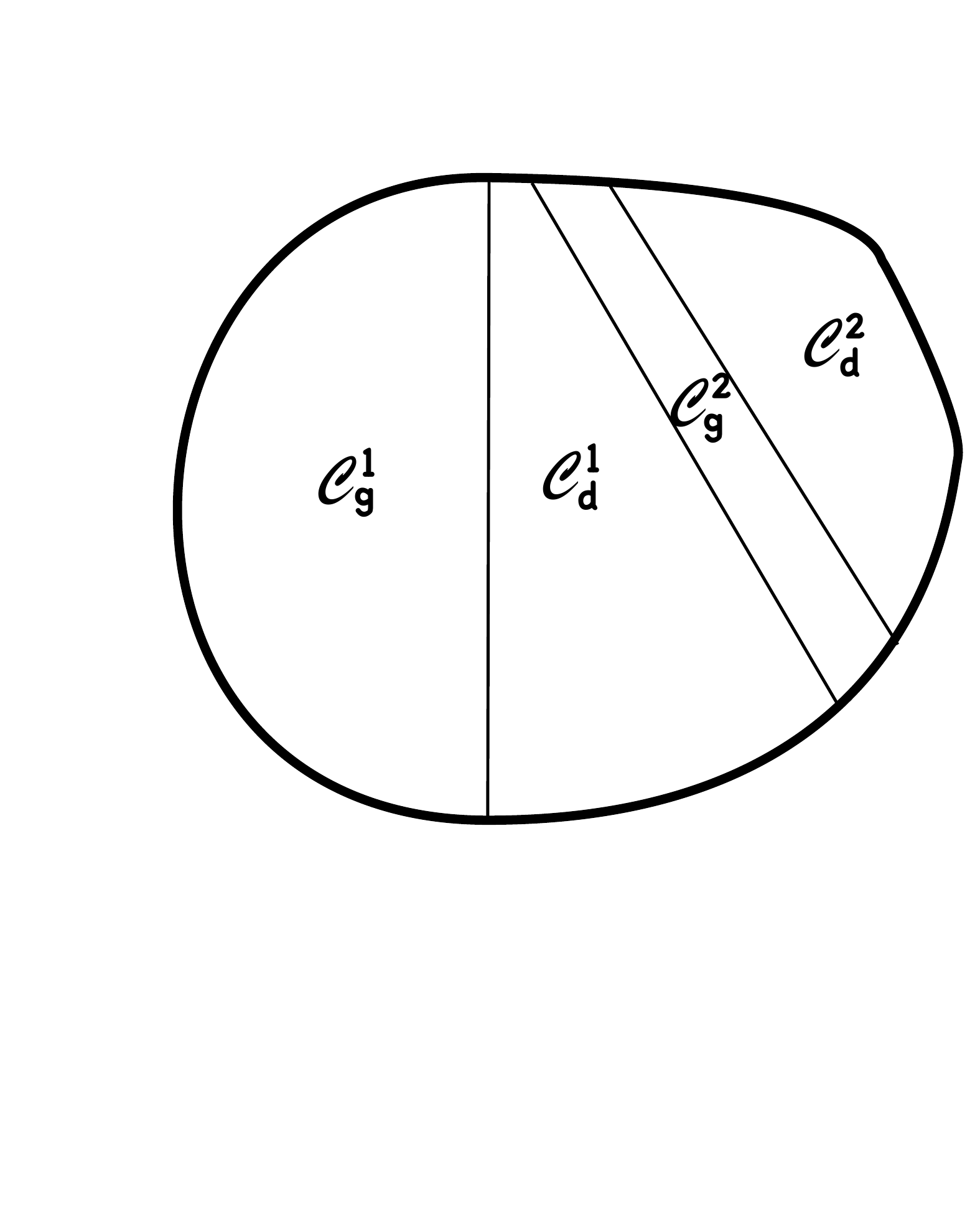}}
\caption{Pliage:\\ Dans la figure (A), le convexe est un ellipsoïde, nous allons le déformer.\\
Dans la figure (B), l'ellipsoïde a subi une transformation, la partie droite a été "gonflé" par une application $A_{H,p,t}$.\\
Dans la figure (C), le convexe de la figure (B) a subi une transformation, la partie à droite de $\C^1_d$ a été "dégonflé" par une application $A_{H,p,-t}$.\\
Dans la figure (D), le convexe de la figure (C) a subi une transformation, la partie à droite de $\C^2_g$  a été "gonflé" par une application $A_{H,p,t}$.
\label{pliage}
}
\end{figure}
\end{center}

\par{
Soient $H$ un hyperplan projectif, $p$ un point de $\PP^n$ qui n'est dans $H$ et $t$ un réel. On définit alors la transformation projective $A_{H,p,t}$ de la façon suivante:
\begin{enumerate}
\item $A_{H,p,t} \in \ss$
\item $A_{H,p,t}$ est l'identité sur $H$
\item $A_{H,p,t}$ fixe le point $p$ et la valeur propre associée à la droite $p$ de $\R^{n+1}$ est $e^{nt}$.
\end{enumerate}
}
\par{
Dans une base convenable, la matrice de $A_{H,p,t}$ est la matrice $a_t$.
}
\\
\par{
Soient  $\O$  un ouvert proprement convexe et $H$ un hyperplan de $\PP^n$ qui rencontre $\O$ et $p$ un point à l'extérieur de $\O$ et de $H$. Une des étapes du pliage d'un ouvert proprement convexe $\O$ revient à appliquer $A_{H,p,t}$ sur une composante connexe de $\O \setminus H$ et l'identité sur l'autre.
}
\\
\par{
Soit $q$ un point dans l'une des deux composantes connexes de $\O \setminus H$. On note $\O_q$ l'adhérence de la composante connexe de $\O \setminus H$ contenant $q$ et $\O_{\overline{q} }$ l'autre adhérence. Enfin, on note $\textrm{Pli}_{\O,H,p,t,q}(\O)$ l'ensemble $\O_q \cup A_{H,p,t}(\O_{\overline{q} } )$.
}
\\
\par{
Il ne semble pas évident à priori que toutes ces transformations vont préserver la convexité de l'ouvert $\O$ et l'injectivité de la développante. Mais c'est le cas. C'est l'objet du théorème \ref{theo_kapo} démontré dans la partie \ref{sec_conv}. Pour comprendre les rouages de cette démonstration, il faut remarquer les 3 points suivants. 
}
\\
\par{
On identifie le revêtement universel de $\widetilde{\M}$ de la variété hyperbolique $\M$ que l'on veut plier le long de l'hypersurface $\Nn$ avec un ellipsoïde $\O$. Les relévés de $\Nn$ à $\O$ définissent des hyperplans projectifs $H_i$ et le dual de l'hyperplan $H_i$ pour la forme quadratique définissant l'ellipsoïde $\O$ est un point $p_i$ de $\PP^n$. Le point $p_i$ est l'intersection des hyperplans tangents à $\partial \O$ en un point de $H_i \cap \partial \O$. Nous appelerons une composante connexe de la préimage de $\Nn$ dans $\O$ un \emph{mur}, et nous appelerons \emph{chambres de $\O$} les adhérences des composantes connexes de $\O$ privé des murs.
}
\\
\par{
Plier la structure projective de $\M$ signifie modifier successivement $\O$ en lui appliquant \underline{des conjuguées} des transformations $A_{H_i,p_i,t}$ ou $A_{H_i,p_i,t}^{-1}$ comme expliquer précédemment (i.e en appliquant des conjuguées de $\textrm{Pli}_{H_i,p_i,t,q_i}$). Ainsi après un nombre dénombrable de transformations (suggéré par la figure \ref{pliage}) on obtient une partie $\O_t$ de l'espace projectif qui est préservé par $\rho_t$. L'un des buts du théorème \ref{theo_kapo} est de montrer que cette partie est en fait un ouvert convexe. La \underline{remarque n°1} est que l'image d'une chambre de $\O$ par toutes ces transformations est encore convexe, puisque $\dev_t$ restreinte à une chambre est une application projective.
}
\\
\par{
La \underline{remarque n°2} est que l'image de la réunion de deux chambres adjacentes de $\O$ par toutes ces transformations est encore convexe. C'est la conséquence du lemme ci-après \ref{lemm_conv_local}. Cette remarque nous assure que tout point possède un voisinage convexe. 
}
\\
\par{
La \underline{remarque n°3} est que si $x$ est un point du bord (dans $\PP^n$) de l'un des $H_i \cap \O$, alors l'image de la réunion des chambres de $\O$ qui contiennent $x$ dans leur adhérence est encore convexe. On a une sorte de "convexité à l'infini". Il y a deux types de points $x$ sur le bord de $H_i \cap \O$, il y a ceux qui correspondent à un cusp de $\Nn$ et qui sont fixés par un groupe parabolique et il y a les autres. Si $x$ est dans la deuxième catégorie alors il y a seulement deux chambres de $\O$ qui le contiennent dans leur adhérence, on est donc ramené à la remarque n°2. Par contre, si $x$ correspond à un cusp de $\Nn$ alors il est inclus dans une infinité de $H_j$ pour $j \in J$. Mais ces $(H_j)_{j \in J}$ ne se rencontrent pas dans $\O$ par conséquent, leur intersection est un sous-espace  projectif de dimension $n-2$, et on peut les énumérer avec $\Z$. De plus, la tangente en $x$ à $\partial \O$ est préservé par tous les $A_{H_j,p_j,t}$. Par conséquent, on obtient le résultat annoncé en appliquant le lemme \ref{lemm_conv_local} à l'aide d'une récurrence.
}

\begin{lemm}\label{lemm_conv_local}
Soient $\O$ un ouvert proprement convexe de $\PP^n$, $H$ un hyperplan de $\PP^n$ qui rencontre $\O$, $p$ un point de $\PP^n$ à l'extérieur de $H$ et de $\O$. Soit $A$ une carte affine contenant $\overline{\O}$ et $p$. On note $\C_1$ et $\C_2$ les deux adhérences des composantes connexes de $\O \setminus H$. Supposons que le demi-cône $\C$ de sommet $p$ et base $H \cap \O$ contienne $\O$ alors l'ensemble $\C_1 \cup A_{H,p,t}(\C_2)$ est convexe.
\end{lemm}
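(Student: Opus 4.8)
Je propose de relever toute la situation au cône au-dessus de $\O$, où $A_{H,p,t}$ devient une dilatation linéaire le long de la direction de $p$. Soit $K \subset \R^{n+1}$ le cône convexe saillant tel que $\O = \PP(K)$, soit $\phi$ une forme linéaire avec $H = \PP(\ker\phi)$, et soit $\tilde p$ un relevé de $p$ normalisé par $\phi(\tilde p)=1$ (licite puisque $p \notin H$). Tout vecteur s'écrit alors de façon unique $v = \alpha\,\tilde p + h$ avec $\alpha = \phi(v)$ et $h \in \ker\phi$, ce qui fournit des coordonnées $(\alpha,h)$. Comme $A_{H,p,t}$ fixe $H$ point par point et vaut $e^{nt}$ sur la droite $p$ (et $e^{-t}$ sur $H$), il agit projectivement comme $\mathrm{diag}(e^{(n+1)t},1,\dots,1)$, c'est-à-dire par $g_s:(\alpha,h)\mapsto(s\alpha,h)$ avec $s=e^{(n+1)t}>0$. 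Posons $K_1 = K\cap\{\alpha\geq 0\}$, $K_2 = K\cap\{\alpha\leq 0\}$ et $F = K\cap\{\alpha=0\}$ la face commune, de sorte que $\C_i = \PP(K_i)$. L'application $g_s$ fixe $F$, ne change pas la composante $h$ et préserve chacun des demi-espaces $\{\alpha\gtrless 0\}$; il suffira donc de démontrer que le cône $\widehat K := K_1 \cup g_s(K_2)$ est convexe, ce qui équivaut à la convexité de $\C_1 \cup A_{H,p,t}(\C_2)$.

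Le point clé est de traduire correctement l'hypothèse sur le demi-cône, et c'est là que réside l'obstacle principal. La projection centrale de centre $p$ envoie $\PP(v)$ sur $\PP\bigl(v-\phi(v)\tilde p\bigr)=\PP(h)$, c'est-à-dire sur sa composante $h$. Dire que le demi-cône de sommet $p$ et de base $H\cap\O$ contient $\O$ signifie exactement que cette projection envoie $\O$ dans $H\cap\O = \PP(F)$ \emph{et} que l'on reste sur la bonne nappe; autrement dit, que pour tout $v=(\alpha,h)\in K$ on a $h\in F$. Il faudra vérifier avec soin que c'est bien la composante dans $F$, et non dans $-F$, qui est sélectionnée par le caractère \og demi \fg{} du cône : c'est la seule et unique utilisation de l'hypothèse dans toute la preuve.

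Il restera alors à établir la convexité de $\widehat K$ par le critère des segments. Si deux points sont du même côté de $\{\alpha=0\}$, le segment qui les joint reste dans $K_1$ ou dans $g_s(K_2)$, qui sont convexes. Soient donc $P=(\alpha_P,h_P)\in K_1$ et $Q=g_s(Q_0)\in g_s(K_2)$, avec $Q_0=(\alpha_0,h_Q)\in K_2$ de composante $h$ égale à $h_Q$, situés de part et d'autre de $\{\alpha=0\}$. Le segment $[P,Q]$ rencontre $\{\alpha=0\}$ en un unique point $w=(0,h_w)$; les coordonnées étant affines le long du segment, $h_w$ est la combinaison convexe correspondante de $h_P$ et $h_Q$. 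Or l'hypothèse fournit $h_P,h_Q\in F$, donc $h_w\in F$ par convexité de $F$, et ainsi $w\in F$. Comme $g_s$ fixe $F$, le point $w$ appartient à la fois à $K_1$ et à $g_s(K_2)$; la convexité de ces deux cônes donne $[P,w]\subset K_1$ et $[w,Q]\subset g_s(K_2)$, d'où $[P,Q]\subset\widehat K$. Ceci prouvera la convexité de $\widehat K$, et donc celle de $\C_1\cup A_{H,p,t}(\C_2)$.
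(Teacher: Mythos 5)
Votre preuve est correcte, mais elle vectorise un argument que le texte mène de façon purement synthétique dans la carte affine. La démonstration du papier n'utilise que deux propriétés du demi-cône $\C$ : il est convexe et il est préservé par $A_{H,p,t}$. Pour $x \in \C_1$ et $y \in A_{H,p,t}(\C_2)$, ces deux propriétés placent $y$ dans $\C$, donc le segment $[x,y]$ tout entier dans $\C$ ; ce segment traverse alors le mur $H \cap \O$ en un point $z$, et on conclut en découpant $[x,y]=[x,z]\cup[z,y]$ et en utilisant la convexité de $\C_1$ et de $A_{H,p,t}(\C_2)$, qui contiennent tous deux $z$. Vous obtenez exactement le même point pivot autrement : votre traduction de l'hypothèse en \og la composante $h$ de tout vecteur de $K$ appartient à $F$ \fg{} est correcte, nappe comprise --- en prenant une forme linéaire $\phi'$ positive sur le cône au-dessus de $\O$ et telle que $\phi'(\tilde p)>0$, l'appartenance au demi-cône équivaut à $\phi'(h)>0$, c'est-à-dire $h\in F$, indépendamment du signe de $\alpha$ --- et le fait que le point de traversée $w$ tombe dans $F$ (convexité de $F$, combinaison convexe de $h_P,h_Q\in F$) est précisément l'énoncé \og $z\in H\cap\O$ \fg{} du texte. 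Les deux preuves partagent donc le même squelette (tout segment transversal coupe le mur à l'intérieur de $\O$, où les deux morceaux se recollent), mais vous n'invoquez ni la convexité du demi-cône en tant qu'ensemble, ni son invariance par $A_{H,p,t}$ : la première est remplacée par la convexité de la face $F$, la seconde par le fait que $g_s$ ne modifie pas la composante $h$ (votre calcul de l'action est exact : valeurs propres $e^{nt}$ sur la droite $p$ et $e^{-t}$ sur $H$, d'où $s=e^{(n+1)t}$ après renormalisation). Votre version est plus longue à mettre en place mais localise mieux le rôle de l'hypothèse et rend explicite la question de la nappe, que le texte absorbe dans la phrase \og le point $y$ appartient à $\C$ \fg. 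Seule imprécision, sans conséquence et d'ailleurs partagée par le texte : les $\C_i$ sont des adhérences dans $\PP^n$, alors que vos $K_i$ omettent le bord de $\O$ ; il suffit de refaire le même argument avec $\overline{K}$ et $\overline{F}$, ou de noter que le cône $\widehat K\cup\{0\}$ est saillant (grâce à $h\in\overline{F}$ et $p\notin\overline{\O}$), de sorte que l'adhérence du convexe projectif associé est encore convexe.
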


\begin{proof}
Soient $x$ un point de $\C_1$ et $y$ un point de $A_{H,p,t}(\C_2)$. Comme le demi-cône est préservé par $A_{H,p,t}$ et contient $\O$, le point $y$ appartient à $\C$. Comme le demi-cône $\C$ est convexe, il contient le segment $[x,y]$. Ce segment traverse le mur $H \cap \O$ en un point $z$, le segment $[x,z]$ est inclus dans le convexe $\C_1$ et le segment $[z,y]$  est inclus dans le convexe $A_{H,p,t}(\C_2)$.
\end{proof}

\subsection{Un théorème de convexité}\label{theo_conv}

La première difficulté est de comprendre pourquoi cette déformation donne une structure projective PROPREMENT CONVEXE. Signalons tout de même que si la variété $M$ était compacte alors le théorème suivant de Koszul (\cite{MR0239529}) nous assurerait que pour $t$ assez petit la déformation de $\rho_0$ fournit par l'élément $a_t$ serait encore convexe.

\begin{theo}\label{theo_kos}
Soit $M$ une variété compacte, l'espace $\beta(M)$ est ouvert dans $\PP(M)$.
\end{theo}

Nous allons montré le théorème suivant:

\begin{theo}\label{theo_kapo}
Soit $\M$ une variété hyperbolique et $\Nn$ une sous-variété totalement géodésique de $\M$.  Les structures projectives associées au pliage de $\M$ le long de $\Nn$ sont proprement convexe. De plus, notons $\dev_t :\O_0 \rightarrow \O_t$ la développante de la nouvelle structure projective, alors l'application $\dev_t$ se prolonge de façon unique en un homéomorphisme  $\pi_1(\M)$-équivariant: $\dev_t : \overline{\O_0} \rightarrow \overline{\O_t}$ qui induit un homéomorphisme $\dev_t : \partial \O_0 \rightarrow \partial \O_t$.
\end{theo}

La partie \ref{sec_conv} est consacré à la démonstration de ce théorème.

\begin{rema}
Dans \cite{MR2350468}, Misha Kapovich montre une version proche de ce théorème. Au lieu de recoller des variétés projectives convexes le long d'une hypersurface totalement géodésique. Il recolle aussi des variétés projectives convexes \underline{compactes} à coins qui vérifient certaines conditions de compatibilité.
\end{rema}

\begin{nota}
Soit $t >0$, on notera $\Lambda_t$ (resp. $\mho_t$ ) le groupe (resp. l'ouvert proprement convexe) obtenu par le pliage de $\M_0$ le long de $\Nn_0$ à l'aide l'élément $a_t$. On notera $\dev_t: \mho_0 \rightarrow \mho_t$ la nouvelle développante.
\end{nota}

\section{Démonstration du théorème de convexité}\label{sec_conv}

Le but de cette partie est de donner une démonstration du théorème \ref{theo_conv} dont le théorème \ref{theo_kapo} est un corollaire.

\subsection{Définition}
$\,$
\par{
Tout d'abord pour démontrer un résultat de convexité le cadre de $\PP^n$ n'est pas le plus approprié. Nous allons donc nous placer sur la sphère projective $\S$ qui est le revêtement à deux feuillets de $\PP^n$ ou encore l'espace des demi-droites vectorielles de $\R^{n+1}$. On notera $\pi$ la fibration naturelle $\pi: \R^{n+1} \setminus \{ 0 \} \rightarrow \S$
}

\begin{defi}
Une partie $\O$ de $\S$ est dite \emph{convexe} lorsque la réunion $\pi^{-1}(\O) \cup \{0\}$ est une partie convexe de $\R^{n+1}$. Un ouvert convexe $\O$ de $\S$ est dit \emph{proprement convexe} lorsque son adhérence est incluse dans une carte affine de $\S$, ce qui est équivalent au fait que son adhérence ne contient pas de points diamétralement opposés.
\end{defi}

\begin{rema}\label{struct_conv}
Tout ouvert convexe de $\S$ est ou bien $\S$ tout entier ou bien inclus dans une carte affine. De plus, soit $\O$ un ouvert convexe de $\S$, on remarquera que si $E_1$ et $E_2$ sont deux sous-espaces affines inclus dans $\O$ alors il existe un sous-espace affine $E_3$ inclus dans $\O$ dont la direction $\overrightarrow{E_3}$ est $\overrightarrow{E_1} \oplus \overrightarrow{E_2}$. Tout ouvert convexe de $\S$ possède donc une direction maximale $\overrightarrow{E_{\O}}$ égale au sous-espace vectoriel engendré par l'intersection $\O \cap - \O$ dans $\R^{n+1}$. Enfin, la projection de $\O$ dans la sphère projective quotient $\mathbb{S}\big(\R^{n+1}/_{\overrightarrow{E_{\O}}}\big)$ est un ouvert proprement convexe.
\end{rema}

\par{
Nous allons avoir besoin d'un peu de vocabulaire. Soit $\M$ une variété projective. On peut définir la notion de segment et de convexité sur le revêtement universel $\widetilde{\M}$ de $\M$.
}

\begin{defi}
Un \emph{segment} de $\widetilde{\M}$ est une application $s:[0,1] \rightarrow \widetilde{\M}$ tel que la compos\'ee $\dev \circ s : [0,1] \rightarrow \S$ est une application continue injective qui définit un segment de $\S$ de longueur inférieure ou égale à $\pi$ pour la distance canonique sur $\S$.

Une partie $A$ de $\widetilde{\M}$ est dite \emph{convexe} lorsque tout couple de points de $A$ peut être joint par un segment.
\end{defi}

%
%

\begin{defi}
Soit $\M$ une variété projective. On se donne $(W_i)_{i \in I}$ une famille localement finie d'hypersurfaces propres totalement géodésiques de $\M$. On appelera les  $(W_i)_{i \in I}$ des \emph{murs}. On appelera \emph{chambre} l'adhérence de toutes composantes connexes de $\displaystyle{\M \setminus \bigcup_{i \in I} W_i }$. On dira que deux chambres sont \emph{adjacentes} lorsque leur intersection est incluse dans un unique mur.

L'intersection de deux murs $W_1$ et $W_2$ est vide ou une sous-variété propre totalement géodésique de codimension 2. Lorsqu'elle est non vide, on dira que l'intersection de deux murs $W_1$ et $W_2$ est \emph{incluse dans une situation diédrale} lorsqu'il existe un entier $m \geqslant 2$, une suite de $m$ murs $(W_i)_{i=1...m}$, et une suite de $2m$ chambres $(C_i)_{i=1...2m}$ tel que (Voir figure \ref{diedral}):
\begin{itemize}
\item l'intersection de ces $m$ murs et $2m$ chambres soient l'intersection $W_1 \cap W_2$.
\item Deux chambres consécutives sont adjacentes.
\item Le mur contenant l'intersection des deux chambres consécutives $\C_i$ et $\C_{i+1}$ est aussi le mur contenant l'intersection des deux chambres consécutives $\C_{i+m}$ et $\C_{i+1+m}$.
\end{itemize}
\end{defi}

\begin{figure}[!h]
\begin{center}
\includegraphics[trim=-4cm 18cm 4cm 1cm, clip=true, width=14cm]{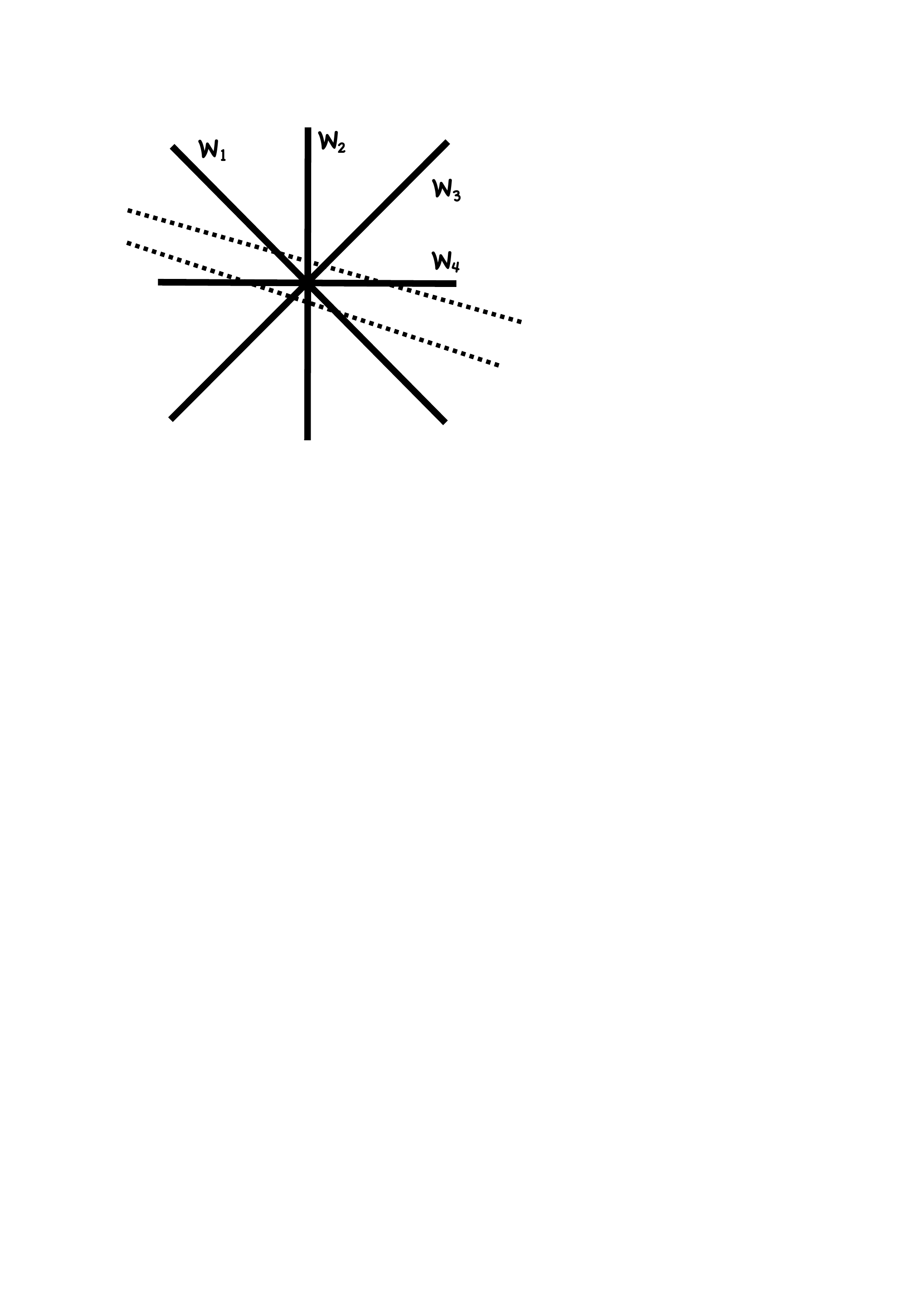}
\caption{Situation diédrale} \label{diedral}
\end{center}
\end{figure}

\begin{rema}
L'image d'un mur par la développante de la structure projective de $\M$ est un ouvert convexe d'un hyperplan projectif de $\S$.
\end{rema}

\subsection{Le th\'eor\`eme}

\begin{rema}\label{prolongement}
L'application $\dev$ est un hom\'eomorphisme local et l'espace $(\PP^n,d_{can})$ est un espace m\'etrique, il existe donc une unique distance sur $\widetilde{\M}$ tel que l'application $\dev$ est une isométrie locale. On note $Cl(\widetilde{\M})$ le complété de $\widetilde{\M}$ pour cette distance. L'espace $Cl(\widetilde{\M})$ est compact et l'application $\dev$ se prolonge en une application continue encore notée $\dev$ de $Cl(\widetilde{\M})$ vers $\S$.
\end{rema}

\begin{theo}\label{theo_conv}
Soit $\M$ une variété projective. On se donne une famille finie d'hypersurfaces propres totalement géodésiques $(H_i)_{i \in I}$. Cette famille définit une famille localement finie de murs du revêtement universel $\widetilde{\M}$. On suppose que:
\begin{enumerate}
\item Les chambres de $\widetilde{\M}$ sont convexes.
\item La réunion de deux chambres adjacentes est convexe.
\item Toute intersection non vide de deux murs est incluse dans une situation diédrale.
\item Pour tout mur $W$, et tout point $x_{\infty} \in \partial W  \subset Cl(\widetilde{\M})$, la réunion des chambres contenant  $x_{\infty}$ dans leur adhérence (dans $Cl(\widetilde{\M})$) est convexe.
\end{enumerate}
Alors, la variété projective $\M$ est convexe.
\end{theo}

\subsection{Démonstration du théorème \ref{theo_conv}}
$\,$
\par{
La démonstration de ce théorème se déroule en plusieurs étapes. La difficulté est d'obtenir le lemme suivant:
}
\begin{lemm}\label{lemm_conv}
L'ensemble $\widetilde{\M}$ est convexe.
\end{lemm}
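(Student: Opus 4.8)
Le plan est de ramener la convexité de $\widetilde{\M}$ à un argument de connexité. On fixe un point base $x_0 \in \widetilde{\M}$ et on considère l'ensemble $\mathcal{E}$ des points $y \in \widetilde{\M}$ qui peuvent être joints à $x_0$ par un segment au sens de la définition précédente. L'ensemble $\mathcal{E}$ est non vide, car il contient $x_0$ et tout point d'une même chambre que $x_0$ (hypothèse (1)). Comme $\widetilde{\M}$ est connexe, il suffira de montrer que $\mathcal{E}$ est à la fois ouvert et fermé pour conclure $\mathcal{E} = \widetilde{\M}$, c'est-à-dire que toute paire de points est joignable, ce qui est exactement la convexité de $\widetilde{\M}$.

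L'outil essentiel est la développante, qui transporte le problème sur la sphère projective $\S$: par définition, un segment de $\widetilde{\M}$ est un relèvement, injectif et de longueur $\leqslant \pi$, d'un segment droit de $\S$. Joindre $x_0$ à $y$ revient donc à relever le segment droit $[\dev(x_0),\dev(y)]$ de $\S$ en un chemin de $\widetilde{\M}$ issu de $x_0$, ce relèvement traversant une suite de chambres, finie par locale finitude des murs le long d'un compact. La difficulté est que l'opération de redressement modifie la suite de chambres traversées; c'est pourquoi on préfère l'argument topologique d'ouverture-fermeture plutôt qu'une récurrence directe sur le nombre de murs séparant $x_0$ et $y$.

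Pour l'ouverture de $\mathcal{E}$, on utiliserait que $\dev$ est un homéomorphisme local et que, sous les hypothèses (1), (2) et (3), tout point de $\widetilde{\M}$ possède un voisinage convexe: la convexité à l'intérieur d'une chambre donne (1), celle à travers un mur donne (2), et la situation diédrale (3) fournit la convexité locale au voisinage de l'intersection de deux murs. La développante est alors injective sur ces voisinages convexes. Étant donné un segment $s$ joignant $x_0$ à $y$, d'image droite $\dev\circ s$, tout point $y'$ voisin de $y$ a une image $\dev(y')$ voisine, et le segment droit $[\dev(x_0),\dev(y')]$, proche de $\dev\circ s$, se relève le long de $s$ en un segment joignant $x_0$ à $y'$. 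D'où $y' \in \mathcal{E}$.

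La fermeture de $\mathcal{E}$ est l'obstacle principal. Soit $y_n \to y$ avec $y_n \in \mathcal{E}$ et $s_n$ les segments correspondants. Les segments développés $\dev\circ s_n$ sont des segments droits de $\S$ qui convergent vers le segment droit $[\dev(x_0),\dev(y)]$; par compacité de $Cl(\widetilde{\M})$ (remarque \ref{prolongement}) et quitte à extraire une sous-suite, les $s_n$ convergent vers un chemin limite $s$ reliant $x_0$ à $y$ dans $Cl(\widetilde{\M})$. Le danger est que ce chemin limite s'échappe vers le bord idéal $Cl(\widetilde{\M})\setminus\widetilde{\M}$: un tel point de fuite serait un point idéal $x_\infty \in \partial W$ d'un mur $W$, atteint comme limite de chambres intérieures. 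C'est exactement l'hypothèse (4) qui interdit cette fuite: la convexité de la réunion des chambres aboutissant à $x_\infty$ assure que le segment droit limite passe par l'image développée de ces chambres et se relève donc à l'intérieur de $\widetilde{\M}$. On obtient ainsi que $s$ est un segment de $\widetilde{\M}$, donc $y \in \mathcal{E}$. Le cœur de la preuve est ce contrôle du comportement à l'infini, la condition diédrale (3) servant par ailleurs à gérer la combinatoire des chambres traversées lorsque le segment limite longe l'intersection de deux murs.
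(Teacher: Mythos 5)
Your overall strategy is the same as the paper's: reduce convexity to a connectedness (open-and-closed) argument for the set of points joinable to a base point, with hypothesis (4) used to forbid escape of limit segments to the ideal boundary. Your closedness paragraph is essentially a sketch of the paper's Lemmes \ref{bord} et \ref{lemm_ferme} (track the crossing points $x_n^i$; if they diverge they all accumulate at a single ideal point $x^1_{\infty}$ of a wall, and the convexity of the union of chambers at $x^1_{\infty}$ forces $y_n \to x^1_{\infty} \notin \widetilde{\M}$, a contradiction), and that part is acceptable as a sketch.

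There is, however, a genuine gap in your openness step. You assert that hypotheses (1), (2), (3) give every point of $\widetilde{\M}$ a convex neighborhood. They do not: at a point lying on three walls whose intersection has codimension 3 — a \emph{point singulier} in the paper's terminology — hypothesis (2) covers only one wall, and hypothesis (3) covers only pairwise intersections (and even there the paper uses the dihedral situation solely to prove that the set of walls crossed is locally constant, Lemme \ref{lemm_const}, not local convexity); nothing guarantees that the union of chambers around a triple intersection is convex. Consequently, if the segment $[x_0,y]$ passes through such a point, your relifting argument for nearby $y'$ is unjustified, and the openness of your set $\mathcal{E}$ fails exactly there. The paper's proof is structured precisely to circumvent this: it excises the singular set $Sing$, works chamber by chamber with $\C^{reg}$ (joinable avoiding $Sing$) and $\C^*$, proves openness only for $\C^{reg}$ (Lemme \ref{lemm_ouvert}) and closedness of the components of $\C^*$ (Lemme \ref{lemm_ferme}), then uses the fact that $Sing$ is a locally finite union of codimension-3 submanifolds, so the bad locus $S$ in a convex chamber has codimension 2 and its complement is connected and dense, yielding $\C^{reg} \neq \varnothing \Rightarrow \C^* = \C$ (corollaire \ref{coro_conv}); finally an induction on the combinatorial distance between chambers gives star-shapedness from the (arbitrary) base point. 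To repair your proof you would need either to add the hypothesis that walls meet at most two at a time, or to reproduce this codimension argument — the direct open-and-closed argument on all of $\widetilde{\M}$ is exactly what the singular set obstructs.
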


Nous allons commençer par montrer comment ce lemme entraine le th\'eor\`eme \ref{theo_conv}. Ensuite nous montrerons ce lemme à l'aide d'un argument de connexité.

\begin{proof}[D\'emonstration du th\'eor\`eme \ref{theo_conv} \`a l'aide du lemme \ref{lemm_conv}]
On doit montrer que l'application  $\dev$ est un homéomorphisme sur son image $\O$ et que $\O$ est une partie convexe de $\PP^n$.

L'application $\dev$ est un homéomorphisme local pour montrer que c'est un homéomorphisme sur son image, il suffit donc de montrer qu'elle est injective. 

Soient $x$ et $y$ deux points de $\widetilde{\M}$, il existe un segment $s$ qui relie $x$ \`a $y$ dans $\widetilde{\M}$, l'application $\dev \circ s$ est injective donc si $\dev(x) =\dev(y)$ alors $x=y$. L'application $\dev$ est donc un homéomorphisme sur son image $\Omega$.

Enfin, l'ouvert $\Omega$ est convexe puisque c'est l'image de $\widetilde{\M}$ qui est convexe.
\end{proof}

\subsection{D\'emonstration du lemme \ref{lemm_conv}}
$\,$
\par{
On appelle \emph{point singulier} de $\widetilde{\M}$ tout point inclus dans l'intersection de trois murs $W_1$, $W_2$ et $W_3$ tel que $W_1 \cap W_2 \cap W_3$ est une sous-variété totalement géodésique de codimension 3. On note $Sing$ l'ensemble des points singuliers de $\widetilde{\M}$.
}
Soient $x$ un point de $\widetilde{\M}$ et $\mathcal{C}$ une chambre de $\widetilde{\M}$.
On d\'efinit les ensembles suivants:

$$\C^{reg} = \{ y \in \C \textrm{ tel qu'il existe un segment dans }  \widetilde{\M}- Sing \textrm{ reliant } x \textrm{ \`a } y \}$$

$$\C^* = \{ y \in \C \textrm{ tel qu'il existe un segment dans }  \widetilde{\M} \textrm{ reliant } x \textrm{ \`a } y \}$$

\begin{lemm}\label{lemm_const}
L'application qui a $y \in \C^{reg}$ associe le nombre et l'ensemble des murs traversées par le segment $[x,y]$ est localement constante.
\end{lemm}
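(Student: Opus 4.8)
I need to show that the function assigning to each $y \in \C^{reg}$ the number and the set of walls crossed by the segment $[x,y]$ is locally constant on $\C^{reg}$. The key observation is the definition of $\C^{reg}$: every such $y$ is joined to $x$ by a segment lying in $\widetilde{\M} - Sing$, so the segment avoids all codimension-$3$ intersections of walls. This means the segment only ever crosses walls transversally at points that lie in at most two walls simultaneously (the dihedral situations being codimension $2$). The plan is to fix $y \in \C^{reg}$, pick a segment $s = [x,y]$ avoiding $Sing$, and show that for all $y'$ near $y$ in $\C$ the segment $[x,y']$ crosses exactly the same walls in the same order.

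**The local argument.** First I would record that $[x,y]$ meets only finitely many walls: since $s$ is compact and the wall family is locally finite, $s$ passes through a finite increasing sequence of parameters $0 < t_1 < \dots < t_k < 1$ where it crosses walls $W_{i_1}, \dots, W_{i_k}$, and between consecutive crossings $s$ stays in the interior of a single chamber. Because $s$ avoids $Sing$, at each crossing parameter $t_j$ the point $s(t_j)$ lies in exactly one wall (or in a dihedral codimension-$2$ stratum, but crucially not in three walls), and the crossing is transversal in the sense that $\dev \circ s$ passes from one side of $\dev(W_{i_j})$ to the other. Now I would use the continuity of the developing map together with the local homeomorphism property: for $y'$ sufficiently close to $y$, the straight segment $[x,y']$ in $\widetilde{\M}$ (which exists and is close to $s$ because $y' \in \C$ and by the openness of the chamber interiors) can be followed parameter by parameter, and by transversality each of the $k$ crossings persists and no new crossing is created near an endpoint of a chamber-interval. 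Here the hypothesis that $y$ lies in the \emph{open} set $\C^{reg}$ is what guarantees we are not at a wall crossing at the endpoint itself.

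**Handling the endpoints and the chamber structure.** The subtle point is ensuring no wall crossing is lost or gained at the transition parameters $t_j$ as we perturb $y$. I would argue that since $s(t_j)$ is an interior point of $W_{i_j}$ relative to the crossing (avoiding $Sing$, and using hypothesis (3) that any wall-intersection sits in a dihedral situation so that locally the picture is a finite fan of chambers around a codimension-$2$ stratum), there is a neighborhood of $s(t_j)$ in $\widetilde{\M}$ in which $W_{i_j}$ separates the two adjacent chambers cleanly. For $y'$ near $y$ the perturbed segment stays inside this neighborhood near parameter $t_j$ and must therefore cross $W_{i_j}$ exactly once, and cross no other wall there. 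Away from the $t_j$ the segment lies in chamber interiors, which are open, so small perturbations keep it inside the same chamber. Combining these two observations over the finite set of crossings shows $[x,y']$ crosses precisely $W_{i_1}, \dots, W_{i_k}$ in the same order, giving the same number and set of walls.

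**Main obstacle.** The delicate step is the transversality and persistence of crossings at the parameters $t_j$, i.e.\ ruling out that a perturbation of $y$ causes the segment to graze a wall tangentially (producing a spurious extra crossing or losing one). This is precisely where I expect to lean on the dihedral hypothesis (3) and on the exclusion of $Sing$: a segment avoiding singular points meets each wall it crosses transversally, so the number of crossings is stable under small perturbations. Once local constancy of both the count and the wall-set is established on $\C^{reg}$, the lemma follows, and it will feed into the connectedness argument used to prove Lemma~\ref{lemm_conv}.
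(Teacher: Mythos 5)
Your proof is correct and takes essentially the same approach as the paper: the paper's entire proof of this lemma consists of the remark that the dihedral hypothesis yields the result, together with a pointer to figure \ref{diedral}, and your argument (finitely many crossings by compactness and local finiteness, stability of the segment inside open chamber interiors, and persistence of the crossed-wall set near codimension-2 strata via the dihedral fan structure) is precisely the detailed expansion of that remark. In particular you correctly identify hypothesis (3) and the exclusion of $Sing$ as the two ingredients the paper implicitly relies on.
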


\begin{proof}
C'est l'hypothèse "toute intersection non vide de deux murs est incluse dans une situation diédrale" qui donne ce lemme. Il suffit de regarder la figure \ref{diedral}.
\end{proof}

\begin{lemm}\label{lemm_ouvert}
L'ensemble $\C^{reg}$ est ouvert dans $\C$ .
\end{lemm}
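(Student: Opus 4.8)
Le plan est de montrer l'ouverture de $\C^{reg}$ par un argument de perturbation du segment, en exploitant le fait que $\dev$ est un homéomorphisme local et que, dans $\S$, le segment géodésique joignant deux points dépend continûment de ses extrémités. Soit $y_0 \in \C^{reg}$ et soit $\sigma:[0,1] \to \widetilde{\M} - Sing$ un segment reliant $x$ à $y_0$. Je commence par observer que $Sing$ est fermé dans $\widetilde{\M}$ : par locale finitude de la famille des murs, au voisinage de tout point seuls un nombre fini de murs interviennent, de sorte que $Sing$ y est une réunion finie de sous-variétés totalement géodésiques de codimension $3$, donc un fermé. L'image $\sigma([0,1])$ étant compacte et disjointe de $Sing$, il existe $\epsilon > 0$ tel que le $\epsilon$-voisinage de $\sigma([0,1])$, pour la distance de la remarque \ref{prolongement}, évite encore $Sing$.

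J'établis ensuite l'injectivité de $\dev$ sur un voisinage tubulaire de $\sigma$. On utilise le fait suivant, classique pour les homéomorphismes locaux à valeurs dans un espace séparé : un homéomorphisme local qui est injectif le long d'un arc compact l'est sur tout un voisinage de cet arc (on subdivise $[0,1]$ de sorte que $\dev$ soit un homéomorphisme sur chaque morceau, puis on recolle ces inverses locaux en utilisant l'injectivité de $\dev \circ \sigma$). Comme $\dev \circ \sigma$ est un segment injectif de $\S$, on obtient ainsi un voisinage ouvert $U$ de $\sigma([0,1])$, que l'on peut choisir inclus dans le $\epsilon$-voisinage précédent, tel que $\dev|_U$ soit un homéomorphisme sur un ouvert $V$ de $\S$ contenant le segment $\dev \circ \sigma$. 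On dispose donc d'une section $(\dev|_U)^{-1} : V \to U$.

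Il ne reste plus qu'à perturber l'extrémité. Le segment $\dev\circ\sigma = [\dev(x),\dev(y_0)]$ est compact et contenu dans l'ouvert $V$ ; comme dans $\S$ le segment géodésique $[\dev(x),\dev(y)]$ dépend continûment de $y$, il existe un voisinage de $y_0$ dans $\C$ tel que, pour tout $y$ dans ce voisinage, on ait $y \in U$, le segment $[\dev(x),\dev(y)]$ reste inclus dans $V$ et demeure injectif de longueur $\le \pi$. Son image réciproque par $(\dev|_U)^{-1}$ est alors un segment de $\widetilde{\M}$ reliant $x$ à $y$ (ses extrémités sont bien $x$ et $y$ car $x,y \in U$ et $\dev|_U$ est injective), entièrement contenu dans $U$, donc dans le $\epsilon$-voisinage de $\sigma$, donc disjoint de $Sing$. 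Ainsi $y \in \C^{reg}$, et $\C^{reg}$ est ouvert dans $\C$. Le point délicat est l'injectivité tubulaire de $\dev$ au voisinage de $\sigma$ : c'est là qu'interviennent de façon essentielle le caractère d'homéomorphisme local de $\dev$ et l'injectivité de $\dev\circ\sigma$ ; le reste n'est qu'une affaire de continuité des géodésiques de $\S$ et de fermeture de $Sing$.
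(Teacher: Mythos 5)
Votre démonstration est correcte pour l'essentiel, mais elle suit une route réellement différente de celle de l'article. La preuve de l'article ne fait jamais appel à l'injectivité locale de $\dev$ le long du segment : elle découpe le segment $[x,y]$ en ses points d'intersection $x^1,\dots,x^N$ avec les murs, puis utilise la convexité des chambres (hypothèse 1 du théorème \ref{theo_conv}) et l'ouverture de $\widetilde{\M}\setminus Sing$ pour épaissir chaque sous-segment $[x^i,x^{i+1}]$ en un voisinage convexe, obtenu comme enveloppe convexe, dans la chambre concernée, de petits voisinages $V_{x^i}$ et $V_{x^{i+1}}$ ; la réunion de ces morceaux contient un voisinage convexe de $[x,y]$ évitant $Sing$, dans lequel tout point $y'$ proche de $y$ se relie à $x$ par un segment. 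Votre argument, lui, n'utilise pas du tout la convexité des chambres : il repose sur le fait qu'un homéomorphisme local injectif sur un compact est injectif sur tout un voisinage de ce compact (énoncé classique, exact sur une variété ; la preuve standard par suites et compacité est d'ailleurs plus sûre que votre esquisse par subdivision et recollement des inverses locaux, qui demande un argument de nombre de Lebesgue lorsque l'arc repasse près de lui-même), puis sur le tiré en arrière par $(\dev|_U)^{-1}$ du segment sphérique perturbé. Ce que chaque approche achète : la vôtre vaut pour toute variété projective munie d'une famille localement finie de murs, sans les hypothèses de convexité du théorème \ref{theo_conv}, et elle est conceptuellement plus propre ; celle de l'article reste intrinsèque (aucune section de $\dev$ n'est construite) et son voisinage convexe du segment prépare les arguments des lemmes \ref{bord} et \ref{lemm_ferme}. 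Un seul point de vigilance : la continuité (et l'unicité) du segment $[\dev(x),\dev(y)]$ de $\S$ en fonction de ses extrémités ne vaut que si $\dev(x)$ et $\dev(y_0)$ ne sont pas antipodaux, c'est-à-dire si $\dev\circ\sigma$ est de longueur strictement inférieure à $\pi$, alors que la définition des segments autorise la longueur $\pi$ ; ce cas dégénéré devrait être exclu ou traité à part dans votre rédaction — notez toutefois que la démonstration de l'article est tout aussi muette sur ce cas limite, où l'épaississement convexe produirait de même des chemins développés de longueur supérieure à $\pi$.
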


\begin{lemm}\label{lemm_ferme}
Les composantes connexes de $\C^*$ sont fermées dans $\C$.
\end{lemm}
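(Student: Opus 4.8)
Le lemme \ref{lemm_ferme} affirme que les composantes connexes de l'ensemble $\C^*$ sont fermées dans la chambre $\C$. Rappelons que $\C^*$ est l'ensemble des points $y \in \C$ que l'on peut joindre au point de base $x$ par un segment tracé dans $\widetilde{\M}$ (sans restriction d'évitement de $Sing$, contrairement à $\C^{reg}$). Le point délicat tient à ce qu'un segment, par définition, est une application $s:[0,1]\to\widetilde{\M}$ dont l'image par $\dev$ est un segment \emph{injectif} de $\S$ de longueur au plus $\pi$ : la propriété de fermeture exige donc qu'une limite de points joignables reste joignable, ce qui revient à contrôler le comportement limite des segments joignants et à exclure qu'ils dégénèrent ou sortent de $\widetilde{\M}$ à la limite.

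**Plan de démonstration.** Le plan est de raisonner sur une composante connexe $Z$ de $\C^*$ et de montrer qu'elle contient ses points d'adhérence situés dans $\C$. Soit donc $y_\infty \in \overline{Z} \cap \C$ et $(y_k)_{k}$ une suite de points de $Z$ convergeant vers $y_\infty$. Pour chaque $k$, fixons un segment $s_k$ reliant $x$ à $y_k$ dans $\widetilde{\M}$. La première étape consiste à projeter la situation dans $\S$ via $\dev$ : puisque $\dev\circ s_k$ est un segment de $\S$ d'extrémités $\dev(x)$ et $\dev(y_k)$, et que $\dev(y_k)\to\dev(y_\infty)$, ces segments convergent (quitte à extraire) vers le segment géodésique $\sigma_\infty$ de $\S$ joignant $\dev(x)$ à $\dev(y_\infty)$. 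Le but est de relever $\sigma_\infty$ en un segment de $\widetilde{\M}$ aboutissant à $y_\infty$.

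**Étape centrale : relèvement de la limite.** La deuxième étape, qui est le cœur de l'argument, est de relever $\sigma_\infty$ dans $\widetilde{\M}$. On utilise le complété $Cl(\widetilde{\M})$ de la remarque \ref{prolongement}, qui est compact et sur lequel $\dev$ se prolonge continûment. On relève $\sigma_\infty$ progressivement : partant de $x$, on suit le relevé dans les chambres successives traversées, en utilisant la convexité des chambres (hypothèse 1), celle des réunions de deux chambres adjacentes (hypothèse 2) et celle des réunions de chambres autour d'un point de bord de mur (hypothèse 4) pour garantir que le chemin reste bien un segment au sens de $\widetilde{\M}$ à chaque franchissement de mur. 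Comme les $s_k$ ont tous $x$ pour origine et sont contenus dans $\widetilde{\M}$, le relevé limite, construit dans le compact $Cl(\widetilde{\M})$, admet une extrémité qui se projette sur $\dev(y_\infty)$ ; puisque $y_\infty\in\C\subset\widetilde{\M}$ et que $\dev$ est un homéomorphisme local, cette extrémité coïncide avec $y_\infty$. On obtient ainsi un segment de $\widetilde{\M}$ reliant $x$ à $y_\infty$, d'où $y_\infty\in\C^*$. Enfin, comme $y_\infty$ se joint à $x$ par un segment et que les $y_k$ lui sont proches dans $\C$, un argument local de convexité (la réunion des chambres rencontrées par $\sigma_\infty$ fournit un voisinage convexe) montre que $y_\infty$ appartient à la même composante connexe $Z$, et pas à une autre composante de $\C^*$.

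**Principal obstacle.** La difficulté majeure est d'assurer que la limite des segments $s_k$ reste un \emph{segment} et non un chemin qui se replie, sort de $\widetilde{\M}$, ou dont l'image par $\dev$ cesse d'être injective ou dépasse la longueur $\pi$. C'est précisément pour contrôler ce phénomène aux intersections de murs que servent les hypothèses de situation diédrale (hypothèse 3) et de convexité au bord (hypothèse 4) : elles empêchent le relevé limite d'avoir un comportement singulier lors du passage par les points d'intersection de murs ou par les points du bord $\partial W\subset Cl(\widetilde{\M})$. Le recours au complété $Cl(\widetilde{\M})$ est ce qui rend l'argument de compacité rigoureux, en fournissant un cadre où la suite de relevés converge effectivement.
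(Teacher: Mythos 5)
Votre proposition identifie correctement l'obstacle (la dégénérescence possible des segments $s_k$ à la limite), mais elle ne le résout pas : votre \og étape centrale \fg\ est affirmée plutôt que démontrée, et c'est précisément là que réside toute la difficulté du lemme. Relever le segment limite $\sigma_\infty$ de chambre en chambre n'a rien d'automatique pour un homéomorphisme local : le relevé peut cesser d'exister avant l'extrémité, ce qui se produit exactement lorsque les points d'intersection $x_n^i$ des segments $s_n$ avec les murs divergent dans $\widetilde{\M}$ tout en convergeant dans $Cl(\widetilde{\M})$ vers un point du bord. Dire que les hypothèses 3 et 4 \og empêchent le relevé limite d'avoir un comportement singulier \fg\ ne constitue pas un argument. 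La démonstration du texte traite ce point par une trichotomie explicite sur les suites de points de croisement $(x_n^i)_{n \in \N}$ --- toutes divergent ; une première d'indice $i_0 \geqslant 2$ diverge alors que la précédente converge dans $\widetilde{\M}$ ; toutes convergent dans $\widetilde{\M}$ --- et elle élimine les deux premiers cas grâce au lemme \ref{bord} : si $(x_n^1)_{n \in \N}$ s'échappe vers $x_\infty^1 \in Cl(\widetilde{\M}) \setminus \widetilde{\M}$, alors (situation diédrale via le lemme \ref{lemm_const}, plus la convexité des réunions de deux chambres adjacentes) la liste des murs traversés est finalement constante et finie, et \emph{toutes} les suites $(x_n^i)_{n \in \N}$ convergent vers le \emph{même} point $x_\infty^1$ ; l'hypothèse 4 (convexité de la réunion des chambres contenant $x_\infty^1$ dans leur adhérence) force alors $y_n \to x_\infty^1 \notin \widetilde{\M}$, ce qui contredit $y_n \to y_\infty \in \C$. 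Rien d'équivalent ne figure dans votre esquisse, or c'est ce mécanisme (échappement d'un point de croisement entraîne l'échappement de l'extrémité $y_n$) qui fait tout le travail. Une fois les cas d'échappement exclus, la reconstruction du segment limite morceau par morceau, recollée grâce à l'hypothèse 2, est bien celle que vous décrivez.

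Seconde lacune, plus ponctuelle mais réelle : votre identification de l'extrémité du relevé avec $y_\infty$ (\og puisque $y_\infty \in \C \subset \widetilde{\M}$ et que $\dev$ est un homéomorphisme local, cette extrémité coïncide avec $y_\infty$ \fg) est un non sequitur. L'application $\dev$ n'est pas supposée globalement injective --- son injectivité est précisément la conclusion visée du théorème de convexité --- de sorte que l'extrémité d'un relevé abstrait de $\sigma_\infty$ pourrait a priori être un autre point de même image que $y_\infty$, voire un point de $Cl(\widetilde{\M}) \setminus \widetilde{\M}$. Pour conclure, il faut comparer le relevé aux segments $s_n$ eux-mêmes : montrer que tous les points de croisement $x_n^i$ convergent \emph{dans} $\widetilde{\M}$, puis recoller les segments limites entre points de croisement consécutifs jusqu'à $y_\infty = \lim y_n$, la restriction à chaque réunion de deux chambres adjacentes étant un segment par l'hypothèse 2 ; c'est exactement le troisième cas de la démonstration du texte, auquel on revient donc inévitablement.
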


\begin{coro}\label{coro_conv}
Si $\C^{reg}$ est non vide alors $\C^* = \C$.
\end{coro}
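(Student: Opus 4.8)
The statement is a connectedness argument, and the plan is to exhibit a subset of $\C$ that is simultaneously open, closed and nonempty, hence equal to $\C$, and contained in $\C^*$. Recall that $\C$, being the closure of a connected component of the complement of a locally finite family of walls, is connected, and that we have the inclusions $\C^{reg}\subseteq\C^*\subseteq\C$ with $\C^{reg}$ open in $\C$ (Lemma \ref{lemm_ouvert}) and nonempty by hypothesis. I would fix a point $z_0\in\C^{reg}$ and let $Z$ denote the connected component of $\C^*$ containing $z_0$; the goal then becomes to prove $Z=\C$.

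First, $Z$ is closed in $\C$: this is exactly the content of Lemma \ref{lemm_ferme}. It remains to prove that $Z$ is open in $\C$, which is the heart of the matter. For a point $z\in Z\cap\C^{reg}$ this is immediate: by Lemma \ref{lemm_ouvert} there is a connected neighbourhood $U$ of $z$ in $\C$ with $U\subseteq\C^{reg}\subseteq\C^*$; since $U$ is connected and meets the component $Z$, it is entirely contained in $Z$, so $z$ is interior to $Z$. Thus $Z$ is open at every point of $\C^{reg}$.

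The main obstacle is openness at a point $z\in Z\setminus\C^{reg}$, that is, a point that can be joined to $x$ only by a segment meeting the singular set $Sing$ (in particular a point of $Z$ lying on $Sing$ itself, where no segment of $\widetilde{\M}-Sing$ can end). To treat such a point I would deform the joining segment: the developing map sends $[x,z]$ to an injective arc of length at most $\pi$ in $\S$, and injectivity is an open condition as the endpoint varies, so for every $w\in\C$ close to $z$ the developed arc from $\dev(x)$ to $\dev(w)$ is still injective; the only obstruction to lifting it back to a segment of $\widetilde{\M}$ joining $x$ to $w$ is the way it meets the wall–intersections. This is precisely where hypothesis (3) of Theorem \ref{theo_conv} enters: since every nonempty intersection of two walls lies in a \emph{situation diédrale}, and since the number and the set of walls crossed is locally constant along regular segments (Lemma \ref{lemm_const}), the crossing combinatorics is stable under this deformation, so the perturbed segments still develop injectively and lift. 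Hence a whole neighbourhood of $z$ lies in $\C^*$, and, by the connectedness argument of the previous paragraph, in $Z$; so $z$ is interior to $Z$.

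Combining the two cases, $Z$ is open in $\C$. Being also closed and nonempty in the connected set $\C$, it equals $\C$; since $Z\subseteq\C^*\subseteq\C$ we conclude $\C^*=\C$. The delicate point, which I expect to absorb all the geometric input, is the openness at the singular points of $Z$: everything there rests on the diedral structure and on the local constancy of the crossing data, the two features that keep the developing map injective as the endpoint is dragged across the high–codimension locus $Sing$.
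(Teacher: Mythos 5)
Your skeleton (a nonempty subset of the connected set $\C$ that is open and closed) is sound, and you attribute closedness (Lemma \ref{lemm_ferme}) and openness at regular points (Lemma \ref{lemm_ouvert}) correctly; but the step you yourself flag as the heart of the matter --- openness of $Z$ at a point $z \in Z \setminus \C^{reg}$ --- is a genuine gap, and it would fail as sketched, for two reasons. First, $\dev$ is only a local homeomorphism, not a covering map, so the great-circle arc in $\S$ from $\dev(x)$ to $\dev(w)$ need not lift to $\widetilde{\M}$ at all: a partial lift can escape in finite time to $Cl(\widetilde{\M}) \setminus \widetilde{\M}$. It is hypothesis (4) of the theorem, through Lemma \ref{bord}, that rules out such escapes, and those tools produce exactly the closedness statement of Lemma \ref{lemm_ferme}, not an openness statement. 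Second, and more decisively, Lemma \ref{lemm_const} and the dihedral hypothesis (3) only control crossings through \emph{pairwise} wall intersections, which have codimension $2$; but every segment from $x$ to such a $z$ meets $Sing$, a locally finite union of codimension-$3$ strata where at least three walls meet, and there the hypotheses of the theorem say nothing about the local combinatorics. Asserting that "the crossing combinatorics is stable under this deformation" is therefore unjustified precisely at the one locus where you need it --- which is why $\C^{reg}$ was defined by avoiding $Sing$ in the first place.

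The paper's proof never establishes openness of $\C^*$ and instead circumvents $Sing$ by a density argument. Fixing a component $A^{reg}$ of $\C^{reg}$ and the component $A^*$ of $\C^*$ containing it, one uses Lemma \ref{lemm_const} (the \emph{set} of walls crossed is constant on $A^{reg}$) and the fact that $Sing$ has codimension $3$ to produce a closed set $S \subset \C$ of codimension $2$ --- in effect the shadow of $Sing$ seen from $x$ --- with $A^{reg} \cap S^c = A^* \cap S^c$. This set is open in $S^c$ (Lemma \ref{lemm_ouvert}) and closed in $S^c$ (Lemma \ref{lemm_ferme}); since $S$ has codimension $2$ and $\C$ is convex, $S^c$ is connected and dense in $\C$, so $A^* \cap S^c = \C \cap S^c$, and then the closedness of $A^*$ alone upgrades this to $A^* = \C$. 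In short, where you try to drag a segment across the high-codimension locus, the paper replaces openness at singular points by "open and closed off a codimension-$2$ set, plus closed everywhere, plus density", which requires no control whatsoever near $Sing$. Your argument becomes correct if you substitute this density step for your deformation step.
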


Commençons par montrer que ces lemmes entrainent le corollaire \ref{coro_conv} et le lemme \ref{lemm_conv}.

\begin{proof}[Démonstration du corollaire \ref{coro_conv} à l'aide des lemmes \ref{lemm_ouvert} et \ref{lemm_ferme}]
Nous allons montrer que $\C^*$ est dense dans $\C$. Soit $y \in \C^{reg}$, on note $A^{reg}$ une composante connexe de $\C^{reg}$. Le nombre de murs traversés par les segments $[x,y]$ pour $y \in A^{reg}$ ne dépend pas de $y$ (lemme \ref{lemm_const}). On note $A^*$ la composante connexe de $\C^*$ contenant $A^{reg}$. Par hypothèse, $A^*$ est fermé dans $\C$.

Les murs forment une famille localement finie dans $\widetilde{\M}$, par conséquent $Sing$ est une réunion localement finie de sous-variétés totalement géodésique de codimension 3.

Comme \underline{l'ensemble} des murs traversées par tout segment $[x,y]$ est constant pour $y \in A^{reg}$, il existe un fermé $S$ de codimension 2 de $\C$ tel que $A^{reg} \cap S^c = A^* \cap S^c$ ($S^c = \C \setminus S$).  Ainsi, les lemmes \ref{lemm_ouvert} et \ref{lemm_ferme} montrent que $A^{reg} \cap S^c = A^* \cap S^c$ est ouvert et fermé dans $S^c$. Or, $S$ est de codimension 2 par conséquent $S^c$ est connexe puisque $\C$ est convexe.

Ensuite, $S^c$ est dense dans $\C$ toujours car $S$ est de codimension 2, par conséquent $\C^{reg}$ est non vide si et seulement si $\C^{reg} \cap S^c$ est non vide puisque $\C^{reg}$ est ouvert.

Il vient donc que $A^{reg} \cap S^c = A^* \cap S^c = \C \cap S^c$. Le lemme \ref{lemm_ferme} montre que $A^* = \C$, puisque $S^c$ est dense dans $\C$. Il vient que si $\C^{reg}$ est non vide alors $\C^* = \C$.
\end{proof}

\begin{proof}[Démonstration du lemme \ref{lemm_conv} à l'aide du corollaire \ref{coro_conv}]
A présent, pour montrer que $\widetilde{\M}$ est convexe, il suffit de choisir un point $x$ de $\widetilde{\M}$.  On dira que deux points $x$ et $y$ sont à distance combinatoire inférieure ou égale à $n$ si et seulement s'il existe une suite de $n$ chambres $C_1,...,C_n$ tel que $x \in C_1$, $y \in C_n$, $C_i$ et $C_{i+1}$ sont adjacentes.  

Le corollaire \ref{coro_conv} montre facilement à l'aide d'une récurrence que la réunion des chambres à distance combinatoire inférieure ou égale à $n$ de $x$ est étoilé par rapport à $x$. L'ensemble $\widetilde{\M}$ est donc convexe.
\end{proof}

\subsection{Démonstration du lemme \ref{lemm_ouvert}}


\begin{proof}[Démonstration du lemme \ref{lemm_ouvert}]
Soit $y \in \C^{reg}$, il existe un segment $s$ reliant $x$ \`a $y$ dans $\widetilde{\M} \setminus Sing$. Comme l'image d'un segment est compact elle est incluse dans un nombre fini de chambres de $\widetilde{\M}$. On note $(x^i)_{i=1...N}$ les points d'intersections du segment $[x,y]$ avec les murs de $\widetilde{\M}$ num\'erot\'es via la param\'etrisation de $[x,y]$. On pose $x^0=x$ et $x^{N+1}=y$.

Comme les chambres sont convexes et $\widetilde{\M} \setminus Sing$ est ouvert, il existe des voisinages $V_{x^i}$ de $x^i$ (pour $i=1,...,N+1$) dans  $\widetilde{\M}$ tels que l'enveloppe convexe dans chaque chambre des couples $(V_{x^i},V_{x^{i+1}})$ contienne un voisinage convexe du segment $[x^i,x^{i+1}]$ inclus dans la chambre contenant $[x^i,x^{i+1}]$ et qui ne rencontre pas $Sing$. La réunion de ces voisinages contient un voisinage convexe de $[x,y]$. Il existe donc un voisinage ouvert du point $y$ dans $\C^{reg}$.
\end{proof}

\subsection{D\'emonstration du lemme \ref{lemm_ferme}}

%
%
$\,$
\par{
L'énoncé du lemme suivant est assez technique.
}
\begin{lemm}\label{bord}
Soit $(s_n)_{n \in \N}$ une suite de segments d'extrémités le point $x$ et un point  $y_n$ appartenant à une composante connexe de  $\C^*$ fixée. Le segment $s_n$ traverse $N_n$ murs $W_1^n,...,W^n_{N_n}$ de $\widetilde{\M}$ ( ordonnés par la paramétrisation de $s_n$). On note $(x_n^{i})_{i=1...N_n}$ les points d'intersection de $s_n$ avec les murs de  $\widetilde{\M}$ ( ordonnés via la paramétrisation de $s_n$). Si la suite $(x_n^1)_{n \in \N}$ diverge dans $\widetilde{\M}$ et converge dans $Cl(\widetilde{\M})$ vers $x^1_{\infty}$, alors, si $n$ est assez grand, la suite $N_n$ est constante égale à un certain $N$, les suites $W^n_1,...,W^n_{N}$ sont constantes et les suites $(x_n^i)_{n \in \N}$ pour $i=2...N$ diverge dans $\widetilde{\M}$ et converge dans $Cl(\widetilde{\M})$ vers $x^1_{\infty}$.
\end{lemm}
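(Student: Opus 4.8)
L'idée est de tout lire dans l'application développante et d'exploiter l'hypothèse de convexité à l'infini (hypothèse (4) du théorème~\ref{theo_conv}). Comme $Cl(\widetilde{\M})$ est compact et $\dev$ continue, je poserais $q_1 = \dev(x^1_{\infty})$, de sorte que $\dev(x_n^1) \to q_1$. Puisque chaque $x_n^1$ appartient à un mur et tend vers $x^1_{\infty}$, le point $x^1_{\infty}$ est un point du bord situé dans l'adhérence de murs : c'est exactement le cadre de l'hypothèse (4). J'introduirais alors la réunion $U$ des chambres dont l'adhérence dans $Cl(\widetilde{\M})$ contient $x^1_{\infty}$. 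Par l'hypothèse (4), $U$ est convexe ; comme dans la déduction du théorème~\ref{theo_conv} à partir du lemme~\ref{lemm_conv}, $\dev$ réalise un homéomorphisme de $U$ sur un ouvert convexe $\Omega_U$ de $\S$, et $q_1$ appartient à la frontière de $\Omega_U$. Tout mur $W$ passant par $x^1_{\infty}$ se développe en un hyperplan contenant $q_1$, car $x^1_{\infty} \in \overline{W}$ entraîne $q_1 \in \overline{\dev(W)}$ ; dans le cas d'un cusp, ces hyperplans partagent de plus le sous-espace de codimension $2$ image de leur intersection commune.

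Le cœur de la preuve serait alors la convergence de \emph{toutes} les intersections. Je m'appuierais sur l'observation élémentaire suivante : si une corde d'un convexe a une extrémité qui tend vers un point frontière $q$, alors elle rencontre tout hyperplan fixé passant par $q$ en un point qui tend vers $q$ --- à la limite, la corde issue de $q$ quitte aussitôt l'hyperplan. Je l'appliquerais par récurrence sur les murs traversés : la première intersection $x_n^1$ tend vers $x^1_{\infty}$ par hypothèse ; le segment qui la suit repart d'un point voisin de $x^1_{\infty}$, où, par finitude locale des murs et la structure donnée par l'hypothèse (4), les seuls murs rencontrés sont ceux passant par $x^1_{\infty}$. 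Lue dans $\Omega_U$, l'observation ci-dessus montre que l'intersection suivante tend encore vers $q_1$, donc $x_n^2 \to x^1_{\infty}$ puisque $\dev$ est injective sur $U$, et de proche en proche. Le segment ne pourrait quitter $U$ qu'en traversant un mur ne passant pas par $x^1_{\infty}$, ce qui est exclu au voisinage de $x^1_{\infty}$ : toutes les intersections $x_n^i$ tendraient donc vers $x^1_{\infty}$, et l'on obtiendrait au passage que $y_n \to x^1_{\infty}$.

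Il resterait la stabilisation combinatoire. Les murs traversés passant tous par $x^1_{\infty}$, ils sont naturellement ordonnés (indexés par $\Z$ dans le cas d'un cusp), et l'ensemble des murs séparant la chambre contenant le point fixe $x$ de la chambre contenant $y_n$ forme un bloc d'indices consécutifs, déterminé par les positions combinatoires des deux extrémités. La chambre de $x$ étant fixe et celle de $y_n$ restant dans une composante connexe fixée de $\C^*$, le lemme~\ref{lemm_const} joint à l'hypothèse de situation diédrale (hypothèse (3)) montrerait que le nombre et la liste ordonnée des murs traversés sont localement constants, donc stationnaires pour $n$ assez grand : la suite $N_n$ serait constante égale à un certain $N$ et les suites $W_1^n, \dots, W_N^n$ stationnaires. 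L'unicité des limites ainsi forcées permettrait de passer des sous-suites à la suite entière.

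Le principal obstacle sera le cas d'un cusp, où une infinité de murs passent par $x^1_{\infty}$ : il faudra vérifier soigneusement que le segment n'en traverse qu'un nombre fini, que ce nombre et la liste se stabilisent, et surtout que le segment ne s'échappe pas de $U$ vers une autre région du bord de $Cl(\widetilde{\M})$. Tous ces points reposent de façon essentielle sur la convexité à l'infini fournie par l'hypothèse (4) et sur la structure en éventail des murs paraboliques au voisinage du cusp.
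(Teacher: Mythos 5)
There is a real gap in your propagation step, and it is compounded by an ordering problem. Your induction needs each wall crossed after $x_n^1$ to develop into a hyperplane through $q_1$, and you justify this by claiming that near $x^1_{\infty}$ the only walls one can meet are those passing through $x^1_{\infty}$, \og par finitude locale des murs \fg. But local finiteness of the wall family holds in $\widetilde{\M}$ only; the point $x^1_{\infty}$ lies in $Cl(\widetilde{\M}) \setminus \widetilde{\M}$, and nothing prevents infinitely many walls whose closures avoid $x^1_{\infty}$ from accumulating on it. Moreover, at the moment you apply the chord observation you have not yet stabilized the list of crossed walls --- you defer that to the end, via le lemme \ref{lemm_const} --- so the wall $W_2^n$ may a priori vary with $n$ and your \og hyperplan fixé passant par $q_1$ \fg{} is not available. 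As written the argument is circular: you need $x_n^2 \to x^1_{\infty}$ to know that $W_2$ passes through $x^1_{\infty}$, and you need $W_2$ to pass through $x^1_{\infty}$ to prove $x_n^2 \to x^1_{\infty}$. You yourself flag the cusp case as \og le principal obstacle \fg; it is exactly where this step breaks.

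The paper's proof avoids both difficulties by reversing the order and using weaker hypotheses. It first invokes the dihedral hypothesis through le lemme \ref{lemm_const}: since the $y_n$ stay in a fixed connected component, the number and the ordered list of crossed walls are constant in $n$ --- your \og stabilisation combinatoire \fg, done first, not last. Then, with $W_1$ and $W_2$ fixed, it propagates convergence using only hypothesis (2) of the theorem: the segment $[x, x_n^2]$ lies in the convex union of the two chambers adjacent along $W_1$; reading this union in the developing map, $\dev(x)$ is an interior point of a fixed convex set while $q_1 = \lim \dev(x_n^1)$ lies on its boundary, so any accumulation point of $\dev(x_n^2)$, which sits on the limit ray beyond $q_1$, would place $q_1$ on an open segment from an interior point to a point of the closure --- impossible for a convex set. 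Hence $x_n^2 \to x^1_{\infty}$, and one iterates wall by wall. In this route the fact that the later walls pass through $x^1_{\infty}$ is a conclusion, not an input, and hypothesis (4) --- which the paper reserves for le lemme \ref{lemm_ferme}, where your side claim $y_n \to x^1_{\infty}$ actually belongs --- is never needed here. Your convex $U$ could replace the union of two adjacent chambers once the stabilization is performed first, so your approach is repairable; but as proposed, the local-finiteness-at-infinity step is invalid and the induction does not close.
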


\begin{proof}[D\'emonstration du lemme \ref{bord}]
Comme toutes les intersections de murs sont incluses dans une situation diédrale, le lemme \ref{lemm_const} montre que l'ensemble des murs traversés est constant et donc fini. Par conséquent, on peut supposer que les segments $s_{n}$ traversent $N$ murs $\widetilde{\M}$ et que la suite $W_1^n,...,W^n_{N}$ est constante. La suite de segments $[x,x_{n}^2]$ est incluse dans la réunion de deux chambres adjacentes : $\C_1$ et $\C'$ adjacentes au mur $W_{1}$. La réunion de ces deux chambres est convexe et la suite $x_{n}^1$ converge vers le point $x_{n}^1$ de $Cl(\widetilde{\M})$. Par suite, la suite $x_{n}^2$ diverge dans $\widetilde{\M}$ et converge dans $Cl(\widetilde{\M})$ vers $x^1_{\infty}$.

On itère ce raisonnement pour obtenir la conclusion pour toutes les suites $(x_n^i)_{n \in \N}$ pour $i=2...N$.
\end{proof}

\begin{proof}[D\'emonstration du lemme \ref{lemm_ferme}]
Soit $y_{n}$ une suite de points d'une composante connexe de $\C^*$. Supposons que la suite $(y_n)_{n \in \N}$ converge dans $\widetilde{\M}$ vers un point $y_{\infty}$. Nous allons montrer qu'il existe un segment entre $x$ et $y_{\infty}$. Il existe un segment $s_{n}$ reliant $x$ \`a $y_n$ dans $\widetilde{\M}$. On vient de voir (lemme \ref{bord}) que les segments $s_{n}$ traversent $N$ murs.
On note $(x_n^i)_{i=1...N}$ les points d'intersections du segment $s_{n}$ avec les murs de $\widetilde{\M}$ num\'erot\'es via la param\'etrisation de $s_{n}$. On pose $x_n^0=x$ et $x_n^{N+1}=y_n$.

On a trois cas \`a distinguer:
\begin{itemize}
\item Toutes les suites $(x_n^{i})_{n \in \N}$ divergent. En particulier, la suite $(x_n^1)_{n \in \N}$ diverge dans $X$, quitte \`a extraire, on peut supposer qu'elle converge dans $Cl(\widetilde{\M})$.

\item Il existe un $i_0=2,...,N$ tel que la suite $(x_n^{i_0})_{n \in \N}$ diverge dans $\widetilde{\M}$ mais sous-converge dans $Cl(\widetilde{\M})$ et la suite $(x_n^{i_0-1})_{n \in \N}$ converge dans $\widetilde{\M}$.

\item Toutes les suites $(x_n^{i})_{n \in \N}$ convergent dans $\widetilde{\M}$.
\end{itemize}
Nous allons montrer que les deux premiers cas sont absurdes.
Le lemme \ref{bord} montre que dans le premier cas les suites $(x_n^i)_{n \in \N}$ pour $i=1...N$ converge vers un point $x_{\infty}^1$ de $Cl(\widetilde{\M}) - \widetilde{\M}$.

La quatrième hypothèse du théorème \ref{theo_conv} affirme que la réunion des chambres contenant le point $x_{\infty}^1$ dans leur adhérence dans $Cl(\widetilde{\M})$ est convexe.

Par cons\'equent la suite $y_{n}$ converge $x^1_{\infty}$ qui n'est pas dans $\widetilde{\M}$.  Ce qui est absurde. L'absurdit\'e du second cas se d\'emontre exactement de la m\^eme mani\`ere.

Par suite, toutes les suites $(x_n^{i})_{n \in \N}$ convergent dans $\widetilde{\M}$. Il vient que les points $x$ et $y_{\infty}$ sont reli\'es par une r\'eunion finie de segments qui v\'erifie de plus que sa restriction \`a la r\'eunion de deux chambres adjacentes est un segment. Ce chemin est donc un segment.
\end{proof}

\subsubsection{Démonstration du théorème \ref{theo_kapo}}

Pour terminer la démonstration du théorème \ref{theo_kapo}, il faut montrer que dans le cas qui nous intéresse la structure projective convexe est proprement convexe. Ce n'est pas très difficile. C'est une conséquence de la proposition suivante et du fait que les représentations $\rho_t$ sont irréductibles.

\begin{prop}
L'holonomie d'une structure projective convexe non proprement convexe n'est pas irréductible.
\end{prop}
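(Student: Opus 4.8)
The plan is to work on the projective sphere $\S$, as in the rest of this section, and to extract from the developing image a canonical proper nonzero linear subspace of $\R^{n+1}$ invariant under the holonomy; such a subspace is precisely a witness that $\rho$ fails to be irreducible. Let $(\dev,\rho)$ be a developing--holonomy pair for the given convex projective structure on $M$, so that $\dev$ is a homeomorphism from $\widetilde{M}$ onto a convex open set $\O \subset \S$, and put $\G = \rho(\pi_1(M))$. By equivariance $\G$ preserves $\O$; moreover every $g \in \G \subset \ss$ acts linearly on $\R^{n+1}$ and permutes half-lines, so $g$ preserves the open convex cone $\pi^{-1}(\O)$. It therefore suffices to produce a $\G$-invariant subspace $L$ with $\{0\} \subsetneq L \subsetneq \R^{n+1}$.

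First I would set $\overline{C} = \overline{\pi^{-1}(\O) \cup \{0\}}$, the closed convex cone over $\O$, and let $L = \overline{C} \cap (-\overline{C})$ be its lineality space, that is, the largest linear subspace contained in $\overline{C}$; this is the maximal direction $\overrightarrow{E_{\O}}$ attached to $\O$ in Remark \ref{struct_conv}. Since $\G$ preserves $\pi^{-1}(\O)$ it preserves its closure $\overline{C}$, and because the antipodal map $v \mapsto -v$ commutes with the linear action of $\G$, the subspace $L$ is $\G$-invariant. The statement is thereby reduced to two elementary inclusions about $\O$.

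Second, I would check $L \neq \{0\}$ from the hypothesis that $\O$ is \emph{not} properly convex. By the definition recalled in this section, this means that the closure $\overline{\O}$ in $\S$ contains two diametrically opposed points $u$ and $-u$. Then $u, -u \in \overline{C}$, and since $\overline{C}$ is a convex cone it contains the whole line $\R u$; hence $u \in L$ and $L \neq \{0\}$. Third, I would check $L \neq \R^{n+1}$: assuming $\O \neq \S$, Remark \ref{struct_conv} places $\O$ inside an affine chart, i.e. inside an open half-sphere $\{\,\pi(v) : \langle v,w\rangle > 0\,\}$ for some $w \neq 0$; consequently $\overline{C} \subset \{v : \langle v,w\rangle \geq 0\} \neq \R^{n+1}$, so the subspace $L \subseteq \overline{C}$ cannot be all of $\R^{n+1}$. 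Combining the two, $L$ is a proper nonzero $\G$-invariant subspace, whence $\rho$ is reducible. (The degenerate case $\O = \S$ does not arise for a genuine convex projective structure and may be set aside.)

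The argument is essentially soft, so I expect no computational obstacle; the single delicate point is conceptual, namely using the \emph{closed} cone rather than the open one to define $L$. This matters exactly in the boundary-touching situation, whose model is an open half-sphere (an affine chart): there the lineality of the open cone is trivial while $\O$ is already not properly convex, so only the closed cone detects the correct invariant direction $\overrightarrow{E_{\O}}$. Guaranteeing that this $L$ is at once nonzero (from non-proper convexity) and proper (from $\O \neq \S$) is therefore the crux of the proof, and both facts follow from the two inclusions above.
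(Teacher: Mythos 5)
Your proof is correct, and its overall strategy --- extracting from the developed image a canonical invariant linear subspace witnessing reducibility --- is the same as the paper's. But your realization of that idea is genuinely different, and in fact more careful. The paper's proof is one line: by Remark \ref{struct_conv}, the holonomy preserves the vector span of $\O \cap -\O$, and it asserts that a convex structure is non properly convex if and only if this intersection is nonempty. With the paper's own definition of proper convexity (the \emph{closure} $\overline{\O}$ contains no antipodal pair), that equivalence fails in boundary-touching cases: for $\O$ an open half-sphere (i.e.\ an affine chart of $\S$) one has $\O \cap -\O = \varnothing$, so the span is $\{0\}$ and the paper's argument produces no invariant subspace, even though $\O$ is not properly convex. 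Your lineality space $L = \overline{C} \cap (-\overline{C})$ of the \emph{closed} cone detects exactly these cases (for the half-sphere bounded by $w^{\perp}$, $L$ is the hyperplane $w^{\perp}$ itself), and you correctly flagged the open-versus-closed cone distinction as the crux. Two minor remarks: your parenthetical identification of $L$ with the direction $\overrightarrow{E_{\O}}$ of Remark \ref{struct_conv} is inaccurate in precisely those boundary cases ($\overrightarrow{E_{\O}}$ is generated by the \emph{open} intersection $\O \cap -\O$, which is trivial for the half-sphere), but this is harmless since you prove the three needed properties of $L$ --- invariance, $L \neq \{0\}$, $L \neq \R^{n+1}$ --- directly and never rely on that identification; and your construction is robust even against the worry that a holonomy element might exchange the two lifts to $\S$ of a convex set of $\PP^n$, since such an exchange swaps $\overline{C}$ and $-\overline{C}$ and therefore still preserves $L$. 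In sum: same idea as the paper, but your closed-cone version actually covers cases the paper's one-line proof leaves out, and this extra generality is needed if one wants the proposition to apply to an arbitrary a priori convex developed image.
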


\begin{proof}
La remarque \ref{struct_conv} montre que l'holonomie d'une structure projective convexe doit préserver l'espace vectoriel engendré par l'intersection $\O \cap -\O$. Une structure projective convexe est non proprement convexe si et seulement si cette intersection est non vide.
\end{proof}

\begin{proof}[Démonstration du théorème \ref{theo_kapo}]
Il nous reste à montrer que l'application  $\dev_t :\O_0 \rightarrow \O_t$ se prolonge de façon unique en un homéomorphisme  $\pi_1(\M)$-équivariant: $\dev_t : \overline{\O_0} \rightarrow \overline{\O_t}$ qui induit un homéomorphisme $\dev_t : \partial \O_0 \rightarrow \partial \O_t$.

Pour cela, on identifie $\widetilde{\M}$ avec $\O_0$, et il faut montrer que l'on peut identifier $Cl(\widetilde{\M})$ et $\overline{\O_0}$. L'espace $Cl(\widetilde{\M})$ est le complété de $\widetilde{\M}$ muni de la distance qui fait de l'application $\dev_t :\widetilde{\M} \rightarrow \S$ une isométrie locale. Les chambres de $\widetilde{\M}$ forment une partition de $\widetilde{\M}$ en partie convexe. L'application $\dev_t$ restreinte aux chambres de $\widetilde{\M}$ est une application projective. L'adhérence d'une chambre de $\widetilde{\M}$ pour la métrique induite par $\dev_t$ correspond donc à l'adhérence d'une chambre de $\O_0$ dans $\S$.

La remarque \ref{prolongement} montre que l'application $\dev_t$ se prolonge en une application continue de $Cl(\widetilde{\M}) = \overline{\O_0}$ vers $\S$. Cette application restreinte à $\O_0$ est un homéomorphisme sur son image et son image est le convexe $\O_t$ par le théorème \ref{theo_conv}.

Le prolongement de $\dev_t$ est donc un homéomorphisme $\overline{\O_0} \rightarrow \overline{\O_t}$, $\pi_1(\M)$-équivariant qui induit un homéomorphisme $\dev_t : \partial \O_0 \rightarrow \partial \O_t$.
\end{proof}

\section{Irréductibilité, Zariski-Densité et Minimalité} \label{sec_zar}

\subsection{Irr\'eductibilit\'e}

\begin{defi}
Soit $\Gamma$ un sous-groupe de $\ss$. On dira que $\G$ est \emph{irréductible} ou que $\G$ \emph{agit de façon irréductible} sur $\R^{n+1}$ lorsque les seuls sous-espaces vectoriels de $\R^{n+1}$ invariant par $\G$ sont $\{0 \}$ et $\R^{n+1}$.
\end{defi}

\begin{defi}
Soit $\Gamma$ un sous-groupe discret et infini de $\SO$. On se donne $x_{0}$ un point de $\H$ et on d\'efinit l'ensemble:
$$
L_{\Gamma}^{x_{0}}= \{  x_{\infty} \in \partial \H \textrm{ tel qu'il existe une suite d'éléments } \g_{n} \in \Gamma \textrm{ tel que } \g_{n} \cdot x_{0} \underset{n \rightarrow \infty}\rightarrow x_{\infty}  \}
$$

Cet ensemble ne d\'epend pas du point $x_{0}$. On l'appele \emph{l'ensemble limite} de $\Gamma$ et on le note $L_{\Gamma}$. Lorsque $\G$ n'est pas virtuellement abélien c'est le plus petit fermé non vide invariant par $\Gamma$ et c'est l'adhérence des points attractifs des éléments de $\Gamma$. On pourra consulter le livre (\cite{MR2249478}) pour avoir des détails.
\end{defi}

\begin{lemm}\label{lem_irr_SO}
Soit $\Gamma$ un sous-groupe de $\SO$. Si $\Gamma$ n'est pas irréductible alors l'ensemble limite de $\Gamma$ est inclus dans un hyperplan de $\partial \H$.
\end{lemm}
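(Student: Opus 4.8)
The plan is to read off a proper projective subspace containing the limit set directly from a $\Gamma$-invariant subspace, distinguishing cases according to the behaviour of the quadratic form $q$ (of signature $(n,1)$) on that subspace. Recall that $\H$ is the projectivization of the negative cone $\{q<0\}$ and $\partial\H$ the projectivization of the light cone $\{q=0\}$; a hyperplane of $\partial\H$ means $H\cap\partial\H$ for a projective hyperplane $H$ of $\PP^n$. Since $L_{\Gamma}\subset\partial\H$ always, it is enough to produce one proper projective subspace of $\PP^n$ containing $L_{\Gamma}$. I assume $\Gamma$ infinite (otherwise $L_{\Gamma}=\varnothing$) and let $V$ be a $\Gamma$-invariant subspace with $0\neq V\neq\mathbb{R}^{n+1}$.

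First I would treat the case where $q|_{V}$ is non-degenerate. Then $\mathbb{R}^{n+1}=V\oplus V^{\perp}$ is a $\Gamma$-invariant orthogonal decomposition, and since $q$ has a single negative direction exactly one of the two factors, say $S$, is positive definite; as $V$ is proper, $\dim S=s\geqslant 1$, and $S^{\perp}$ has signature $(n-s,1)$. The group $\Gamma$ preserves the totally geodesic subspace $Y=\PP(S^{\perp})\cap\H\cong\H^{\,n-s}$, so choosing the basepoint $x_{0}\in Y$ forces the whole orbit, hence all of its accumulation points, into $\overline{Y}$; thus $L_{\Gamma}\subseteq\PP(S^{\perp})\cap\partial\H$. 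Since $\dim\PP(S^{\perp})=n-s\leqslant n-1$, this proper projective subspace lies in a hyperplane, and the independence of $L_{\Gamma}$ from $x_{0}$ closes the case.

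Next I would treat the case where $q|_{V}$ is degenerate. The radical $V\cap V^{\perp}$ is then a non-zero isotropic $\Gamma$-invariant subspace; as $q$ has signature $(n,1)$ its Witt index is $1$, so this radical is an isotropic line $L$, corresponding to a single point $\xi\in\partial\H$ fixed by $\Gamma$. Hence $\Gamma$ lies in the stabiliser $\mathrm{Stab}_{\SO}(\xi)$, which is amenable (solvable-by-compact, of the form $(\mathbb{R}^{\,n-1}\rtimes\mathrm{SO}_{n-1}(\R))\rtimes\R_{+}^{*}$). A discrete subgroup of such a group is virtually solvable, hence elementary, so $L_{\Gamma}$ consists of at most two points: the fixed point $\xi$ and, if $\Gamma$ contains a hyperbolic element fixing $\xi$, the second endpoint $\eta$ of its axis. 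As $n\geqslant 2$, any two points of $\PP^n$ span a projective line, which lies in a hyperplane, so again $L_{\Gamma}$ is contained in a hyperplane of $\partial\H$.

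The main obstacle is the degenerate case. There it is tempting to use the $\Gamma$-invariant hyperplane $\PP(L^{\perp})$, but $\PP(L^{\perp})$ is tangent to $\partial\H$ and meets it only at $\xi$, so it captures $L_{\Gamma}$ only when $\Gamma$ is purely parabolic; the loxodromic second fixed point $\eta$ lies off $\PP(L^{\perp})$. The honest input is therefore the fact that a discrete group fixing a boundary point is elementary (equivalently, amenability of $\mathrm{Stab}_{\SO}(\xi)$ together with discreteness), which bounds $\#L_{\Gamma}$ by $2$; the passage from two points to a hyperplane is then immediate.
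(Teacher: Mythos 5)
Your proof is correct, but it takes a genuinely different route from the paper's. The paper first splits according to whether $\Gamma$ is virtually abelian: if so, $L_{\Gamma}$ has at most two points; if not, it invokes the characterization of $L_{\Gamma}$ as the \emph{smallest} nonempty closed $\Gamma$-invariant subset of $\partial\H$, and then runs a three-way signature analysis of $q$ restricted to the invariant subspace $E$ (signatures $(p,0)$, $(p-1,1)$, $(p-1,0)$) to exhibit a nonempty closed invariant set of the form $F\cap\partial\H$ with $F\in\{E,E^{\perp}\}$ a proper subspace; minimality then forces $L_{\Gamma}\subset F\cap\partial\H$. You split instead according to degeneracy of $q|_{V}$. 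In the non-degenerate case your argument is more elementary than the paper's: you need neither minimality nor the virtually-abelian dichotomy, only the invariant totally geodesic copy $\PP(S^{\perp})\cap\H$ of $\H^{n-s}$, a basepoint placed inside it, and the basepoint-independence of $L_{\Gamma}$ (which the paper's definition explicitly grants); this subsumes the paper's cases $(p,0)$ and $(p-1,1)$ at once. In the degenerate case the comparison reverses: for the paper this case is trivial, since the radical gives a fixed point $\xi\in\partial\H$, hence a nonempty closed invariant set, and minimality applies with no information about $\Stab(\xi)$; you instead import the nontrivial classical fact that a discrete group fixing a boundary point is elementary (amenability of the stabilizer plus discreteness), to bound $L_{\Gamma}$ by two points. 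Your closing remark about $\PP(L^{\perp})$ being tangent to $\partial\H$ identifies exactly the difficulty that the paper's minimality argument sidesteps: one never needs the loxodromic endpoint $\eta$ to lie in an invariant subspace, only that $\{\xi\}$ is closed and invariant. Finally, both proofs tacitly use discreteness (the paper's definition of $L_{\Gamma}$ and its minimality property are stated for discrete infinite subgroups), so your explicit reliance on it in the degenerate case is consistent with how the lemma is applied to the groups $\Lambda_t^{mor}$.
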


\begin{proof}
Si $\G$ est virtuellement abélien alors l'ensemble limite de $\G$ contient au plus deux points, il est donc inclus dans un hyperplan de $\partial \H$.

Si $\G$ n'est pas virtuellement abélien alors l'ensemble limite de $\G$ est le plus petit fermé non vide $\G$-invariant de $\partial \H$. Nous allons montrer que $\G$ préserve un fermé \underline{non vide} de $\partial \H$ de la forme $F \cap \partial \H$, où $F$ est un sous-espace vectoriel de $\R^{n+1}$.

Comme le groupe $\Gamma$ n'est pas irréductible, il préserve un sous-espace vectoriel $E$ de $\R^{n+1}$ de dimension $p$.  Notons $\C = \{ x \in \R^{n+1} \mid q(x) < 0 \}$ le cône de lumière de $q$.

La forme quadratique $q$ restreinte à $E$ possède trois signatures possibles:

\begin{tabular}{clcc}
$\cdot$ & $(p,0)$      & ou de façon équivalente & $E \cap \overline{\C} = \varnothing$ \\
$\udotdot$& $(p-1,1)$  &               "                       & $E \cap \C \neq \varnothing$\\
$\therefore$ & $(p-1,0)$  &           "                           & $E \cap \C = \varnothing$ et $E \cap \partial \C \neq \varnothing$ \\
\end{tabular}

Dans les deux derniers cas, l'intersection $E \cap \partial \H$ est non vide et préservé par $\Gamma$, dans le premier cas l'intersection de $E^{\bot}  \cap \partial \H$ est non vide et préservé par $\Gamma$. Par conséquent, l'ensemble limite de $\Gamma$ est inclus dans un hyperplan de $\partial \H$.
\end{proof}

\subsection{Ensemble limite}

La définition d'ensemble limite pour un sous-groupe discret $\G$ de $\ss$ n'est pas aussi simple que pour un sous-groupe de $\SO$, même s'il préserve un ouvert proprement convexe. En effet, on ne peut pas définir l'ensemble limite à l'aide d'un point base $x_0$, car c'est la strict-convexité de l'ellipsoïde qui donne l'indépendance en $x_0$ de l'ensemble limite $L_{\G}^{x_0}$.

Mais Yves Benoist a montré le théorème suivant qui nous permet de définir l'ensemble limite dans un cadre plus général (lemme 2.5 de \cite{MR1767272} ou  lemme 3.6 \cite{MR1437472}).

\begin{theo}[Benoist]\label{def_enslim}
Soit $\G$ un sous-groupe discret de $\ss$ qui préserve un ouvert proprement convexe $\O$ de $\PP^n$. Si $\G$ est irréductible alors il existe un unique fermé non vide $\G$-invariant $L_{\G}$ tel que si $F \subset \PP^n$ est un fermé non vide $\G$-invariant alors $L_{\G} \subset F$. On appelle ce fermé \emph{l'ensemble limite} de $\G$, on le note $L_{\G}$, c'est l'adhérence des points attractifs des éléments de $\G$.
\end{theo}

\begin{prop}\label{prop_min}
Le groupe $\Lambda_t$ que l'on a construit est irréductible. Par conséquent son ensemble limite est bien défini. L'action du groupe $\Lambda_t$ sur le bord $\partial \mho_t$ du convexe $\mho_t$ est minimale (i.e il n'existe pas de fermé non triviale invariant par $\G$), autrement dit l'ensemble limite du groupe $\Lambda_t$ est égale à $\partial \mho_t$.
\end{prop}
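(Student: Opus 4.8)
The plan is to treat the three assertions in the logical order forced by Benoist's theorem: first irreducibility, which makes the limit set available through Theorem~\ref{def_enslim}, and only then minimality, which identifies that limit set with all of $\partial \mho_t$.

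For irreducibility I would argue not with $\Lambda_t$ directly but with the undeformed part of the holonomy. By construction the subgroup $\Gamma_g := \rho_0(\pi_1(\M_g))$ in the separating case (and $\rho_0(\pi_1(\Mc))$ in the HNN case) is carried unchanged into $\Lambda_t$, so $\Gamma_g \subseteq \Lambda_t \cap \SO$; thus any $\Lambda_t$-invariant subspace of $\R^{n+1}$ is already $\Gamma_g$-invariant, and it suffices to prove that $\Gamma_g$ is irreducible. Since $\M_g$ (resp. $\Mc$) is a genuine $n$-dimensional piece of $\M$ with totally geodesic boundary, $\Gamma_g$ is non-elementary and its limit set $L_{\Gamma_g} \subset \partial \H^n$ cannot lie in any subsphere $\partial H' \cap \partial \H^n$: were it contained in such a subsphere, $\Gamma_g$ would preserve the hyperplane $H'$ and hence act on a totally geodesic $\H^{n-1}$, forcing the convex core of $\M_g$ to be $(n-1)$-dimensional, a contradiction. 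The contrapositive of Lemma~\ref{lem_irr_SO} then gives that $\Gamma_g$ is irreducible on $\R^{n+1}$, whence so is $\Lambda_t$. This argument is uniform in $n \geqslant 2$ precisely because it uses the $n$-dimensional side group and not the hypersurface group $\Delta_0$, whose $\mathrm{SO}_{n-1,1}(\R)$-representation would be reducible when $n = 2$.

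With irreducibility in hand, Theorem~\ref{def_enslim} applies and produces the unique smallest nonempty closed $\Lambda_t$-invariant set $L_{\Lambda_t}$; since $\Lambda_t$ preserves the properly convex $\mho_t$, the attracting fixed points of its elements lie on $\partial \mho_t$, so $L_{\Lambda_t} \subseteq \partial \mho_t$. It remains to prove $L_{\Lambda_t} = \partial \mho_t$, i.e. that the $\Lambda_t$-action on $\partial \mho_t$ is minimal, and here I would transport minimality from the hyperbolic picture through the folding. By Theorem~\ref{theo_kapo} the developing map extends to a $\pi_1(\M)$-equivariant homeomorphism $\dev_t : \partial \O_0 \to \partial \mho_t$, intertwining the $\rho_0$-action on the source with the $\rho_t$-action on the target. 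The source $\partial \O_0$ is the boundary sphere $\partial \H^n$ of the ellipsoid, on which $\Lambda_0 = \rho_0(\pi_1(\M))$ acts as a lattice of $\SO$, hence as a non-elementary group with full limit set $\partial \H^n$, so its action is minimal. Conjugating by $\dev_t$, the $\Lambda_t$-action on $\partial \mho_t$ is topologically conjugate to a minimal action, hence minimal; the nonempty closed invariant set $L_{\Lambda_t}$ must therefore equal $\partial \mho_t$, which is the assertion.

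The delicate points, which I would isolate, are the two geometric inputs underlying the two halves: (i) that the unchanged side group $\Gamma_g$ is genuinely $n$-dimensional, so that its limit set escapes every subsphere and Lemma~\ref{lem_irr_SO} can be invoked — this is the heart of irreducibility; and (ii) the $\pi_1(\M)$-equivariance of the boundary homeomorphism of Theorem~\ref{theo_kapo}, which is exactly what lets one import the minimality of a lattice action on $\partial \H^n$ to the exotic boundary $\partial \mho_t$. Neither step requires computation, but both must be stated with care, since it is precisely the passage from the homogeneous model $\O_0$ to the folded convex $\mho_t$ that the whole construction is designed to control.
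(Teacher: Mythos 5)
Your proof is correct. Its first half is essentially the paper's own argument: both establish irreducibility of $\Lambda_t$ by restricting to a side subgroup contained in (a conjugate of) $\SO$ and applying the contrapositive of Lemma \ref{lem_irr_SO}. The paper works with the stabilizer $\Lambda_t^{mor}$ of a chamber $\C_t$ of $\mho_t$ and argues that its limit set is all of $\partial \C_t$ minus the walls, because every closed subset of $\Quotient{\C_t}{\Lambda_t^{mor}}$ has finite volume; you work with the literally undeformed group $\rho_0(\pi_1(\M_g))$ and rule out a limit set contained in a subsphere via the dimension of the convex core. These are the same mechanism with different justifications, and yours has a genuine structural advantage: it never mentions $\mho_t$, whereas the paper's completion of Theorem \ref{theo_kapo} (passing from convex to \emph{properly} convex) itself appeals to irreducibility of $\rho_t$, so your ordering dissolves an apparent circularity in the paper's exposition. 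For minimality, your route genuinely diverges from the paper's: the paper stays inside the convex picture and observes that $L_{\Lambda_t}$ contains the limit sets of the stabilizers of \emph{all} the chambers of $\mho_t$, whose union is dense in $\partial \mho_t$; you instead transport minimality of the lattice action on $\partial \H^n$ through the $\pi_1(\M)$-equivariant boundary homeomorphism $\dev_t : \partial \O_0 \rightarrow \partial \O_t$ of Theorem \ref{theo_kapo}. That conjugacy argument is sound --- minimality is a purely topological-dynamical property, the theorem supplies exactly the equivariant extension you need, and your preliminary reduction $L_{\Lambda_t} \subseteq \partial \mho_t$ is immediate (one can even shortcut your fixed-point argument by noting that $\partial \mho_t$ is a nonempty closed invariant set, so Theorem \ref{def_enslim} applies directly). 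It is in fact the very trick the paper itself uses later to transport the dichotomy between bounded parabolic points and weak concentration points from $\partial \mho_0$ to $\partial \mho_t$. What the paper's version of the minimality step buys instead is the explicit identification of the chamber stabilizers as irreducible discrete subgroups of various conjugates of $\SO$, information that is reused verbatim in the proof of Zariski density (Corollary \ref{coro_zar}); with your argument that corollary would need to re-derive it.
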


\begin{proof}
La variété projective $\M_t$ a été construite à l'aide d'un pliage de la variété hyperbolique $\M_0$ le long de l'hypersurface $\Nn_0$. On note $H$ un relevé de $\Nn_0$ à $\mho_t$ pour tout $t\in \R$. 

Les composantes connexes de $\mho'=\displaystyle{\mho_t \setminus \bigcup_{\g \in \Lambda_t} \g H}$ sont des convexes inclus dans $\mho_t$.
Si $\M\setminus\Nn$ possède deux composantes connexe alors on note $\C_t^1$ et $\C_t^2$ les deux composantes connexes de $\mho'$ qui bordent $H$. Si $\M\setminus \Nn$ est connexe alors on note $\C_t^*$ l'une des deux composantes connexes de $\mho'$ qui bordent $H$.

Les arguments qui suivent ne dépendent pas du cas dans lequel on est. On se place donc dans l'un des deux cas, on note $\C_t$ le convexe $\C_t^1$, $\C_t^2$ ou $\C_t^*$.

Le stabilisateur $\Lambda_t^{mor}$ de $\C_t$ est conjugué à un sous-groupe discret de $\SO$ et la variété $\C_t /_{ \Lambda_t^{mor}}$ est une variété hyperbolique non complète. De plus, le volume de tout fermé inclus dans $\C_t/_{ \Lambda_t^{mor}}$ est fini. L'ensemble limite de $\Lambda_t^{mor}$ est donc $\displaystyle{\partial \C_t \setminus \bigcup_{\g \in \Lambda_t} \g H}$.

Par conséquent, l'action de $\Lambda_t^{mor}$ sur $\R^{n+1}$ est irréductible (lemme \ref{lem_irr_SO}). Il vient que le groupe $\Lambda_t$ est aussi irréductible. En particulier, son ensemble limite est bien défini (théorème \ref{def_enslim}).

De plus, l'ensemble limite de $\Lambda_t$ contient les ensembles limites de tous les stabilisateurs des composantes connexes de $\displaystyle{\mho_t \setminus \bigcup_{\g \in \Lambda_t} \g H}$. Par suite, l'ensemble limite de $\Lambda_t$ est $\partial \mho_t$ (i.e l'action de $\Lambda_t$ sur $\partial \mho_t$ est minimale).
\end{proof}

\subsection{Zariski-densité}

Le théorème suivant est dû à Yves Benoist dans \cite{MR1767272}.

\begin{theo}[Benoist]
Soit  $\G$ un sous-groupe discret de $\ss$ qui préserve un ouvert proprement convexe $\O$ de $\PP^n$. Si l'action de $\G$ sur le bord de $\O$ est minimale alors l'adhérence de Zariski de $\G$ est conjuguée à $\SO$ ou $\ss$.
\end{theo}

\begin{coro}\label{coro_zar}
Les groupes $\Lambda_t$ que l'on a construit sont Zariski dense dans $\ss$.
\end{coro}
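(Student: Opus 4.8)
Le plan est de combiner la proposition \ref{prop_min}, qui assure que l'action de $\Lambda_t$ sur $\partial \mho_t$ est minimale, avec le théorème de Benoist énoncé juste au-dessus. Celui-ci entraîne immédiatement que l'adhérence de Zariski $G$ de $\Lambda_t$ est conjuguée à $\SO$ ou à $\ss$. Comme $\ss$ est le groupe ambiant tout entier, il ne reste qu'à exclure le premier cas, c'est-à-dire à montrer que pour $t>0$ le groupe $\Lambda_t$ ne peut pas être conjugué à un sous-groupe de $\SO$.

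La remarque clé est que si $G$ était conjuguée à $\SO=\mathrm{SO}_{n,1}(\R)$, alors $\Lambda_t$ préserverait une forme quadratique non dégénérée $q'$ de signature $(n,1)$. Je raisonne donc par l'absurde en supposant l'existence d'une telle forme. Le point de départ est fourni par la démonstration de la proposition \ref{prop_min} : le stabilisateur $\Lambda_t^{mor}$ d'une chambre $\C_t$ est conjugué à un sous-groupe discret de $\SO$, et son action sur $\R^{n+1}$ est \emph{irréductible} (via le lemme \ref{lem_irr_SO}, car son ensemble limite $\partial\C_t \setminus \bigcup_{\g}\g H$ n'est contenu dans aucun hyperplan). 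Quitte à choisir la chambre de référence, ce stabilisateur est contenu dans $\mathrm{SO}(q)$ où $q=x_1^2+\dots+x_n^2-x_{n+1}^2$. Or un groupe agissant de façon irréductible sur $\R^{n+1}$ et préservant la forme non dégénérée $q$ ne préserve aucune autre forme quadratique à un scalaire près : si $q'$ est une seconde forme invariante, l'endomorphisme $\phi$ défini par $q'(\cdot,\cdot)=q(\phi\,\cdot,\cdot)$ est $q$-autoadjoint, donc $\R$-diagonalisable, et commute à l'action du groupe ; l'irréductibilité force $\phi$ à être scalaire, donc $q'$ est proportionnelle à $q$.

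Il suffit alors de constater qu'un autre générateur de $\Lambda_t$ ne préserve pas $q$ dès que $t\neq 0$. Dans le cas non séparant, $\Lambda_t$ contient $\rho_t(\alpha)=a_t\rho_0(\alpha)$ avec $\rho_0(\alpha)\in \mathrm{SO}(q)$ ; comme $a_t=\mathrm{diag}(e^{nt},e^{-t},\dots,e^{-t})$, un calcul direct donne $q\circ a_t^{-1}= e^{-2nt}x_1^2+e^{2t}(x_2^2+\dots+x_n^2-x_{n+1}^2)$, forme proportionnelle à $q$ uniquement pour $t=0$ ; donc $\rho_t(\alpha)$ ne préserve pas $q$. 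Dans le cas séparant, la chambre adjacente a pour stabilisateur $a_t\rho_0(\pi_1(\M_d))a_t^{-1}\subset \mathrm{SO}(a_t\cdot q)$, irréductible lui aussi, ce qui impose simultanément $q'\propto q$ et $q'\propto a_t\cdot q$, donc encore $e^{-2(n+1)t}=1$ et $t=0$. Dans les deux cas on aboutit à une contradiction avec $t>0$, si bien que $\Lambda_t$ ne préserve aucune forme quadratique et son adhérence de Zariski est $\ss$.

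Le point délicat n'est pas le calcul algébrique final, qui est élémentaire, mais l'importation depuis la proposition \ref{prop_min} de l'irréductibilité du stabilisateur d'une chambre : c'est elle qui transforme l'hypothèse abstraite « préserver une forme » en l'énoncé rigide « préserver $q$ à un scalaire près », et qui rend concluant le calcul explicite mettant en jeu l'élément $a_t$.
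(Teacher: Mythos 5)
Votre démonstration est correcte dans sa structure et coïncide avec celle de l'article pour la première moitié : vous combinez, exactement comme l'auteur, la minimalité de l'action sur $\partial \mho_t$ (proposition \ref{prop_min}) avec le théorème de Benoist pour ramener le problème à exclure que l'adhérence de Zariski $G$ soit conjuguée à $\SO$. C'est pour cette exclusion que votre route diverge. L'article observe que les stabilisateurs des chambres sont des sous-groupes discrets irréductibles de \emph{différents} conjugués de $\SO$, invoque le lemme \ref{lem_zar_so} pour conclure qu'ils sont Zariski denses chacun dans son conjugué, et en déduit que $G$ ne peut être un seul conjugué de $\SO$. Vous traduisez au contraire l'hypothèse « $G$ conjugué à $\SO$ » en « $\Lambda_t$ préserve une forme quadratique $q'$ de signature $(n,1)$ », forcez $q' \propto q$ par un argument de rigidité à la Schur appuyé sur l'irréductibilité du stabilisateur d'une chambre, puis concluez par le calcul explicite $q\circ a_t^{-1}= e^{-2nt}x_1^2+e^{2t}(x_2^2+\dots+x_n^2-x_{n+1}^2)$, proportionnelle à $q$ si et seulement si $t=0$. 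Votre version a un mérite réel : elle rend explicite le point que l'article laisse implicite, à savoir que les conjugués de $\SO$ en jeu sont effectivement \emph{distincts} pour $t\neq 0$ — c'est précisément votre équation $e^{-2(n+1)t}=1$, sans laquelle l'argument « Zariski denses dans des conjugués différents » de l'article ne conclut pas. En contrepartie, la preuve de l'article est plus courte et évite toute discussion de formes invariantes.

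Un pas de votre lemme de rigidité est toutefois faux tel qu'énoncé : un endomorphisme $q$-autoadjoint pour une forme \emph{indéfinie} n'est pas nécessairement $\R$-diagonalisable. Par exemple, en signature $(1,1)$, pour $q=2x_1x_2$, l'endomorphisme $\phi(x_1,x_2)=(ax_1+bx_2,\,ax_2)$ avec $b\neq 0$ est $q$-autoadjoint, non diagonalisable, et la forme associée $q'(x)=2ax_1x_2+bx_2^2$ est encore de signature $(1,1)$ pour $a \neq 0$. La conclusion ($\phi$ scalaire) reste néanmoins vraie et se répare de deux façons : (i) sans diagonaliser, tout noyau $\ker(\phi-\lambda)$ pour $\lambda$ valeur propre réelle est invariant par le groupe, et un opérateur autoadjoint pour une forme lorentzienne en dimension $n+1 \geqslant 3$ admet toujours une valeur propre réelle (au plus une paire de valeurs propres complexes conjuguées est possible en signature $(n,1)$), de sorte que l'irréductibilité force $\phi$ scalaire ; ou, (ii) plus simple et aligné sur l'article, le lemme \ref{lem_zar_so} montre que le stabilisateur $\Lambda_t^{mor}$ est Zariski dense dans $\mathrm{SO}(q)$, donc toute forme $\Lambda_t^{mor}$-invariante est $\mathrm{SO}(q)$-invariante, et la représentation standard de $\SO$ étant absolument irréductible, $q'\propto q$. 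Avec l'une ou l'autre réparation, votre calcul final conclut correctement dans les deux cas, séparant et non séparant.
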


\begin{proof}
Il ne nous reste plus qu'à montrer que l'adhérence de Zariski $G$ de $\Lambda_t$ ne peut pas être conjuguée à $\SO$.

On reprend les notations de la démonstration de la proposition \ref{prop_min}. Les stabilisateurs des composantes connexes de $\displaystyle{\mho_t \setminus \bigcup_{\g \in \Lambda_t} \g H}$ sont des sous-groupes discrets irréductibles de différents conjugués de $\SO$.

Or, le lemme \ref{lem_zar_so} montre que ces groupes sont Zariski denses dans le conjugué de $\SO$ qui le contient. Par conséquent, le groupe $G$ ne peut être conjugué à $\SO$.
\end{proof}

\begin{lemm}\label{lem_zar_so}
Tout sous-groupe discret et irréductible de $\SO$ est Zariski dense dans $\SO$.
\end{lemm}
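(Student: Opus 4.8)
The plan is to show that the Zariski closure $G=\overline{\G}^{\mathrm{Zar}}$ of $\G$ inside $\SO$ is all of $\SO$, by ruling out that its identity component is a proper subgroup. First I would record three reductions. Since stabilizing a given linear subspace is a Zariski-closed condition on $\ss$, the group $G$ has exactly the same invariant subspaces of $\R^{n+1}$ as $\G$; hence $G$ is irreducible as well, and in particular acts completely reducibly, so $G$ is reductive and its analytic identity component $H=G^0$ is a connected reductive subgroup of $\SO$. Next, $\G$ cannot be finite: a finite (or relatively compact) subgroup of $\SO$ fixes a point of $\H^n$ by the Cartan fixed-point theorem, hence preserves the timelike line through that point together with its spacelike orthogonal complement, contradicting irreducibility. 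Finally, since $G/H$ is finite, $\G\cap H$ has finite index in $\G$; therefore $\G\cap H$ is again infinite and has the same limit set as $\G$, so $L_{\G}=L_{\G\cap H}\subseteq\partial\H$ is nonempty.

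The heart of the argument is that $H$, being a \emph{proper} connected reductive subgroup of the real rank-one group $\SO$, has a limit set confined to a proper subsphere of $\partial\H$. Because $\G$ normalizes $H=G^0$, conjugation by any $\g\in\G$ carries $H$ to itself and hence carries the canonically attached limit set $L_H\subseteq\partial\H$ to itself: thus $L_H$ is a closed $\G$-invariant subset of $\partial\H$. If $H$ were compact it would fix a point of $\H^n$, forcing $\G\cap H$ to be a discrete subgroup of a compact group, hence finite, hence $\G$ finite — excluded above. So $H$ is noncompact. Here I would invoke the structure of subgroups of the rank-one group $\SO$: a proper connected reductive subgroup stabilizes a proper nondegenerate subspace $E\subsetneq\R^{n+1}$, equivalently a proper totally geodesic subspace $\H^k\subsetneq\H^n$ (the signature $(n,1)$ of $q$ leaves room for only one negative direction, so the noncompact simple factors of $H$ are themselves of type $\mathrm{SO}_{k,1}$ and act on a totally geodesic $\H^k$). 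Consequently $L_H\subseteq E\cap\partial\H$ lies in the proper subsphere $\partial\H^k$.

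I would then conclude exactly as in the proof of Lemma~\ref{lem_irr_SO}. The span in $\R^{n+1}$ of the isotropic lines belonging to $L_{\G}$ is a $\G$-invariant subspace; it is nonzero because $L_{\G}$ is nonempty, and since $L_{\G}=L_{\G\cap H}\subseteq L_H\subseteq E\cap\partial\H$ it is contained in $E\subsetneq\R^{n+1}$, hence proper. This contradicts the irreducibility of $\G$. Therefore no proper reductive $H$ can occur, $H=\SO^0$, and since the analytic identity component $\SO^0$ is already Zariski dense in the (Zariski-connected) real algebraic group $\SO$, we obtain $G=\SO$, i.e. $\G$ is Zariski dense.

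The main obstacle is the structural step in the second paragraph: identifying the possible proper connected reductive subgroups of $\SO$ and showing each one preserves a proper totally geodesic subspace, so that its limit set is pinned inside a proper subsphere. This is where the real rank one of $\SO$ and the Lorentzian signature of $q$ are essential, ruling out exotic irreducible reductive subgroups — for instance symmetric-power embeddings of $\mathrm{SL}_2(\R)$, whose invariant quadratic form has the wrong signature. The remaining ingredients — that irreducibility forces reductivity of the Zariski closure, that the limit set is a commensurability invariant, and that it is canonically permuted under normalization — are standard.
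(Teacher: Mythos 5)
You should first note that the paper does not prove this lemma at all: it is stated and immediately referred to the literature (\og On pourra trouver une démonstration de ce lemme dans \fg{} \cite{MR2081159}). So your argument is necessarily a different route, and in outline it is the standard one and it is correct. All your reductions are sound: stabilizing a linear subspace is a Zariski-closed condition, so the closure $G$ acts irreducibly; irreducibility forces $G$ reductive (the fixed space of the unipotent radical is a nonzero invariant subspace); a finite $\G$ is excluded by the Cartan fixed-point argument; the limit set is unchanged under passage to a finite-index subgroup; and your endgame --- the span of the isotropic lines through $L_{\G}$ is a nonzero $\G$-invariant subspace trapped inside the proper subspace $E$ --- is exactly the mechanism of the paper's lemme \ref{lem_irr_SO}, so the contradiction with irreducibility is genuine. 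The last step is also fine: since $\SO$ is Zariski connected, it has no proper Zariski-closed subgroup of finite index, so the analytic component $\SO^0$ is indeed Zariski dense and $G=\SO$ follows.

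Two points deserve more care than you give them. First, the pivotal structural claim --- every proper connected reductive subgroup $H$ of $\SO$ preserves a proper nondegenerate subspace, equivalently a totally geodesic $\H^k\subsetneq\H^n$ --- is a real theorem, not a remark, and you correctly flag it as the main obstacle; the clean way to obtain it is the Karpelevich--Mostow totally geodesic orbit theorem (plus the fact that the only connected semisimple group transitive on $\H^n$ is $\SO^0$), while your Schur-plus-signature argument only disposes of the case where $H$ itself acts irreducibly. Your sketch silently skips the reducible-but-degenerate case: if every $H$-invariant proper subspace is degenerate, then $H$ fixes an isotropic line, i.e.\ a point of $\partial \H$, and one must then invoke conjugacy of reductive subgroups of a parabolic into a Levi, which preserves the nondegenerate plane spanned by the two isotropic lines --- only after that does ``$E$ nondegenerate and proper'' hold in all cases. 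Second, a technicality: the analytic identity component $G^0$ of a real algebraic group need not be Zariski closed (indeed the Zariski closure of $\SO^0$ is all of $\SO$), so statements like ``$H$ is a reductive algebraic subgroup'' are cleaner if you take $H$ to be the real points of the Zariski identity component of $G$; this still meets $\G$ in a finite-index subgroup and changes nothing downstream. With these two repairs made explicit, your proof is a complete and self-contained substitute for the citation, at the cost of importing one classical structure theorem about rank-one groups.
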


On pourra trouver une démonstration de ce lemme dans \cite{MR2081159}.

\begin{rema}
On aurait pu montrer que $\Lambda_t$ est Zariski-dense en utilisant simplement le fait que $\SO$ est un sous-groupe fermé connexe maximale de $\ss$. Nous avons choisi de montrer que l'action de $\Lambda_t$ sur $\mho_t$ est minimale car cela nous sera utile par la suite.
\end{rema}

\subsection{Quelques conséquences}\label{csq}
\subsubsection{Un pliage est une déformation non triviale}

\begin{coro}
Le pliage d'une variété hyperbolique de volume fini le long d'une hypersurface totalement géodésique définit une déformation non triviale de la structure projective.
\end{coro}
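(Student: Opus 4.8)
The plan is to distinguish the deformed structures from the base structure at the level of holonomy, using the Zariski density proved in Corollary~\ref{coro_zar}. Recall that the holonomy of a marked projective structure is well-defined up to conjugacy in $\ss$, and that two structures that agree as points of $\PP(M)$ have holonomies conjugate by some $g \in \ss$, by the characterization of isomorphic structures recalled just after the definitions of developing map and holonomy. Hence, were the bending path $c(t)=\M_t$ constant---that is, were the deformation trivial in the sense of the definition---the holonomies $\rho_{a_t}$ would all be conjugate in $\ss$ to $\rho_0=\rho_{a_0}$.

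Next I would use the fact that the Zariski closure is invariant under conjugation: if $H'=gHg^{-1}$ then $\overline{H'}^{\,\mathrm{Zar}}=g\,\overline{H}^{\,\mathrm{Zar}}\,g^{-1}$. At $t=0$ the holonomy $\rho_0$ takes its values in $\SO$, a proper Zariski-closed subgroup of $\ss$; therefore the Zariski closure of $\rho_0(\pi_1(M))$ is contained in $\SO$, and that of any conjugate $g\,\rho_0(\pi_1(M))\,g^{-1}$ is contained in the proper subgroup $g\,\SO\,g^{-1}$.

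Finally I invoke Corollary~\ref{coro_zar}: for $t\neq 0$ the group $\Lambda_t=\rho_{a_t}(\pi_1(M))$ is Zariski dense in $\ss$. Since no conjugate of $\SO$ equals $\ss$ (the two groups have different dimensions), the holonomy $\rho_{a_t}$ cannot be conjugate to $\rho_0$ for any $t\neq 0$, contradicting the first step. The path $c$ is thus nonconstant, and the bending is a nontrivial deformation of the projective structure.

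The argument is essentially a formal consequence of the results already obtained, so I expect no genuine obstacle; the only point demanding care is to confirm that the Zariski-density statement of Corollary~\ref{coro_zar}, established there for the specific pair $(\M_0,\Nn_0)$, applies to the bending of an arbitrary finite-volume hyperbolic manifold along any totally geodesic hypersurface. For this I would check that the proofs of Proposition~\ref{prop_min} and Corollary~\ref{coro_zar} use only that each chamber stabilizer is an irreducible discrete subgroup of a conjugate of $\SO$ whose limit set fills the boundary of its chamber---properties that persist verbatim in the general finite-volume setting.
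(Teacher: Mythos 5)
Your proof is correct, but it takes a genuinely different route from the paper's written proof. You compare the Zariski closures of the \emph{full} holonomy groups: since the closure is conjugation-equivariant, $\rho_0(\pi_1(M))$ lies in the proper algebraic subgroup $\SO$ while Corollaire~\ref{coro_zar} makes $\Lambda_t$ Zariski dense in $\ss$ for $t\neq 0$, so $\rho_{a_t}$ cannot be conjugate to $\rho_0$ and the path is nonconstant --- which is exactly what the definition of a nontrivial deformation requires. The paper instead runs a centralizer argument on the \emph{unbent side}: if $g$ conjugates $\rho_t$ to $\rho_{t'}$, then since both restrict to the same representation on $\pi_1(\M_g)$, whose image is Zariski dense in $\SO$ by the lemme~\ref{lem_zar_so}, the element $g$ centralizes $\SO$ and is therefore central in $\ss$, forcing $\rho_t=\rho_{t'}$ and contradicting the evident inequality for $t\neq t'$. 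Each approach buys something: the paper's argument is more elementary (it needs only the lemme~\ref{lem_zar_so} and the centralizer computation, not Benoist's dichotomy theorem and the minimality result, proposition~\ref{prop_min}, which underlie Corollaire~\ref{coro_zar}) and it proves strictly more, namely that $t\mapsto c(t)$ is injective; your Zariski-closure comparison is blind to the difference between two nonzero parameters $t,t'$, since both holonomies are then dense in $\ss$, so it only yields $c(t)\neq c(0)$ --- sufficient for the statement as defined, but weaker. Amusingly, your route is the one the paper announces in the partie~\ref{sec_pliage} (``Nous le montrerons \`a l'aide du corollaire~\ref{coro_zar}''), even though its actual proof uses the lemme~\ref{lem_zar_so} directly. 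Your closing caveat about transferring Corollaire~\ref{coro_zar}, stated for the specific pair $(\M_0,\Nn_0)$, to an arbitrary finite-volume bending is well taken and correctly resolved; note that the paper's own proof faces the analogous issue, since the Zariski density of $\rho_t(\pi_1(\M_g))$ in $\SO$ rests on the irreducibility of the chamber stabilizers established in the proof of proposition~\ref{prop_min}.
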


\begin{proof}
Supposons que les représentations $\rho_t$ et $\rho_{t'}$ sont conjugués par un élément $g \in \ss$. Il est clair que $\rho_t \neq \rho_{t'}$ si $t \neq t'$. On ne fait que le cas $\M \setminus \Nn$ possède deux composantes connexes. L'autre cas est analogue.

Les représentations $\rho_t$ et $\rho_{t'}$ sont égales sur $\pi_1(\M_g)$. L'adhérence de Zariski de $\rho_t(\pi_1(\M_g))$ est $\SO$ (lemme \ref{lem_zar_so}), par suite, $g$ appartient au centralisateur de $\SO$ dans $\ss$, c'est à dire au centre de $\ss$. Par suite $\rho_t= \rho_{t'}$, ce qui est absurde.



\end{proof}

\subsubsection{Le groupe $\Lambda_t$ n'est pas un réseau de $\ss$}

\begin{rema}
Le groupe $\Lambda_t$ n'est pas un réseau de $\ss$, car l'action d'un réseau de $\ss$ sur l'espace $\ss$-homogène $\PP^n$ est ergodique. C'est une conséquence du théorème d'ergodicité de Moore et du théorème de dualité, on pourra consulter le livre \cite{MR1781937} (notamment l'exemple 2.9 page 92).
\end{rema}

\subsubsection{L'ouvert $\mho_t$ n'est pas homogène}

\begin{prop}
Le groupe $\Aut(\mho_t)$ est discret par conséquent, le groupe $\Lambda_t$ est d'indice fini dans le groupe $\Aut(\mho_t)$.
\end{prop}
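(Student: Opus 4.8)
The plan is to treat the two assertions separately: first prove that $\Aut(\mho_t)$ is discrete, which is a soft algebraic consequence of the Zariski-density already obtained in Corollary \ref{coro_zar}, and then deduce the finite-index statement by a covolume comparison. For the discreteness, recall from the fait of paragraph \ref{base} that $\Aut(\mho_t)$ is a closed subgroup of $\ss$. I would let $G^0$ denote its identity component: this is a connected Lie subgroup, automatically normal in $\Aut(\mho_t)$, with Lie algebra $\mathfrak{g}^0 \subseteq \mathfrak{sl}_{n+1}(\R)$. Since $\Lambda_t \subseteq \Aut(\mho_t)$, every element of $\Lambda_t$ normalizes $G^0$, so $\mathrm{Ad}(\g)$ preserves $\mathfrak{g}^0$ for each $\g \in \Lambda_t$. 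The set of $g \in \ss$ with $\mathrm{Ad}(g)\,\mathfrak{g}^0 = \mathfrak{g}^0$ is Zariski-closed and contains the Zariski-dense group $\Lambda_t$, hence equals $\ss$; differentiating, this says exactly that $\mathfrak{g}^0$ is an ideal of $\mathfrak{sl}_{n+1}(\R)$.

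Now I would invoke the simplicity of $\mathfrak{sl}_{n+1}(\R)$ (recall $n \geqslant 2$): either $\mathfrak{g}^0 = \{0\}$ or $\mathfrak{g}^0 = \mathfrak{sl}_{n+1}(\R)$. The second case would make $G^0$ open in $\ss$, hence $\Aut(\mho_t) = \ss$ by connectedness of $\ss$; but then $\ss$ would preserve $\mho_t$, which is impossible since $\ss$ acts transitively on $\PP^n$ while $\mho_t$ is a nonempty proper open subset. Therefore $\mathfrak{g}^0 = \{0\}$, $G^0$ is trivial, and $\Aut(\mho_t)$ is discrete. This already yields the non-homogeneity claim, since a transitive automorphism group of $\mho_t$ would necessarily have positive dimension.

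For the finite-index statement the idea is a volume argument. As $\Aut(\mho_t)$ is discrete and acts properly on $\mho_t$ (again paragraph \ref{base}), the action is properly discontinuous and the quotient $Q = \Quotient{\mho_t}{\Aut(\mho_t)}$ is an orbifold; the natural map $\Quotient{\mho_t}{\Lambda_t} \to Q$ is then an orbifold covering whose degree is precisely the index $[\Aut(\mho_t):\Lambda_t]$. The Busemann measure is $\Aut(\mho_t)$-invariant, so it descends to both quotients and is multiplied by the degree under the covering, giving $\mu(\Quotient{\mho_t}{\Lambda_t}) = [\Aut(\mho_t):\Lambda_t]\cdot \mu(Q)$. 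Since $\mu(Q) > 0$ (any point with trivial stabilizer has a neighbourhood injecting into $Q$) and the left-hand side is finite, the index must be finite.

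The step I expect to be the real obstacle is precisely this last one: discreteness is formal once Zariski-density and simplicity are in hand, but the finiteness of the index genuinely rests on the finiteness of the Hilbert volume of $\Quotient{\mho_t}{\Lambda_t}$, which is the content of section \ref{sec_volfini}. In other words, the \emph{consequently} in the statement should be read as ``discreteness together with the finite-volume property of the quotient,'' and the proof must import that finite-volume theorem (and check that $Q$ carries positive orbifold volume) to close the argument.
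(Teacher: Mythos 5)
Your proof is correct, and its discreteness half is in substance the paper's own argument. The paper proves a general dichotomy (proposition \ref{prop_zar}: a Zariski-dense subgroup of a quasi-simple Lie group is discrete or dense) by exactly the mechanism you use --- the identity component of the (Hausdorff-)closure is normal, its normalizer is Zariski-closed and contains the Zariski-dense group, hence equals $\ss$, and quasi-simplicity forces the component to be trivial or everything --- and then invokes it together with corollary \ref{coro_zar}. You run the same mechanism at the Lie-algebra level directly on the closed group $\Aut(\mho_t)$, which is marginally cleaner: since $\Aut(\mho_t)$ is already closed, the \og dense\fg $\,$ branch degenerates to $\Aut(\mho_t)=\ss$, excluded as you say because no proper nonempty open set is $\ss$-invariant; you also bypass the one shaky sentence in the paper's proof of \ref{prop_zar} (\og $H$ poss\`ede un nombre fini de composantes connexes puisque c'est un groupe alg\'ebrique\fg, which is unjustified for a Hausdorff closure --- and unnecessary, since the identity component is normal in the whole closure, as your argument implicitly exploits). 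Where you genuinely add content is the finite-index half: the paper's entire proof is the sentence that the proposition is a \og cons\'equence directe\fg $\,$ of \ref{prop_zar} and \ref{coro_zar}, which does not by itself bound the index --- a discrete subgroup of a discrete group can have infinite index. Your covolume argument (the covering $\Quotient{\mho_t}{\Lambda_t} \to \Quotient{\mho_t}{\Aut(\mho_t)}$ has degree the index, the Busemann measure descends, $\mu$ of the big quotient is positive, $\mu$ of the small one is finite) is the standard and correct way to close this, and your concluding diagnosis is exactly right: the finiteness of the index imports the finite-volume theorem of section \ref{sec_volfini}, which in the paper's linear order is only established \emph{after} this proposition. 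So your write-up is both correct and more complete than the paper's.
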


\begin{proof}
Cette proposition est une conséquence directe de la proposition \ref{prop_zar} ci-dessous et du corollaire \ref{coro_zar} ci-dessus.
\end{proof}

La proposition suivante est connue depuis longtemps.

\begin{prop}\label{prop_zar}
Tout sous-groupe $\G$ Zariski-dense d'un groupe de Lie quasi-simple $G$ est discret ou dense.
\end{prop}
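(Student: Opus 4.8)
The plan is to play off the Zariski topology, in which $\G$ is dense in $G$ by hypothesis, against the ordinary Lie-group (Hausdorff) topology, whose behaviour is exactly what the statement concerns. Write $H=\overline{\G}$ for the closure of $\G$ in $G$ for the Lie-group topology. By Cartan's closed subgroup theorem, $H$ is a Lie subgroup of $G$; let $H^0$ be its identity component, which is a connected normal subgroup of $H$. The heart of the argument is to upgrade this normality inside $H$ to normality inside all of $G$, after which the quasi-simplicity of $G$ will leave only two possibilities for $H^0$.

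First I would note that $\G$ normalizes $H^0$: since $\G\subset H$ and $H^0$ is normal in $H$, every element of $\G$ conjugates $H^0$ to itself. The key algebraic input is that the normalizer $N_G(H^0)$ is Zariski-closed in $G$. Indeed, $H^0$ being connected is determined by its Lie algebra $\mathfrak{h}\subset\mathfrak{g}$, and an element $g\in G$ normalizes $H^0$ if and only if $\mathrm{Ad}(g)\,\mathfrak{h}=\mathfrak{h}$; this last condition is the preimage, under the algebraic morphism $\mathrm{Ad}\colon G\to\mathrm{GL}(\mathfrak{g})$, of the Zariski-closed stabilizer of the subspace $\mathfrak{h}$, and is therefore Zariski-closed. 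Since $\G\subset N_G(H^0)$ and $\G$ is Zariski-dense in $G$, I conclude $N_G(H^0)=G$; that is, $H^0$ is a connected normal subgroup of $G$.

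Next I would invoke quasi-simplicity: the Lie algebra $\mathfrak{g}$ is simple, so its only ideals are $0$ and $\mathfrak{g}$, and correspondingly the only connected normal subgroups of $G$ are $\{e\}$ and the identity component $G^0$. Thus either $H^0=\{e\}$ or $H^0=G^0$. In the first case $H=\overline{\G}$ is discrete, and since a subgroup can be dense in a discrete group only if it is the whole group, $\G=H$ is discrete. In the second case $\overline{\G}$ contains $G^0$; as $G$ is connected (which is the situation in our application, where $G=\ss$), this forces $\overline{\G}=G$, so that $\G$ is dense.

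The main obstacle is the single algebraic step showing that $N_G(H^0)$ is Zariski-closed, since it is precisely this that converts the Zariski-density hypothesis into a statement of normality in $G$; once it is in hand, the dichotomy drops out immediately from the simplicity of $\mathfrak{g}$ and an inspection of the two cases.
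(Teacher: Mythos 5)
Your proof is correct and follows essentially the same route as the paper's: take the Hausdorff closure $H=\overline{\G}$, observe that $\G$ normalizes the identity component $H^0$, upgrade this to normality in $G$ by Zariski-density, and conclude by quasi-simplicity that $H^0$ is trivial or all of $G$. The only difference is that you make explicit the key step the paper leaves implicit — that $N_G(H^0)$ is Zariski-closed, via $\mathrm{Ad}$ and the stabilizer of $\mathfrak{h}$ — which is a welcome clarification (the paper instead gives a slightly roundabout remark about $H$ having finitely many components, which is not actually needed since $H^0$ is automatically normal in $H$).
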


\begin{proof}
Soit $H$ l'adhérence de $\G$ pour la topologie séparée de $G$, on note $H_0$ la composante neutre de $H$. Le groupe $H_0$ est normalisé par un sous-groupe d'indice fini de $\G$ car $H$ possède un nombre fini de composantes connexes puisque c'est un groupe algébrique. Par suite, $H_0$ est normalisé par $G$ puisque $\G$ est Zariski-dense. Comme $G$ est quasi-simple, $H_0$ est égale à $G$ ou $\{1 \}$. Par suite, $\G$ est discret ou dense.
\end{proof}

\section{Conservation de la finitude du volume}\label{sec_volfini}
\subsection{Les théorèmes de Dirichlet et de Lee}

Le célèbre théorème qui suit nous sera utile pour montrer que le pliage conserve la finitude du volume de la structure projective proprement convexe.

\begin{theo}[Dirichlet]
Soit $\Gamma$ un sous-groupe discret de $\SO$. Il existe un domaine fondamental convexe et localement fini pour l'action de $\G$ sur $\H$.
\end{theo}

Si besoin, on rappelle la définition d'un domaine fondamental.

\begin{defi}
Soient $X$ un espace topologique et $\G$ un groupe qui agit sur
$X$ par homéomorphisme, on dit qu'une partie fermée $D \subset X$ est un \emph{domaine
fondamental pour l'action de $\G$ sur $X$} lorsque:
\begin{itemize}
\item $\underset{\g \in \G}{\bigcup} \g D = X$.

\item $\forall \g \neq 1$, $\g \overset{\circ}{D} \cap
\overset{\circ}{D} = \varnothing$.
\end{itemize}
De plus, un domaine fondamental $D$ pour l'action de $\G$ sur $X$
est dit \emph{localement fini} lorsque:
\begin{itemize}
\item $\forall K$ compact de $X$, $\{\g \in \G \,|\, \g D \cap K
\neq \varnothing \}$ est fini.
\end{itemize}
\end{defi}

\par{
Rappellons aussi très rapidement la démonstration de ce théorème. Pour construire un domaine fondamental, Dirichlet choisi un point $x_0$ dont le stabilisateur dans $\G$ est trivial. Il construit ensuite les hyperplans médiateurs $H_{\g}$ des segments $[x_0,\g \cdot x_0]$, pour $\g \in \G$, ceux sont des hyperplans de $\H$. Ensuite, il montre que l'adhérence de la composante connexe contenant $x_0$ de $\H$ privé de ses hyperplans médiateurs $H_{\g}$ est un domaine fondamental, on l'appele le \emph{domaine de Dirichlet pour l'action de $\G$ sur $\H$ basé en $x_0$}.
}

Le théorème de Dirichlet possède un analogue dans le monde projectif convexe.

\begin{theo}[Jaejeong Lee]\label{lee}
Soient $\O$ un ouvert proprement convexe et $\Gamma$ un sous-groupe discret de $\ss$ qui préserve $\O$. Il existe un domaine fondamental convexe et localement fini pour l'action de $\G$ sur $\O$.
\end{theo}

On pourra trouver une courte démonstration de ce théorème dans \cite{Marquis:2009kq}.

\subsection{Les ellipsoïdes de protection}

\begin{figure}[!h]
\begin{center}
\includegraphics[trim=-4cm 14cm 0cm 0cm, clip=true, width=14cm]{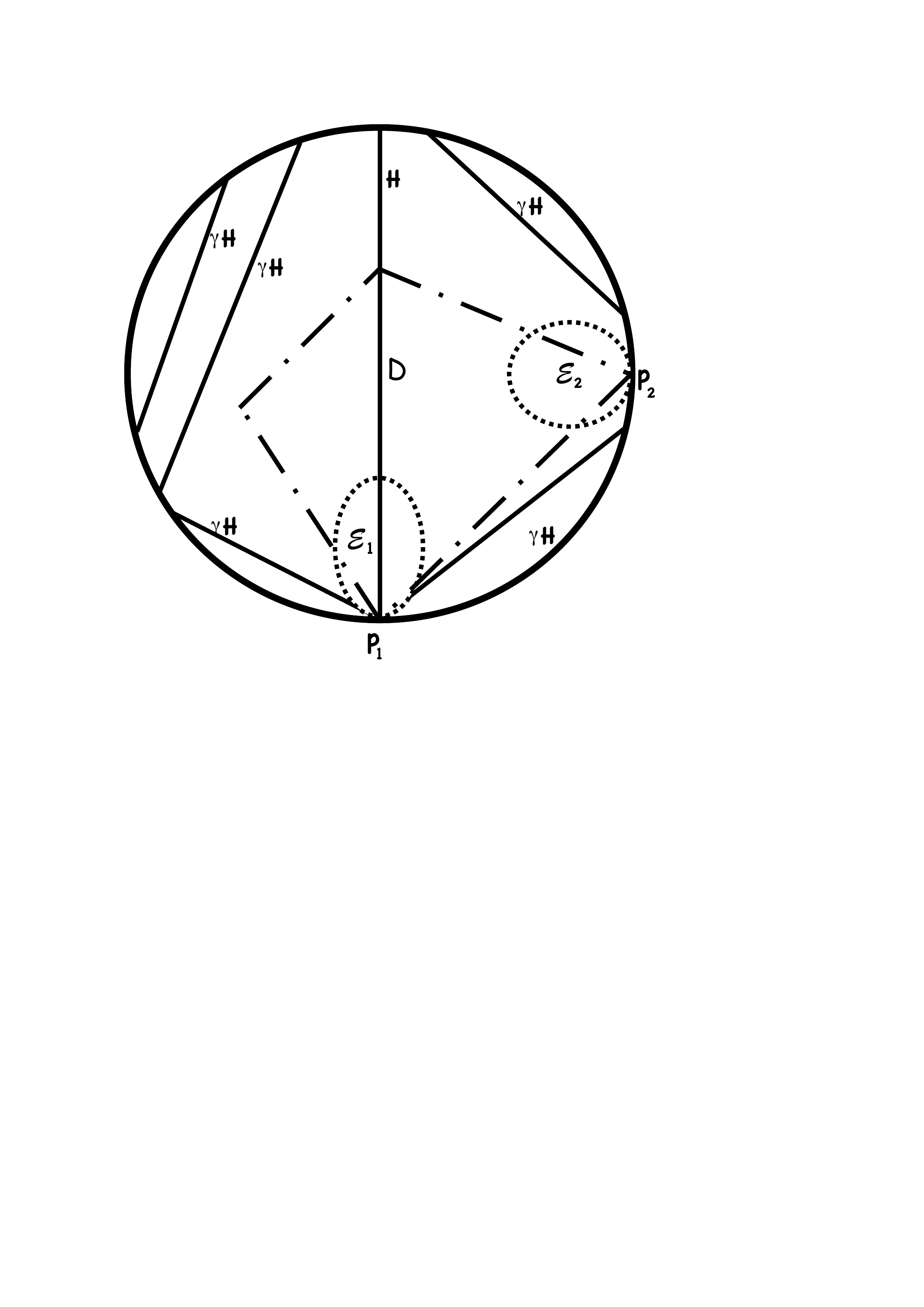}
\caption{Domaine fondamental} \label{domaine}
\end{center}
\end{figure}

\begin{lemm}\label{lem_vol}
Soit $\Gamma$ un sous-groupe discret  sans torsion et de covolume fini de $\SO$. On suppose qu'il existe un hyperplan $H$ de $\H$ tel que  $\Delta = \textrm{Stab}_{\Gamma}(H)$ agisse sur $H$ avec un covolume fini et que l'application $H_{/\Delta} \rightarrow \H_{/\G}$ est un plongement propre.

On se donne $x_{0}$ un point de $H$ dont le stabilisateur dans $\Gamma$ est trivial et on construit alors le domaine de Dirichlet $D$ pour l'action de $\Gamma$ sur $\H$ basé en $x_{0}$.

Le domaine fondamental $D$ rencontre un nombre fini d'hyperplans $\g(H)$ pour $\g \in \G$.

De plus, pour tout point $p \in \partial D$, s'il n'existe pas d'éléments $\g \in \G$ tel que $p \in \overline{\g(H)}$ alors il existe une horoboule centrée en $p$ qui ne rencontre pas les hyperplans $\g(H)$ pour $\g \in \G$.
\end{lemm}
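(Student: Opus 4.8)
The plan is to analyse the Dirichlet domain $D$ via the thick-thin decomposition of the finite-volume manifold $\Quotient{\H}{\G}$, treating separately the compact part of $D$ and its finitely many cusp regions. Two structural facts about the family $\{\g(H)\}_{\g \in \G}$ will be used throughout: since $\Nn = \Quotient{H}{\Delta}$ is \emph{embedded}, distinct hyperplanes $\g(H)$ (indexed by $\G/\Delta$) are pairwise disjoint; and since the embedding is \emph{proper}, the family is locally finite in $\H$, i.e. every compact subset of $\H$ meets only finitely many $\g(H)$. Write $\pi:\H \rightarrow \Quotient{\H}{\G}$ for the projection, so that $\bigcup_{\g}\g(H) = \pi^{-1}(\Nn)$.

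For $\varepsilon>0$ small, the thick part $D_{\geqslant \varepsilon}$ (points of injectivity radius $\geqslant \varepsilon$) is compact, being a piece of a fundamental domain lying over the compact thick part of $\Quotient{\H}{\G}$; by local finiteness it meets only finitely many $\g(H)$. It then remains to control the finitely many ideal points $p \in \partial D \subset \partial \H$, each a parabolic fixed point with stabiliser $P = \Stab_{\G}(p)$. First I would record the standard fact that, sufficiently far into the cusp, $D$ agrees with a Dirichlet domain for $P$ acting on a $P$-invariant horoball $B_p$ centred at $p$.

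Consider next a point $p$ with $p \notin \overline{\g(H)}$ for every $\g$. By $\G$-invariance no hyperplane ends at any point of $\G\cdot p$, so no lift of $\Nn$ runs into the cusp $\pi(p)$; as a finite-volume totally geodesic hypersurface entering a cusp would have a lift limiting to its parabolic fixed point, $\Nn$ must be bounded away from $\pi(p)$. Since $\Nn$ is closed (properness), there is a horoball neighbourhood $B \subset \Quotient{\H}{\G}$ of $\pi(p)$ disjoint from $\Nn$; choosing $B_p$ deep enough that $\pi(B_p)\subset B$ gives $B_p \cap \bigcup_{\g}\g(H) = \varnothing$. This is precisely the horoball required by the second assertion, and it shows that the cusp at such a $p$ meets no hyperplane past a bounded depth.

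For the remaining cusps, where some $\g(H)$ ends at $p$ and $\Nn$ genuinely runs into $\pi(p)$, lies the hard part. On the horosphere $\partial B_p \cong \R^{n-1}$ the group $P$ acts by Euclidean isometries, and because the cusp of $\Quotient{\H}{\G}$ has finite volume $P$ acts cocompactly there; a compact fundamental domain for this action meets only finitely many of the Euclidean traces $\g(H)\cap \partial B_p$. Since deep in the cusp $D$ is a $P$-Dirichlet domain, its horospherical cross-sections are such compact fundamental domains, so only finitely many $\g(H)$ reach the cusp region. Combining the compact thick part with the finitely many cusp regions proves that $D$ meets only finitely many $\g(H)$, while the horoball of the third paragraph gives the second assertion. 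The main obstacle to make rigorous is exactly this cusped case: one must verify that far enough into the cusp the Dirichlet domain for $\G$ reduces to one for $P$ and that $P$ acts cocompactly on the horospheres, so that the Euclidean traces of the hyperplanes are hit only finitely often.
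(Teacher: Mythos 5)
Your overall plan coincides with the paper's: reduce to the finitely many ideal points $p \in \partial D$ (local finiteness of the family $\{\g(H)\}$ disposes of the compact/thick part), and near each cusp exploit that $P = \Stab_{\G}(p)$ acts cocompactly by Euclidean isometries on $\partial \H \setminus \{p\}$. Your treatment of the cusps \emph{avoided} by the hypersurface is a legitimate variant of the paper's: you derive the protecting horoball from the ends of the finite-volume hypersurface $\Nn$ (a lift of an end exiting the cusp would have to limit at a point of $\G \cdot p$), where the paper instead gets it as a special case of a quantitative bound described below. Your version is sketchier but the idea is sound.

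The genuine gap is in the cusped case, i.e.\ when $I = \{\g \in \G : p \in \overline{\g(H)}\}$ is nonempty. Work in the upper half-space model with $p = \infty$. The hyperplanes with $\g \in I$ are vertical, and your horospherical cross-section argument does control those. But a hyperplane with $\g \notin I$ is a hemisphere of Euclidean radius $r$, which penetrates the cusp up to altitude exactly $r$: its trace on the horosphere at altitude $t$ is nonempty as soon as $r > t$. A compact fundamental domain on \emph{one fixed} horosphere meets only finitely many traces by local finiteness, but this says nothing about deeper horospheres: at each greater depth, new hemispheres of larger radius may appear, and the union over all depths could a priori be infinite. So as written, your argument does not show that the cusp region of $D$ meets only finitely many (indeed, eventually no) hyperplanes $\g(H)$ with $\g \notin I$. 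The paper closes exactly this hole with one step your proposal is missing: the group $P$ acts cocompactly by Euclidean isometries on $\partial \H \setminus \{\infty\} = \R^{n-1} \times \{0\}$ and permutes the footprint spheres of the hemispheres, so local finiteness in $\H$ forces the radii (equivalently the altitudes) of all $\g(H)$, $\g \notin I$, to be \emph{uniformly bounded}; hence a sufficiently deep horoball centred at $p$ meets only the $I$-hyperplanes, and since these are pairwise disjoint, hence parallel, vertical hyperplanes while $D$ sits in a chimney over a compact base ($p$ is a bounded parabolic point), only finitely many of them meet $D$. Note that this single altitude bound also yields your second assertion at once (take $I = \varnothing$); alternatively you could repair your cusped case by pushing your own ends argument further: hemispheres entering $B_p$ at unbounded depths would project to points of $\Nn$ exiting through the cusp at $\pi(p)$, and since the translates $\g B_p$ can be taken pairwise disjoint, the corresponding end-lifts inside $B_p$ itself would force those hyperplanes to end at $p$, contradicting $\g \notin I$. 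Either way, the uniform statement across all depths must be proved; it is the crux of the lemma and is absent from your proposal.
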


\begin{proof}
\par{
Comme l'application $H_{/\Delta} \rightarrow \H_{/\G}$ est un plongement propre, les hyperplans $(\g(H))_{\g \in \G}$ sont disjoints et forment une famille localement finie dans $\H$. De plus, l'action de $\G$ sur $\H$ est de covolume fini par conséquent, l'ensemble $\partial D$ est fini. Il suffit donc de regarder ce qu'il se passe près des points $p \in \partial D$.
}
\\
\par{
Soit $p \in \partial D$. Comme l'action de $\Gamma$ sur $\H$ est de covolume fini. Le point $p$ est un point parabolique borné, c'est à dire que le groupe $\textrm{Stab}_{\Gamma}(p)$ agit cocompactement sur $\partial \H - \{ p \}$. On note $I = \{ \g \in \G \,\mid \, p \in \overline{\g(H)}  \}$.
}
\\
\par{
 Nous allons nous placer dans le modèle du demi-espace de Poincaré. Supposons que $p = \infty$, une horoboule centrée en l'infini dans le modèle du demi-espace de Poincaré est l'ensemble des points d'altitudes supérieures à une constante. On peut trouver une horoboule centrée en $p$ et qui ne rencontre pas les hyperplans $\g(H)$ pour $\g \notin I$ si et seulement si les altitudes des hyperplans $\gamma(H)$ pour $\gamma \notin I$ sont bornées. L'altitude d'un hyperplan ne passant pas par l'infini dans le modèle du demi-espace de Poincaré est égale au rayon de la sphère qu'il définit sur $\partial \H  - \{ \infty \} = \R^{n-1} \times \{ 0\}$.  Le groupe $\textrm{Stab}_{\Gamma}(p)$ agit par isométrie (euclidienne) et cocompactement sur $\partial \H - \{ \infty \}$, les altitudes sont donc bornées. Il existe donc une horoboule $\E$ centrée en $p = \infty$ tel que si $\g(H) \cap \E \neq \varnothing$ alors  $p \in \overline{\g(H)}$.
}
\\
\par{
En particulier, si le point $p$ ne rencontre pas les hyperplans $\g(H)$ pour $\g \in \G$ alors il existe une horoboule centrée en $p$ qui ne rencontre pas les hyperplans $\g(H)$ pour $\g \in \G$. 
}
\\
\par{
De plus, les hyperplans $\g(H)$ pour $\g \in \G$ ne se rencontrent pas, par conséquent les hyperplans $\g(H)$ pour $\g \in I$ sont parallèles. Le groupe $\textrm{Stab}_{\Gamma}(p)$ préservent ces hyperplans et agit par isométrie (euclidienne) de façon cocompacte sur $\partial \H - \{ p \}$. Le domaine fondamental $D$ est donc inclus dans un cône $\mathfrak{C}$ de sommet $p$ et de base un compact de $\partial \H - \{ p \}$. Comme les $\g(H)$ pour $\g \in I$ sont parallèles et à distance minorée, le cône $\mathfrak{C}$  et donc $D$ ne rencontre qu'un nombre fini d'hyperplans $\g(H)$ pour $\g \in I$.
}


\end{proof}

\begin{defi}
Une \emph{ellisphère} est le bord d'un ellipsoïde.
\end{defi}

\begin{rema}\label{rem_horo}
Soient $\E$ un ellipsoïde de $\PP^n$ et $p$ un point de $\partial \E$. Soit $P'$ le stabilisateur de $p$ dans $\Aut(\E)$. Le groupe $P'$ est isomorphe au groupe des similitudes $\textrm{Sim}^+(\R^{n-1})$. Il possède donc un sous-groupe distingué $P$ isomorphe à $\textrm{Isom}^+(\R^{n-1})$. Il s'agit du sous-groupe des éléments paraboliques qui fixent $p$. Les orbites de $P$ agissant sur $\PP^n$ sont les ellisphères (privé de $p$) du faisceau d'ellisphères engendré par $\partial \E$ et $T_p \E$. Les orbites de $P$ agissant sur $\E$ sont les horosphères de centre $p$ du modèle projectif de l'espace hyperbolique. Il vient donc que toute horosphère d'un ellipsoïde est une ellisphère privé d'un point. La réciproque est par contre fausse. Toute ellisphère incluse avec tangence en un point dans un ellipsoïde n'est pas une horosphère de celle-ci.
\end{rema}

\begin{defi}
Soient $\O$ et $\O'$ deux ouverts proprement convexe de $\PP^n$, on dira que $\O$ est à \emph{l'intérieur} (resp. \emph{l'extérieur}) de $\O'$ lorsque $\O \subset \O'$ (resp. $\O' \subset \O$).
\end{defi}

\begin{lemm}\label{pos_elli}
Soit $\O$ un ouvert proprement convexe et $\G$ un sous-groupe discret de $\Aut(\O)$ qui fixe un point $p \in \partial \O$. Supposons que $\G$ préserve un ellipsoïde $\E$ tangent à $\O$ en $p$ et que l'action de $\G$ sur $\partial \O \setminus \{ p\}$ est cocompact. Alors, il existe un ellipsoïde $\E_{int}$ (resp. $\E_{ext}$) à l'intérieur (resp. l'extérieur) de $\O$. De plus,  $\E_{int}$ est une horoboule de  $\E_{ext}$.
\end{lemm}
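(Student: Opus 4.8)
The plan is to realize both ellipsoids as members of the pencil of horoballs of $\E$ centered at $p$, and to locate $\O$ inside this pencil by means of the cocompactness hypothesis. First I would choose an affine chart in which the tangent hyperplane $T_p\E$ is sent to infinity; then $\E$ becomes the convex region lying above a paraboloid, $p$ is the point at infinity in the axis direction, and the pencil of ellispheres generated by $\partial\E$ and $T_p\E$ (Remarque \ref{rem_horo}) is the family of vertical translates
$$\E_c = \Big\{\, x_n - \tfrac12|x'|^2 > c \,\Big\}, \qquad x=(x',x_n)\in\R^{n-1}\times\R .$$
By Remarque \ref{rem_horo} these are exactly the horoballs of $\E$ centered at $p$, and for $c>c'$ the inclusion $\E_c\subset\E_{c'}$ exhibits $\E_c$ as a horoball of $\E_{c'}$. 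Hence it suffices to produce reals $c_- < c_+$ with $\E_{c_+}\subset\O\subset\E_{c_-}$ and then set $\E_{int}=\E_{c_+}$, $\E_{ext}=\E_{c_-}$.

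The crucial point, which I expect to be the main obstacle, is to show that $\G$ preserves each horoball $\E_c$ \emph{individually}, i.e. that $\G$ lies in the pure parabolic subgroup $P\cong\textrm{Isom}^+(\R^{n-1})$ of $\Stab_{\Aut(\E)}(p)\cong\textrm{Sim}^+(\R^{n-1})$, and not merely in the full stabilizer. Equivalently, the dilation character $\lambda:\G\to\R_+^*$ must be trivial. If some $g\in\G$ had $\lambda(g)\neq 1$, then $g$ would be loxodromic in $\Aut(\E)$, with $p$ as one fixed point and a second fixed point $q\in\partial\E\setminus\{p\}$; since $g$ also preserves the properly convex $\overline\O$, its attracting and repelling fixed points both lie on $\partial\O$, which forces $q\in\partial\O\setminus\{p\}$, and then the iterates $g^{\mp k}$ would accumulate a neighbourhood of $q$ onto $q$, contradicting the properly discontinuous cocompact action of $\G$ on $\partial\O\setminus\{p\}$. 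Thus $\lambda\equiv 1$, and the pencil parameter $f(x)=x_n-\tfrac12|x'|^2$ is $\G$-invariant.

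Granting $\G\subset P$, I would finish as follows. Let $K\subset\partial\O\setminus\{p\}$ be a compact fundamental domain; since $f$ is $\G$-invariant and $\partial\O\setminus\{p\}=\G\cdot K$, the values of $f$ on $\partial\O\setminus\{p\}$ fill the compact interval $[m,M]:=f(K)$. For the outer ellipsoid, the set $\overline{\E_m}=\{f\ge m\}\cup\{p\}$ is convex and contains $\partial\O$; as a convex body whose boundary lies in a convex set is contained in that set, we get $\overline\O\subset\overline{\E_m}$, hence $\O\subset\E_m=:\E_{ext}$. For the inner ellipsoid, choose $y_0\in\O$ with $c_0:=f(y_0)>m$; since $\G\subset P$ acts cocompactly by Euclidean isometries on the shadow $\R^{n-1}$, the orbit $\G\cdot y_0$ is a cocompact net on the paraboloid $\{f=c_0\}=\partial\E_{c_0}$, and a convexity (Jensen) estimate on convex combinations of points of this net shows that its convex hull — which lies in $\O$ — contains $\E_{c_0+\delta}$ for some $\delta>0$. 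Setting $\E_{int}=\E_{c_0+\delta}$ yields $\E_{int}\subset\O\subset\E_{ext}$ with $c_0+\delta>m$, so $\E_{int}\subset\E_{ext}$ are two members of the same pencil and $\E_{int}$ is a horoball of $\E_{ext}$, as claimed.
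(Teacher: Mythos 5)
Your argument is correct and reaches the same squeeze as the paper --- trapping $\O$ between two members of the pencil of horoballs of $\E$ centred at $p$ (remarque \ref{rem_horo}) --- but by a genuinely different route. The paper's proof takes a convex, locally finite fundamental domain $D$ for the action of $\G$ on $\O$ (théorème \ref{lee}), observes that $\partial D\cap\partial\O$ is $\{p\}$ together with a compact subset of $\partial\O\setminus\{p\}$, forms the half-cone $\mathfrak{C}$ of vertex $p$ over $D$, and obtains $\E_{int}$ (resp.\ $\E_{ext}$) by taking an ellisphere of the pencil close enough to (resp.\ far enough from) $p$, equivariance of the pencil globalizing the local picture; and it simply \emph{asserts} that $\G$ consists only of parabolic elements. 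You instead work in the paraboloid chart with the $\G$-invariant height $f$, trap $f(\partial\O\setminus\{p\})$ in the compact set $f(K)$ for a compact fundamental domain $K$ of the boundary action, deduce $\O\subset\E_m$ by a vertical-line argument (for $x\in\O$ with $f(x)<m$ the downward vertical ray must exit $\O$ at a finite boundary point of height $<m$, since both ends of a vertical line are the single projective point $p$ and $\O$ contains no affine line), and get the inner horoball from the Jensen estimate on the orbit hull; this avoids Lee's theorem entirely, and --- a genuine plus --- you actually \emph{prove} the dilation-freeness $\lambda\equiv 1$ that the paper only asserts, via lemma \ref{vp}-type dynamics. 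One caveat: your contradiction for $\lambda(g)\neq 1$ invokes proper discontinuity of the action on $\partial\O\setminus\{p\}$, which is not literally among the hypotheses; cocompactness alone does not exclude a dilation, since for $g$ a dilation of ratio $\lambda$ the compact set $K=\{0\}\cup\{1\leqslant |x|\leqslant \lambda\}$ has $\langle g\rangle$-translates covering all of $\R^{n-1}$. This stronger reading is nevertheless the right one: without it the statement can fail (a $\log$-periodic, dilation-invariant convex perturbation of the paraboloid admits no squeeze by a single pencil), and it holds in the paper's application, where $p$ is a bounded parabolic point of a finite-covolume group. You also use implicitly that $\overline{\O}\cap T_p\E=\{p\}$, so that $f$ is defined and bounded on all of $\partial\O\setminus\{p\}$; this follows from the same cocompactness (otherwise $f$ would be unbounded on the boundary) and is glossed at exactly the same level in the paper's own proof.
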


\begin{proof}
Soit $A$ une carte affine contenant l'adhérence de l'ouvert proprement convexe $\O$. Soit $D$ un domaine fondamental pour l'action de $\G$ sur $\O$ (Théorème \ref{lee}). L'ensemble $\partial D \cap \partial \O$ est composé de $\{p\}$ et d'un compact de $\partial \O \setminus \{ p\}$. On considère $\mathfrak{C}$ le demi-cône de $A$ de sommet $p$ engendré par $D$.

Les ellisphères du faisceau d'ellisphères engendrés par l'ellisphère $\partial \E$ et l'hyperplan tangent $T_p \partial \O$ sont préservées par $\G$. Le groupe $\Aut(\E)$ est conjugué au groupe $\SO$. Le groupe $\G$ est un sous-groupe de $\Aut(\E)$ composé uniquement d'éléments paraboliques qui fixent $p$.

Par conséquent, pour trouver un ellipsoïde $\E_{int}$ (resp. $\E_{ext}$)  à l'intérieur (resp. l'extérieur) de $\O$. Il suffit de voir que si l'ellisphère $\partial \E'$ du faisceau est suffisament proche  (resp. éloigné) de $p$ alors $\E' \cap \mathfrak{C}$ est inclus dans $D$ (resp. $\partial \E' \cap \mathfrak{C} \cap \O = \varnothing$ ). On peut donc trouver un ellipsoïde $\E_{int}$ (resp. $\E_{ext}$), en prenant un ellipsoïde suffisament proche (resp. éloigné) de $p$.
\end{proof}


\begin{prop}\label{dom_fond_proj}
Il existe un domaine fondamental $D_t$ pour l'action de $\Lambda_t$ sur $\mho_t$ tel que pour tout point $p \in \partial D_t \cap \mho_t$ il existe deux ellipsoïdes $\F_p$ et $\F'_p$ et les points suivants sont vérifiés:
\begin{enumerate}
\item $D_t$ est connexe et c'est une réunion finie de convexe.
\item $\partial D_t$ est fini.
\item $\F_p \subset \mho_t \subset \F'_p$
\item $\partial \F_p \cap \partial\mho_t= \partial \F'_p \cap \partial\mho_t = \{ p \}$
\item $\F_p$ est une horoboule de $\F'_p$.
\end{enumerate}
\end{prop}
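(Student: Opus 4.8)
Le plan est de transporter par la développante un domaine fondamental de Dirichlet de la structure hyperbolique de départ, puis de construire les ellipsoïdes de protection cusp par cusp.

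J'identifierais d'abord $\mho_0$ à $\H^n$ et je noterais $H$ un relevé de $\Nn_0$, de sorte que $\Delta=\Stab_{\Lambda_0}(H)$ vérifie les hypothèses du lemme \ref{lem_vol} : l'hypersurface $\Nn_0$ est de volume fini et proprement plongée d'après la construction de la partie \ref{constr_depa}. Pour un point $x_0\in H$ de stabilisateur trivial dans $\Lambda_0$, je prendrais le domaine de Dirichlet $D$ de $\Lambda_0$ sur $\mho_0$ basé en $x_0$ ; le lemme \ref{lem_vol} garantit que $D$ ne rencontre qu'un nombre fini de murs $\g(H)$ et que $\partial D$ est fini. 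Je poserais alors $D_t:=\dev_t(D)$ à l'aide de l'homéomorphisme $\pi_1(\M)$-équivariant $\dev_t:\overline{\mho_0}\to\overline{\mho_t}$ du théorème \ref{theo_kapo} ; comme c'est l'image d'un domaine fondamental par un homéomorphisme équivariant, $D_t$ est un domaine fondamental pour l'action de $\Lambda_t$ sur $\mho_t$. Les points (1) et (2) en résultent : $D$ est découpé par les murs (en nombre fini) qu'il rencontre en un nombre fini de morceaux convexes, et la restriction de $\dev_t$ à chaque chambre étant projective, elle envoie chaque morceau sur un convexe ; ainsi $D_t$ est une réunion finie de convexes, connexe car $D$ l'est, et $\partial D_t=\dev_t(\partial D)$ est fini.

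Pour les points (3)--(5), je fixerais $p\in\partial D_t$, image d'un cusp $\overline{p}\in\partial D$ de $\M_0$, et j'utiliserais la dichotomie du lemme \ref{lem_vol}. Si $\overline{p}$ n'est dans la fermeture d'aucun mur, il possède une horoboule $B$ évitant tous les murs, donc incluse dans une unique chambre $\C$ ; sur $\C$ la développante $\dev_t$ coïncide avec une application projective $g$, si bien que $\Stab_{\Lambda_t}(p)=g\,\Stab_{\Lambda_0}(\overline{p})\,g^{-1}$ préserve l'ellipsoïde $g(\mho_0)$, tangent à $\mho_t$ en $p$, et agit cocompactement sur $\partial\mho_t\setminus\{p\}$ puisque $\overline{p}$ est un point parabolique borné de $\mho_0$ et que $\dev_t$ est un homéomorphisme équivariant. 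Le lemme \ref{pos_elli} livre alors directement les deux ellipsoïdes $\F_p\subset\mho_t\subset\F'_p$, avec $\F_p$ horoboule de $\F'_p$.

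La difficulté principale sera le cas des cusps de $\Nn_0$ (qui n'apparaissent que pour $n\geqslant 3$, l'hypersurface $\Nn_0$ étant compacte pour $n=2$). Un tel point $p$ est contenu dans une infinité de murs dont l'intersection commune est un sous-espace projectif $L$ de codimension $2$, et toutes les transformations de pliage agissant près de $p$ fixent $p$, fixent $L$ point par point et préservent l'hyperplan tangent $T_p\partial\mho_0$ (voir le paragraphe \ref{dessin_pliage}). Grâce au lemme \ref{lem_vol}, près de $p$ le domaine $D$ est contenu dans un cône de sommet $p$ au-dessus d'un compact de $\partial\mho_0\setminus\{p\}$ et ne rencontre qu'un nombre fini de ces murs ; par cocompacité de $\Stab_{\Lambda_t}(p)$, l'ouvert $\mho_t$ est donc, au voisinage de $p$, une réunion localement finie d'images de secteurs de $\mho_0$ par une famille de conjuguées des $A_{H_i,p_i,\pm t}$ ne prenant qu'un nombre fini de valeurs. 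Comme ces transformations fixent $p$ et préservent $T_p\partial\mho_0$, je chercherais à coincer $\partial\mho_t$ au voisinage de $p$ entre deux ellisphères du faisceau engendré par une ellisphère passant par $p$ et l'hyperplan $T_p\partial\mho_0$ (remarque \ref{rem_horo}), en prenant $\F'_p$ assez grand pour contenir ces secteurs et $\F_p$ une horoboule de $\F'_p$ assez petite pour rester dans $\mho_t$. Le véritable obstacle sera de contrôler uniformément le pliage au voisinage du lieu $L$ de codimension $2$ pour que ce sandwich subsiste malgré l'accumulation des murs : c'est ici que la situation diédrale et la finitude fournie par le lemme \ref{lem_vol} devraient être décisives.
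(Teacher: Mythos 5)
Votre mise en place est celle du texte : transport du domaine de Dirichlet $D$ fourni par le lemme \ref{lem_vol} via l'homéomorphisme équivariant $\dev_t$ du théorème \ref{theo_kapo}, les points (1) et (2) venant de la finitude du nombre de murs rencontrés par $D$ et du fait que $\dev_t$ est projective sur chaque chambre ; et dans le cas où $p$ n'est dans l'adhérence d'aucun mur, votre argument (image projective de l'horoboule de protection, puis lemme \ref{pos_elli}, la cocompacité sur $\partial\mho_t\setminus\{p\}$ se transportant par équivariance) coïncide avec la démonstration du papier.

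En revanche, pour le cas crucial des cusps de $\Nn_0$, vous ne faites qu'esquisser une stratégie de « sandwich » entre deux ellisphères d'un faisceau, et vous reconnaissez vous-même que l'étape clef — le contrôle uniforme du pliage au voisinage du lieu $L$ de codimension $2$ où s'accumulent les murs — reste ouverte. C'est une lacune véritable : l'image $\dev_t(\E_p)$ n'est qu'un ellipsoïde par morceaux avec une infinité de morceaux, et ni la situation diédrale ni le lemme \ref{lem_vol} ne fournissent tels quels l'estimée quantitative dont votre sandwich aurait besoin. Le papier contourne entièrement cette difficulté géométrique par un argument algébrique : d'après le lemme \ref{pos_elli}, il suffit de montrer que $\Lambda^t_p=\Stab_{\Lambda_t}(p)$ préserve un ellipsoïde tangent en $p$, c'est-à-dire une forme quadratique de signature $(n,1)$. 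Quitte à passer à un sous-groupe d'indice fini, $\Lambda^t_p$ est isomorphe à $\Z^{n-1}$ ; il fixe $p$ et préserve $T_p \partial \mho_0$, donc agit par transformations affines sur l'espace affine $\mathbb{A}^{n-1}_p$ des directions en $p$ ; le sous-groupe $\Lambda^t_{p,H}$ des éléments préservant $H$, virtuellement $\Z^{n-2}$, n'est pas modifié par le pliage et agit euclidiennement sur un hyperplan ; le lemme \ref{vp} force la valeur propre associée à $p$ à valoir $1$ pour tout élément de $\Lambda^t_p$ (sinon cet élément aurait deux points fixes distincts sur $\partial\mho_t$, alors que $p$ est son unique point fixe), donc l'action affine est de déterminant $1$ ; le lemme \ref{lem_eucli} produit alors un produit scalaire invariant, donc l'ellipsoïde invariant cherché, et le lemme \ref{pos_elli} livre $\F_p$ et $\F'_p$. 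Sans un substitut de cet argument (ou une preuve effective de votre contrôle uniforme), votre démonstration ne se referme pas.
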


\begin{proof}
L'image $D_t$ de $D$ par $\dev_t$ est un domaine fondamental pour l'action de $\Lambda_t$ sur $\mho_t$. Il vérifie les deux premiers points car $D$ ne rencontre qu'un nombre fini d'hyperplans $\g(H)$ pour $\g \in \G$ et l'application $\dev_t$ restreinte à n'importe qu'elle composante connexe de $\displaystyle{\mho_0 \setminus \bigcup_{\g \in \Lambda_0} \g H}$ est une application projective.

Si le point $p \in \partial D_t \cap \partial \mho_t$ et $\displaystyle{p \notin \bigcup_{\g \in \Lambda_t} \g H}$, alors l'ellipsoïde $\E_p$ fourni par le lemme \ref{lem_vol} est inclus dans l'une des composantes connexes de $\displaystyle{\mho_0 \setminus \bigcup_{\g \in \Lambda_0} \g H}$. Mais l'application $\dev_t$ restreinte à n'importe qu'elle composante connexe de $\displaystyle{\mho_0 \setminus \bigcup_{\g \in \Lambda_0} \g H}$ est une application projective. L'ellipsoïde $\F_p = \dev_t(\E_p)$ convient. Comme l'ellipsoïde $\F_p$ est préservé par $\Stab_{\Lambda_t}(p)$ , le lemme \ref{pos_elli} montre qu'il existe un ellipsoïde $\F'_p$ à l'extérieur de $\O$ et tel que $\F_p$ est une horoboule de centre $p$ de $\F'_p$.

Si le point $p \in \partial D_t \cap \partial \mho_t$ et  $\displaystyle{p \in \bigcup_{\g \in \Lambda_t} \g H}$, alors la situation est un peu plus complexe car 
l'ellipsoïde $\E_p$ fourni par le lemme \ref{lem_vol} est inclus dans une infinité de composantes connexes de $\displaystyle{\mho_0 \setminus \bigcup_{\g \in \Lambda_0} \g H}$. L'image de $\E_p$ par $\dev_t$ n'est donc pas un ellipsoïde, c'est ellipsoïde par morceaux. 

Nous allons montrer que le groupe $\Stab_{\Lambda_t}(p)$ préserve un ellipsoïde, ainsi le lemme \ref{pos_elli} montrera qu'il existe deux ellipsoïdes $\F_p$ et $\F'_p$ qui solutionnent notre problème. Montrer que  le groupe $\Stab_{\Lambda_t}(p)$ préserve un ellipsoïde revient à montrer qu'il préserve une forme quadratique de signature $(n,1)$.

Le groupe $\Lambda^t_p = \Stab_{\Lambda_t}(p)$ est virtuellement isomorphe à $\Z^{n-1}$, c'est le groupe fondamental du cusp de $M_n$ associé à $p$. On note $\Lambda^t_{p,H}$ le sous-groupe de $\Lambda^t_p$ des éléments qui préserve $H$. Il est virtuellement isomorphe à $\Z^{n-2}$. Le groupe $\Lambda^t_{p,H}$ n'est pas modifié pendant le pliage, c'est un sous-groupe de $\pi_1(\Nn_0)$.

 On souhaite trouver un ellipsoïde à l'extérieur et à ellipsoïde à l'intérieur à $\mho_t$ et tangent en $p$ à $\partial \mho_t$. Pour cela, il suffit de trouver (lemme \ref{pos_elli}) de trouver un ellipsoïde préservé par un sous-groupe d'indice fini de $\Stab_{\Lambda_t}(p)$. On peut donc supposer que $\Stab_{\Lambda_t}(p)$ est isomorphe à $\Z^{n-1}$.

%

Le groupe $\Lambda^t_p$ préserve le point $p$, il agit donc sur l'espace projectif $\PP^{n-1}_p$ des droites projectives de $\PP^n$ passant par $p$. Il s'agit de l'espace projectif des droites de l'espace vectoriel quotient $\R^{n+1}/p$. Il préserve aussi l'hyperplan $T_p \partial \mho_0$, par conséquent il agit par transformations affines sur l'espace affine $\mathbb{A}^{n-1}_p = \PP^{n-1}_p \setminus (\PP^{n-1}_p \cap T_p \partial \mho_0)$. Cet espace affine est dirigé par l'espace vectoriel quotient $\overline{T_p \partial \mho_0}/p$, où $\overline{T_p \partial \mho_0}$ est le relevé de $T_p \partial \mho_0$ à $\R^{n+1}$.

Montrer que le groupe $\Lambda^t_p$ préserve un ellipsoïde tangent à $\partial \O$ en $p$ revient à montrer que l'action de $\Lambda^t_p$ préserve un produit scalaire sur $\mathbb{A}^{n-1}_p$. Le lemme \ref{lem_eucli} ci-dessous montre qu'il suffit de montrer que le groupe $\Lambda^t_p$ agit par transformation affine de déterminant 1.

Comme les éléments de $\Lambda^t_p$ viennent d'un sous-groupe de $\ss$, il s'agit de montrer que $p$ est un vecteur propre dont la valeur propre associé est 1. Comme $p$ est l'unique point fixe des éléments de $\Lambda^t_p$, le lemme \ref{vp} ci-dessous montre que la valeur propre associé à $p$ est 1.



\end{proof}

\begin{lemm}\label{lem_eucli}
Soit $\G$ un sous-groupe discret agissant proprement et cocompactement sur $\R^m$ par transformation affine de déterminant 1.
Supposons que $\G$ est virtuellement isomorphe à $\Z^m$ et possède un sous-groupe  $\G'$ distingué virtuellement isomorphe à $\Z^{m-1}$ qui agit proprement et cocompactement sur un hyperplan $H$ par transformation euclidienne. Alors, il existe un produit scalaire sur $\R^m$ préservé par $\G$, autrement dit $\G$ agit par transformation euclidienne sur $\R^m$.
\end{lemm}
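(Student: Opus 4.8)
Le plan est de ramener l'énoncé à la compacité relative, dans $\mathrm{GL}_m(\R)$, de l'image $L(\G)$ du morphisme « partie linéaire » $L:\G \rightarrow \mathrm{GL}_m(\R)$. En effet, si l'adhérence $\overline{L(\G)}$ est compacte, c'est un groupe compact, et en moyennant un produit scalaire quelconque de $\R^m$ contre sa mesure de Haar on obtient un produit scalaire invariant par $L(\G)$ ; les parties linéaires de $\G$ sont alors orthogonales, donc $\G$ agit par transformations euclidiennes. Tout le travail consiste donc à établir que $L(\G)$ est borné. Comme un produit scalaire invariant par un sous-groupe d'indice fini se re-moyenne en un produit scalaire invariant par $\G$, on peut librement remplacer $\G$ par un sous-groupe d'indice fini, et donc supposer $\G \simeq \Z^m$ et $\G' \simeq \Z^{m-1}$.

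Ensuite je montrerais que $\G$ préserve la direction $\overrightarrow{H}$ de $H$ et que les parties linéaires y sont triviales. Le théorème de Bieberbach appliqué au groupe cristallographique $\G'$ agissant sur $H$ permet, après le passage à l'indice fini ci-dessus, de voir $\G'$ comme un réseau de translations $\Lambda$ de $\overrightarrow{H}$, de rang $m-1$. Comme $\G'$ est distingué et que $\G$ est abélien, pour $\g \in \G$ et pour $t \in \G'$ de vecteur de translation $w$, la relation $\g t \g^{-1} = t$ impose $L(\g)\,w = w$ ; ces vecteurs $w$ engendrant $\overrightarrow{H}$, on obtient $L(\g)|_{\overrightarrow{H}} = \mathrm{Id}$ pour tout $\g \in \G$. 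En particulier $\overrightarrow{H}$ est préservé et, dans une base adaptée à $\overrightarrow{H} \subset \R^m$, chaque partie linéaire s'écrit
$$
L(\g) = \left( \begin{array}{cc} \mathrm{Id} & v(\g) \\ 0 & \lambda(\g) \end{array} \right),
$$
avec $v(\g) \in \overrightarrow{H}$ et $\lambda(\g) \in \R^*$. La condition de déterminant $1$ donne $\lambda(\g)=1$ pour tout $\g$ ; chaque $L(\g)$ est alors unipotent, entièrement décrit par le « cisaillement » $v(\g)$, et l'application $\g \mapsto v(\g)$ est un morphisme de $\G$ dans le groupe additif $\overrightarrow{H}$, nul sur $\G'$, qui se factorise donc par la composante transverse $\G/\G' \simeq \Z$.

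Il reste le cœur de la démonstration : montrer que ce cisaillement est identiquement nul, car alors $L(\G) = \{\mathrm{Id}\}$ est trivialement compact et le moyennage du premier paragraphe conclut. C'est l'étape que j'anticipe comme la principale difficulté, et c'est précisément ici que doivent intervenir \emph{toutes} les hypothèses, et non les seules propriétés de $\G'$ : la condition de déterminant $1$ ne suffit pas à elle seule (elle autorise a priori un cisaillement unipotent non trivial, correspondant à une structure affine plate complète mais non euclidienne sur le tore-quotient). Il faut réellement confronter la propreté et la cocompacité de l'action de $\G$ tout entier sur $\R^m$ — à travers l'existence d'un domaine fondamental compact et la structure du quotient comme fibré en tores plats $\overrightarrow{H}/\Lambda$ au-dessus d'un cercle — à la rigidité provenant du sous-groupe euclidien $\G'$ de corang $1$, pour forcer l'annulation de $v$ et conclure.
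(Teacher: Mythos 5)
Your reduction coincides step for step with the paper's own proof: pass to a finite-index subgroup so that $\G \simeq \Z^m$ and $\G' \simeq \Z^{m-1}$ (the paper writes $\G = \G' \oplus \langle \delta \rangle$ and averages over cosets implicitly, as you do explicitly); use Bieberbach --- the paper invokes the Zariski closure of $\G'$, which it identifies with the group of translations of $\overrightarrow{H}$ --- to treat $\G'$ as a lattice of translations; deduce from normality and commutation that every linear part restricts to the identity on $\overrightarrow{H}$; and kill the transverse eigenvalue with the determinant-one hypothesis, so that each linear part is a unipotent shear $v(\g)$ transverse to $\overrightarrow{H}$. Up to that point you and the paper agree entirely. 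But exactly where you stop and declare the remaining step --- proving $v \equiv 0$ --- to be the heart of the matter, the paper concludes in one clause: ``La valeur propre 1 est donc de multiplicité $m$, autrement dit $\delta$ est une translation.'' That inference is precisely the non sequitur you flagged: a nontrivial unipotent shear also has $1$ as its only eigenvalue and is a translation on no conjugate of the Euclidean group. So your proposal is incomplete at the crucial step, but you have correctly located a step for which the paper, too, supplies no argument.

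Moreover, your hoped-for strategy --- forcing $v = 0$ from the properness and cocompactness of the action of $\G$ on $\R^m$ --- cannot succeed, because the obstruction you yourself describe satisfies \emph{all} the stated hypotheses. Take $m = 2$, $H = \R \times \{0\}$, $t(x,y) = (x+1,\, y)$ and $\delta(x,y) = (x + vy,\, y+1)$ with $v \neq 0$. Then $\G = \langle t, \delta \rangle \simeq \Z^2$ is discrete and acts properly, freely and cocompactly on $\R^2$ by determinant-one affine maps (the quotient is the classical complete affine, non-Euclidean torus of Kuiper); $\G' = \langle t \rangle \simeq \Z$ is normal and acts properly cocompactly on $H$ by Euclidean transformations; yet the linear part of $\delta$ is a nontrivial unipotent, hence lies in no conjugate of $\mathrm{O}(2)$, and $\G$ preserves no scalar product. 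The lemma as isolated is therefore false, and the gap is not closable from its hypotheses alone: one must import extra structure from the application --- for instance that $\Stab_{\Lambda_t}(p)$ arises by bending a parabolic subgroup and preserves the properly convex open set $\mho_t$, whereas the shear group above only preserves an affine chart, which is convex but not properly convex --- to rule out the affine shear and make the lemma usable in the proof of Proposition \ref{dom_fond_proj}. Neither your proposal nor the paper's proof does this; your diagnosis of where the difficulty lies is the correct one.
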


\begin{proof}
On peut supposer que $\G$ est isomorphe à $\Z^m$,  que $\G'$ est isomorphe à $\Z^{m-1}$ et qu'il existe un élément $\delta \in \G$ tel que $\G=  \G' \oplus <\delta>$. L'adhérence de Zariski du groupe $\G'$ est le groupe des translations préservant l'hyperplan $H$. Un calcul facile montre que $\forall \overrightarrow{u} \in \overrightarrow{H}, \delta t_{\overrightarrow{u}} \delta^{-1} = t_{\overrightarrow{\delta} (\overrightarrow{u})}$. L'élément $\delta$ commute avec les éléments de $\G'$ par conséquent la partie linéaire de $\delta$ sur la direction $\overrightarrow{H}$ est l'identité.

La valeur propre 1 est donc de multiplicité $m-1$ mais l'élément $\delta$ est de déterminant 1 par conséquent 1 est une valeur propre de multiplicité $m$, autrement dit $\delta$ est une translation. Ceci montre que $\G$ préserve un produit scalaire sur $\R^m$.

\end{proof}

\begin{lemm}\label{vp}
Soit $\g \in \ss$. Supposons que $\g$ préserve un ouvert proprement convexe $\O$ de $\PP^n$ alors le rayon spectral $\rho(\g)$ de $\g$ est valeur propre de $\g$. De plus, si $\rho(\g) \neq 1$ alors il existe deux points distincts fixés par $\g$ sur le bord $\partial \O$ de $\O$. 
\end{lemm}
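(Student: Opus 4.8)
The plan is to reduce the statement to the linear algebra of a cone-preserving map and run a Perron--Frobenius argument. Since $\O$ is properly convex, I work on the sphere $\S$ as in Section~\ref{sec_conv}: the set $C=\overline{\pi^{-1}(\O)}\subset\R^{n+1}$ is a closed convex cone with nonempty interior which is \emph{salient} (it contains no line, because $\overline{\O}$ sits in an affine chart), and the hypothesis that $\g$ preserves $\O$ means exactly that $\g(C)=C$. All the fixed-point claims can then be read off from eigenvectors of $\g$ lying in $C$, since a ray $\R_{>0}v\subset C$ projects to a point of $\overline{\O}$.

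First I would prove that $\rho(\g)$ is an eigenvalue of $\g$; this is the Perron--Frobenius theorem for proper cones. The mechanism: choose a linear form $f$ that is positive on $C\setminus\{0\}$ (possible since $C$ is salient), so that $B=C\cap\{f=1\}$ is a compact convex base of the cone. The normalized map $x\mapsto \g x/f(\g x)$ is a well-defined continuous self-map of $B$, and Brouwer's fixed point theorem yields $v_+\in C$ with $\g v_+=\lambda v_+$ and $\lambda=f(\g v_+)>0$; so $\g$ has a positive real eigenvalue with eigenvector in the cone. The classical refinement, that this cone eigenvalue is in fact the \emph{largest} modulus, i.e. $\lambda=\rho(\g)$ (Collatz--Wielandt), I would either cite or derive from the resolvent; I expect this identification to be the one genuinely technical input of the lemma. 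Granting it, $\rho(\g)$ is a positive eigenvalue with eigenvector $v_+\in C$, hence $[v_+]\in\overline{\O}$.

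For the second assertion, suppose $\rho(\g)\neq 1$. The inverse $\g^{-1}$ also preserves $C$, so applying the first part to $\g^{-1}$ gives $v_-\in C$ with $\g v_-=\rho(\g^{-1})^{-1}v_-$, where $\rho_-:=\rho(\g^{-1})^{-1}$ is the smallest modulus of an eigenvalue of $\g$ and $\rho_+:=\rho(\g)$ the largest. Now I use $\det\g=1$: the product of the moduli of all eigenvalues equals $1$, so $\rho_-\leq 1\leq\rho_+$, and if $\rho_-=\rho_+$ then every eigenvalue has modulus $\rho_+$, forcing $\rho_+^{\,n+1}=1$ and $\rho_+=1$. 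Therefore $\rho(\g)\neq 1$ gives $\rho_-\neq\rho_+$, so $v_+$ and $v_-$ are eigenvectors for distinct eigenvalues, hence linearly independent, and $[v_+]\neq[v_-]$ are two distinct fixed points of $\g$ in $\overline{\O}$.

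It remains to place both points on $\partial\O$. Here I would invoke properness: by the Fait of Section~\ref{base}, $\Aut(\O)$ acts properly on $\O$, so the stabilizer of any interior point is compact. If, say, $[v_+]\in\O$, then $\g$ fixes an interior point and lies in a compact subgroup of $\ss$; averaging a scalar product shows every element of such a subgroup is diagonalizable over $\mathbb{C}$ with all eigenvalues of modulus $1$, whence $\rho(\g)=1$, contrary to hypothesis. The same argument rules out $[v_-]\in\O$. Thus $[v_+]$ and $[v_-]$ both lie in $\partial\O$, giving the two required fixed points. The main obstacle is the cone Perron--Frobenius step of the second paragraph; everything else is bookkeeping with eigenvalue moduli together with the properness of the $\Aut(\O)$-action.
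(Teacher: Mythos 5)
Your argument is correct in substance, but it takes a genuinely different route from the paper's, which is much more laconic: for the first claim the paper simply cites lemme 3.2 de \cite{MR2195260} --- exactly the cone Perron--Frobenius statement you identify as the one technical input --- so your Brouwer-plus-resolvent sketch is a self-contained substitute for that citation (and you are right that Brouwer alone only produces \emph{some} positive eigenvalue with eigenvector in the cone, e.g.\ a diagonal map preserving the positive octant can have Brouwer fixed points on non-dominant eigenrays, so the Collatz--Wielandt/resolvent identification really is needed). For the second claim the two proofs diverge: the paper argues dynamically, taking $x \in \O$ and asserting that $\g^n \cdot x$ converges to a point $x^+ \in \partial \O$ spanned by a $\rho(\g)$-eigenvector, then doing the same with $\g^{-1}$ to get $x^-$; you argue statically, applying the first part to $\g^{-1}$, using $\det \g = 1$ to force $\rho_- < 1 < \rho_+$, and invoking properness of the $\Aut(\O)$-action to expel both eigendirections to $\partial \O$ via the compact-stabilizer argument. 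What your version buys is that it avoids the orbit-convergence claim, which the paper leaves unjustified and which itself requires the cone structure when the dominant eigenvalue is not simple or carries a Jordan block; the price is the extra (but easy) compact-stabilizer step. One inaccuracy you should patch: preserving $\O \subset \PP^n$ does \emph{not} ``mean exactly'' that $\g(C) = C$, since $\g$ may exchange the two opposite cones above $\O$ --- for instance $-\mathrm{id} \in \ss$ when $n$ is odd, for which the first claim of the lemma literally fails ($\rho = 1$ is not an eigenvalue of $-\mathrm{id}$), and for which your map $x \mapsto \g x / f(\g x)$ is undefined because $f(\g x) < 0$. So either replace $\g$ by the lift $\pm\g$ that does preserve $C$ (harmless for the projective conclusions once $\rho(\g) \neq 1$), or note that in the paper's application (elements stabilizing a cusp point) the cone is indeed preserved; the paper inherits the same convention silently through its citation.
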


\begin{proof}
Le premier point est le contenu du lemme 3.2 de \cite{MR2195260}. Pour montrer le second point, supposons que $\rho(\g) >1$. On se donne $x \in \O$, la suite $\g^n \cdot x$ converge vers un point $x^+ \in \partial \O$ et le point $x^+$ est une droite propre associé à la valeur propre $\rho(\g)$ pour $\g$. De plus, on a  $\rho(\g^{-1}) <1$, on obtient donc un autre point $x^- \in \partial \O$ par le même procédé. Clairement ces deux points sont distincts.
\end{proof}

\subsection{Conservation de la finitude du volume}

Nous pouvons presque montrer les deux corollaires suivants.

\begin{coro}
Le pliage d'une variété hyperbolique de volume fini le long d'une hypersurface totalement géodésique définit une déformation de structure projective proprement convexe de volume fini.
\end{coro}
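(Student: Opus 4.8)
The plan is to reduce the finiteness of the covolume to the finiteness of the Busemann measure of a single fundamental domain, and then to control that measure near the cusps by squeezing $\mho_t$ between the two ellipsoids furnished by Proposition~\ref{dom_fond_proj}. First I would use the domain $D_t$ of Proposition~\ref{dom_fond_proj}, which is a locally finite fundamental domain for the action of $\Lambda_t$ on $\mho_t$ (Théorème~\ref{lee}); for such a domain the covolume $\mathrm{Vol}(\mho_t/\Lambda_t)$ equals $\mu_{\mho_t}(D_t)$. By Proposition~\ref{dom_fond_proj} the set $D_t$ is a finite union of convex pieces and $\partial D_t$ is finite, so $\overline{D_t}$ meets $\partial \mho_t$ only in the finite set of cusp points $p$. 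It therefore suffices to show that $\mu_{\mho_t}$ is finite on a relatively compact core and on a neighbourhood of each such $p$.

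For the splitting, I would attach to each cusp point $p\in\partial D_t\cap\partial\mho_t$ the inner ellipsoid $\F_p\subset\mho_t$ tangent to $\partial\mho_t$ at $p$ given by Proposition~\ref{dom_fond_proj}, and inside it a genuine horoball $\mathcal H_p$ of $\F_p$ centred at $p$ (an ellisphere-bounded cusp neighbourhood). Because $\partial D_t$ is finite, these horoballs can be chosen so that $D_t\setminus\bigcup_p\mathcal H_p$ is relatively compact in $\mho_t$; on this compact core the Busemann density $\mathrm{Vol}(B_x(1))^{-1}$ is continuous, hence bounded, so its $\mu_{\mho_t}$-measure is finite.

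The heart of the matter, and the step I expect to be the main obstacle, is the estimate on each cusp region $D_t\cap\mathcal H_p$. Here I would invoke the monotonicity of the Busemann measure under inclusion of properly convex sets: from the defining formula $\mu_\O(\mathcal A)=\int_{\mathcal A}\mathrm{Vol}(B_x(1))^{-1}\,d\mathrm{Vol}$ one sees that enlarging $\O$ enlarges the Finsler unit balls $B_x(1)$ and hence lowers the density, so $\F_p\subset\mho_t$ gives $\mu_{\mho_t}(A)\leqslant\mu_{\F_p}(A)$ for every Borel set $A\subset\F_p$. Applying this to $A=D_t\cap\mathcal H_p$ reduces the problem to bounding $\mu_{\F_p}(D_t\cap\mathcal H_p)$, a volume computed in the ellipsoid $\F_p$, i.e. in a genuine copy of $\H^n$. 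Now $\Stab_{\Lambda_t}(p)$ is the cusp group: it preserves $\F_p$ and acts on it as a parabolic lattice fixing the boundary point $p$, permuting the ellispheres of the pencil through $p$, which are exactly the horospheres of $\F_p$ centred at $p$. Since $D_t$ lies in a single fundamental domain for $\Lambda_t$ and $\mathcal H_p$ is a horoball, $D_t\cap\mathcal H_p$ is contained in a fundamental domain for $\Stab_{\Lambda_t}(p)$ acting on the horoball $\mathcal H_p$, whose $\F_p$-volume is that of a hyperbolic cusp, hence finite. Summing the finite core contribution and the finitely many finite cusp contributions gives $\mu_{\mho_t}(D_t)<\infty$, that is, the folded structure has finite volume. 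The delicate points to verify are the monotonicity inequality and the fact that $\Stab_{\Lambda_t}(p)$ really acts on $\F_p$ as a lattice on each horosphere; both rest on $p$ being a cusp of the finite-volume hyperbolic manifold $\M_0$ and on the horoball structure recorded in Proposition~\ref{dom_fond_proj} and Lemme~\ref{lem_vol}.
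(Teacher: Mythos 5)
Your proposal is correct and follows essentially the same route as the paper: since $\partial D_t$ is finite, it suffices to bound the volume near each $p\in\partial D_t\cap\partial\mho_t$, where a small enough neighbourhood satisfies $V_p\cap D_t\subset\F_p$, and the comparison $\mu_{\mho_t}(V_p\cap D_t)\leqslant\mu_{\F_p}(V_p\cap D_t)$ of Proposition \ref{compa} reduces everything to the finiteness of a hyperbolic cusp volume inside the ellipsoid $\F_p$. The only difference is that the paper invokes this last finiteness as well known, whereas you justify it explicitly via the action of $\Stab_{\Lambda_t}(p)$ on a horoball of $\F_p$ --- a useful elaboration of the same argument, not a different route.
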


\begin{coro}
L'action du groupe $\Lambda_t$ sur l'ouvert proprement convexe $\mho_t$ est de covolume fini.
\end{coro}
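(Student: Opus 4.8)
Le plan est de ramener les deux corollaires à un seul énoncé : le domaine fondamental $D_t$ construit à la proposition \ref{dom_fond_proj} est de volume de Busemann fini, c'est-à-dire $\mu_{\mho_t}(D_t) < \infty$. Comme $\partial D_t$ est de mesure nulle, le volume total de la variété $\mho_t/_{\Lambda_t}$ est exactement $\mu_{\mho_t}(D_t)$, et les deux corollaires en découlent simultanément. L'ingrédient élémentaire est la monotonie de la mesure de Busemann vis-à-vis de l'inclusion : si $\O \subset \O'$ sont deux ouverts proprement convexes, alors en tout point $x \in \O$ les bords vus depuis $x$ sont plus proches dans $\O$, donc $\|v\|_x^{\O} \geqslant \|v\|_x^{\O'}$, d'où $B_x^{\O}(1) \subset B_x^{\O'}(1)$ et par suite $\mu_{\O} \geqslant \mu_{\O'}$ comme densités sur $\O$. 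Il est essentiel de remarquer que la proposition \ref{dom_fond_proj} a déjà réalisé le travail difficile : en chaque point $p \in \partial D_t \cap \partial \mho_t$ elle fournit deux \emph{vrais} ellipsoïdes $\F_p \subset \mho_t \subset \F'_p$ tangents en $p$, l'ellipsoïde intérieur $\F_p$ étant une horoboule de $\F'_p$.

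Je commencerais par découper $D_t$. D'après le point (2) de la proposition \ref{dom_fond_proj}, l'ensemble $\partial D_t \cap \partial \mho_t$ est fini ; notons-le $\{p_1, \dots, p_k\}$. Ce sont les pointes, images par le théorème \ref{theo_kapo} des sommets à l'infini du domaine de Dirichlet hyperbolique du lemme \ref{lem_vol}. Pour chaque $i$, je choisirais une horoboule $\mathcal{B}_i$ centrée en $p_i$ et incluse dans $\F_{p_i}$. Comme, près de chaque $p_i$, le domaine $D_t$ est contenu dans un cône de sommet $p_i$ au-dessus d'une base compacte (cf. la preuve du lemme \ref{lem_vol} et le lemme \ref{pos_elli}), toute suite de $D_t$ tendant vers $\partial \mho_t$ converge vers l'une des pointes et finit par entrer dans l'horoboule correspondante. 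Ainsi la partie « épaisse » $D_t \setminus \bigcup_i \mathcal{B}_i$ est d'adhérence compacte dans $\mho_t$ ; la mesure $\mu_{\mho_t}$ étant de Radon, cette partie est de volume fini.

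Il resterait à majorer le volume de chaque région de pointe $D_t \cap \mathcal{B}_i$. Comme $\F_{p_i} \subset \mho_t$ et $\mathcal{B}_i \subset \F_{p_i}$, la monotonie donne $\mu_{\mho_t} \leqslant \mu_{\F_{p_i}}$ sur $\mathcal{B}_i$, d'où $\mu_{\mho_t}(D_t \cap \mathcal{B}_i) \leqslant \mu_{\F_{p_i}}(D_t \cap \mathcal{B}_i)$. Or $\F_{p_i}$ est un ellipsoïde, donc isométrique à $\H^n$, et dans cette métrique $\mathcal{B}_i$ est une horoboule centrée au point à l'infini $p_i$, préservée par le groupe de pointe $\Lambda^t_{p_i} = \Stab_{\Lambda_t}(p_i) \simeq \Z^{n-1}$, lequel agit cocompactement sur les horosphères. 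Le quotient $\mathcal{B}_i/_{\Lambda^t_{p_i}}$ est donc une pointe hyperbolique de volume fini. Comme $D_t$ est un domaine fondamental pour $\Lambda_t$, deux points distincts de l'intérieur de $D_t \cap \mathcal{B}_i$ ne sont jamais $\Lambda^t_{p_i}$-équivalents : la projection $D_t \cap \mathcal{B}_i \rightarrow \mathcal{B}_i/_{\Lambda^t_{p_i}}$ est injective à un ensemble négligeable près, ce qui donne $\mu_{\F_{p_i}}(D_t \cap \mathcal{B}_i) \leqslant \mu_{\F_{p_i}}(\mathcal{B}_i/_{\Lambda^t_{p_i}}) < \infty$. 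En sommant sur les $k$ pointes, on obtient $\mu_{\mho_t}(D_t) < \infty$, ce qui conclut.

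Le point le plus délicat n'est pas dans cet argument de comparaison, qui est presque formel, mais bien en amont, dans l'existence des ellipsoïdes de protection $\F_p, \F'_p$ en chaque pointe — tout particulièrement aux pointes $p$ rencontrant les murs $\bigcup_{\g} \g H$, où l'image par $\dev_t$ d'une horoboule n'est qu'un « ellipsoïde par morceaux » et où il faut établir que $\Stab_{\Lambda_t}(p)$ préserve néanmoins une authentique forme quadratique de signature $(n,1)$, via les lemmes \ref{lem_eucli} et \ref{vp}. Ce travail étant déjà accompli à la proposition \ref{dom_fond_proj}, la seule subtilité restante consiste à s'assurer que l'on peut bien choisir $\mathcal{B}_i \subset \F_{p_i}$ et que $\overline{D_t} \cap \partial \mho_t$ se réduit aux pointes, ce qui transite par le lemme \ref{lem_vol} et le théorème \ref{theo_kapo}.
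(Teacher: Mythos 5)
Votre démonstration est correcte et suit essentiellement la même voie que celle de l'article : réduction aux voisinages des pointes (finitude de $\partial D_t$ via la proposition \ref{dom_fond_proj}), inclusion locale de $D_t$ dans l'ellipsoïde intérieur $\F_p$, comparaison des mesures de Busemann (proposition \ref{compa}), et finitude du volume d'une pointe hyperbolique. Vous explicitez seulement davantage le pas que l'article déclare \og bien connu \fg{} (l'injection de $D_t \cap \mathcal{B}_i$ dans le quotient de l'horoboule par $\Lambda^t_{p_i}$), ce qui est un complément utile mais ne change pas l'argument.
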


Pour montrer ce résultat nous aurons besoin d'un théorème de comparaison des volumes en géométrie de Hilbert. Ce résultat de géométrie de Hilbert est très classique, c'est une conséquence directe de la définition de la distance de Hilbert et de la mesure de Busemann. Pour plus de détails, on pourra consulter \cite{MR2270228}.

\begin{prop}\label{compa}
Soient $\O_1$ et $\O_2$ deux ouverts proprement convexes de $\PP^n$
tels que $\O_1 \subset \O_2$, alors, pour tout borélien $\mathcal{A}$ de $\O_1$, on a
$\mu_{\O_2}(\mathcal{A}) \leqslant \mu_{\O_1}(\mathcal{A})$.
\end{prop}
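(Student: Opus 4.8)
Le plan est d'exploiter la monotonie de la norme de Hilbert vis-à-vis de l'inclusion des convexes, puis de remonter à la mesure de Busemann via sa définition. Je commencerais par fixer une carte affine $A$ contenant $\overline{\O_2}$ (donc aussi $\overline{\O_1}$) ainsi qu'une norme euclidienne $\| \cdot \|$ sur $A$ ; comme on l'a rappelé, ni la structure finslérienne ni les mesures $\mu_{\O_1}, \mu_{\O_2}$ ne dépendent de ces choix, on peut donc calculer les deux membres de l'inégalité avec les mêmes données, et une même mesure de Lebesgue $\Vol$ sur $A$.

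La première étape est la monotonie ponctuelle de la norme finslérienne. Soient $x \in \O_1$ et $v \in A$. Notons $p_i^{+}$ (resp. $p_i^{-}$) le point d'intersection de la demi-droite issue de $x$ de direction $v$ (resp. $-v$) avec $\partial \O_i$. Comme $\O_1 \subset \O_2$, le long de chacune de ces demi-droites on rencontre le bord de $\O_1$ avant celui de $\O_2$, d'où $\|x - p_1^{\pm}\| \leqslant \|x - p_2^{\pm}\|$. En reportant dans la formule $\|v\|_x = \frac{1}{2}\big(\frac{1}{\|x - p^-\|} + \frac{1}{\|x - p^+\|}\big)\|v\|$, on obtient $\|v\|_x^{\O_2} \leqslant \|v\|_x^{\O_1}$ pour tout $v \in A$.

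La seconde étape en déduit l'inclusion des boules unité tangentes. L'inégalité précédente entraîne $B_x^{\O_1}(1) \subset B_x^{\O_2}(1)$ dans $T_x \O \simeq A$ (une norme plus petite a une boule unité plus grande : si $\|v\|_x^{\O_1} < 1$ alors $\|v\|_x^{\O_2} \leqslant \|v\|_x^{\O_1} < 1$), donc $\Vol(B_x^{\O_1}(1)) \leqslant \Vol(B_x^{\O_2}(1))$ par croissance de $\Vol$. En passant aux inverses et en intégrant sur $\mathcal{A} \subset \O_1$, la définition $\mu_{\O_i}(\mathcal{A}) = \int_{\mathcal{A}} \frac{d\Vol(x)}{\Vol(B_x(1))}$ fournit directement $\mu_{\O_2}(\mathcal{A}) \leqslant \mu_{\O_1}(\mathcal{A})$.

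Il n'y a pas réellement d'obstacle de fond : le seul point à soigner est l'inégalité $\|x - p_1^{\pm}\| \leqslant \|x - p_2^{\pm}\|$, qui résulte de la convexité et de l'inclusion $\O_1 \subset \O_2$ — un rayon issu d'un point intérieur quitte le plus petit convexe en premier ; tout le reste est formel. On prendra simplement garde au fait que $x$ doit rester dans $\O_1$, ce qui est assuré puisqu'on intègre sur un borélien $\mathcal{A} \subset \O_1$.
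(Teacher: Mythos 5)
Votre démonstration est correcte et correspond exactement à l'argument que le papier a en tête : celui-ci ne donne d'ailleurs aucune preuve détaillée, se contentant d'affirmer que la proposition est « une conséquence directe de la définition de la distance de Hilbert et de la mesure de Busemann » et de renvoyer à \cite{MR2270228}. Votre enchaînement — le rayon issu de $x$ quitte $\O_1$ avant $\O_2$, donc $\|v\|_x^{\O_2} \leqslant \|v\|_x^{\O_1}$, donc $B_x^{\O_1}(1) \subset B_x^{\O_2}(1)$, donc comparaison ponctuelle des densités puis intégration sur $\mathcal{A}$ — est précisément cette conséquence directe, rédigée proprement.
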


\begin{proof}[Démonstration des deux corollaires]
Comme $\partial D_t$ est fini (\ref{dom_fond_proj}), il est clair que $D_t$ est de volume fini si et seulement si pour tout $p \in \partial \mho_t \cap D_t$, il existe un voisinage $V_p$ de $p$ dans $\overline{\O}$ tel que $\mu_{\mho_t}(V_p \cap D_t) < \infty$. Mais, le lemme \ref{dom_fond_proj} montre que si $V_p$ est assez petit alors $V_p \cap D_t \subset \F_p$. Le convexe $\F_p$ est un ellipsoïde (donc c'est l'espace hyperbolique), par conséquent, il est bien connu que $\mu_{\F_p}(V_p \cap D_t) < \infty$. Enfin, la proposition \ref{compa} montre que $\mu_{\mho_t}(V_p \cap D_t) \leqslant \mu_{\F_p}(V_p \cap D_t)  < \infty$.
\end{proof}

\subsection{Le groupe $\Lambda_t$ n'est pas un groupe de Schottky}

\begin{rema}
Le groupe $\Lambda_t$ n'est pas un groupe de Schottky. La définition de groupe de Schottky ne fait pas l'unanimité rappelons donc deux définitions pour fixer notre propos.

\begin{defi}
Un élément $\g$ de $\ss$ est dit \emph{loxodromique} lorsque les valeurs propres de $\g^2$ sont simples et positives.
\end{defi}

\begin{defi}
Un sous-groupe $\G$ de $\ss$ est un \emph{groupe de Schottky} lorsque c'est un groupe libre discret dont tous les éléments sont loxodromiques.
\end{defi}

Le groupe $\Lambda_t$ n'est pas un groupe de Schottky car il contient des éléments unipotents (i.e 1 est l'unique valeur propre). Par exemple, les stabilisateurs des points $p \in \partial D_t$.
\end{rema}

\section{Gromov-hyperbolicité}\label{sec_gro}

\subsection{Point de concentration faible}

\begin{defi}
Soient $\O$ un ouvert proprement convexe de $\PP^n$ et $\G$ un sous-groupe discret et irréductible de $\ss$ qui préserve $\O$. Un point $x \in \partial \O$ est \emph{un point parabolique borné} si l'action du groupe $\Stab_{\G}(x)$ sur $L_{\G}- \{ x\}$ est cocompacte.
\end{defi}

\begin{defi}
Soient $\O$ un ouvert proprement convexe de $\PP^n$ et $\G$ un sous-groupe discret de $\ss$ qui préserve $\O$. On dit qu'un point $x \in \partial \O$ est un \emph{point limite conique} lorsqu'il existe une suite d'éléments $(\delta_n)_{n \in \N}$ de $\G$, un point $x_0 \in \O$, une demi-droite $[x_1,x[$, et un réel $C > 0$ tel que:
\begin{enumerate}
\item $\delta_n \cdot x_0 \underset{ n \to \infty}{\to} x$
\item $d_{\O}(\delta_n \cdot x_0, [x_1,x[) \leqslant C$
\end{enumerate}
\end{defi}

\begin{defi}
Soient $\O$ un ouvert proprement convexe de $\PP^n$ et $\G$ un sous-groupe discret de $\ss$ qui préserve $\O$. On dit qu'un point $x \in \partial \O$ est un \emph{point de concentration faible} lorsqu'il existe un voisinage connexe $\U$ de $x$ dans $\partial \O$ tel que pour tout voisinage $\V$ de $x$ dans $\partial \O$, il existe un élément $\g \in \G$ tel que:
\begin{enumerate}
\item $x \in \g(\U)$
\item $\g(\U) \subset \V$
\end{enumerate}
\end{defi}

%

\begin{theo}[Hong, Jeong, SaKong \cite{MR1430961}]\label{theo_core}
Soient $\E$ un ellipsoïde de $\PP^n$ et $\G$ un sous-groupe discret de $\ss$ qui préserve $\E$. Tout point limite conique est un point de concentration faible.
\end{theo}

%
%

Le théorème suivant a de nombreuses versions et de nos nombreux auteurs, on pourra trouver une démonstration dans l'article \cite{MR1218098} de Bowditch.

\begin{theo}\label{theo_point_volfini}
Soient $\E$ un ellipsoïde de $\PP^n$ et $\G$ un sous-groupe discret de $\ss$ qui préserve $\E$. 
L'action de $\G$ sur $\E$ est de covolume fini si et seulement si tout point de $\partial \E$ est un point parabolique borné ou un point limite conique.
\end{theo}

On obtient donc la proposition suivante:

\begin{prop}
Tout point de $\partial \mho_t$ est un point de concentration faible ou un point parabolique borné pour l'action de $\Lambda_t$.
\end{prop}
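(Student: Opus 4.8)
The plan is to transport the dichotomy established for ellipsoids (Théorème \ref{theo_point_volfini} combined with Théorème \ref{theo_core}) to the convex set $\mho_t$ via the local structure of $\partial\mho_t$ near each of its boundary points. The key observation is that the domain fundamental $D_t$ constructed in Proposition \ref{dom_fond_proj} reduces everything to finitely many boundary points $p\in\partial D_t\cap\partial\mho_t$: since $\Lambda_t$ acts on $\partial\mho_t$ and $\partial D_t$ is finite, every point of $\partial\mho_t$ is $\Lambda_t$-equivalent either to an interior point of some chamber's boundary or to one of these finitely many distinguished points. Because the property of being a point of concentration faible or a point parabolique borné is invariant under the group action (both definitions are phrased $\Gamma$-equivariantly), it suffices to verify the claim at each $p\in\partial D_t\cap\partial\mho_t$.

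**First I would** use the two ellipsoids $\F_p\subset\mho_t\subset\F'_p$ provided by Proposition \ref{dom_fond_proj}, tangent to $\partial\mho_t$ at the single common point $p$, with $\F_p$ a horoboule of $\F'_p$. These sandwich $\mho_t$ tightly near $p$, so the local geometry of $\partial\mho_t$ at $p$ is squeezed between two genuine hyperbolic models. I would then split into the two cases already separated in the volume-finiteness argument. If $p$ corresponds to a cusp, i.e.\ $p\in\bigcup_{\g}\g H$ or more precisely $\Stab_{\Lambda_t}(p)$ is the (virtually $\Z^{n-1}$) fundamental group of a cusp of $M_n$, then this stabilizer acts cocompactly on $L_{\Lambda_t}\setminus\{p\}=\partial\mho_t\setminus\{p\}$ by minimality (Proposition \ref{prop_min}), exactly as the parabolic group does on the ellipsoid; hence $p$ is a \emph{point parabolique borné}. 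The sandwiching by $\F_p$ and $\F'_p$ guarantees the cocompactness transfers, since the Hilbert metrics of $\mho_t$, $\F_p$, $\F'_p$ are comparable on the relevant region.

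**The other case** is when $p$ is not a cusp point, so $\Lambda_t$ contributes no parabolic fixing $p$ beyond the trivial one; here I expect $p$ to be a \emph{point limite conique} for the ellipsoid $\F'_p$ (this is where Théorème \ref{theo_point_volfini} applies, since the ellipsoid $\F'_p$ carries a finite-covolume action of an appropriate conjugate of a lattice, via the finite-volume conclusion of the preceding corollaries). By Théorème \ref{theo_core}, a conical limit point of a discrete subgroup preserving an ellipsoid is a point de concentration faible. The final step is to check that being a conical limit point for $\F'_p$, together with the inclusions $\F_p\subset\mho_t\subset\F'_p$, forces $p$ to be a point de concentration faible for $\mho_t$ as well: the neighborhood $\U$ and the shrinking condition $\g(\U)\subset\V$ are purely topological on $\partial\mho_t$, and the comparability of the boundaries near $p$ (all three are $\C^1$ and tangent at $p$) should let one pull back the contracting family of group elements.

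\textbf{The main obstacle} I anticipate is precisely this last transfer: the definitions of point de concentration faible and point limite conique are stated for a fixed convex $\O$, and I must argue that the properties are \emph{inherited} from the osculating ellipsoids $\F_p,\F'_p$ rather than proving them from scratch for $\mho_t$. For the conical limit point this requires a quasi-isometry comparison of the Hilbert metrics $d_{\mho_t}$ and $d_{\F'_p}$ along the relevant geodesic ray $[x_1,p[$, which is legitimate because the two convex sets agree to first order at $p$ and are nested; one must verify the bounded-distance condition $d_{\mho_t}(\delta_n\cdot x_0,[x_1,p[)\leqslant C$ survives the comparison. For the concentration condition the delicate point is that $\U$ must be a neighborhood \emph{in} $\partial\mho_t$, not in $\partial\F'_p$, so I would use the homeomorphism $\partial\mho_0\to\partial\mho_t$ from Théorème \ref{theo_kapo} together with the tangency data to match neighborhoods. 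Once these comparability lemmas are in place, the dichotomy at each of the finitely many $p$, followed by $\Lambda_t$-equivariance, yields the proposition for all of $\partial\mho_t$.
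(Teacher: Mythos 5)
Your reduction step is false, and it invalidates the whole architecture of the argument. The set $\partial D_t\cap\partial\mho_t$ is finite and consists precisely of the parabolic (cusp) points of the fundamental domain; its $\Lambda_t$-orbit is therefore countable. But $\partial\mho_t$ is an $(n-1)$-sphere, so the uncountably many remaining boundary points --- exactly those that should turn out to be points of concentration faible --- lie in the orbit of \emph{no} point of $\partial D_t$. A generic boundary point is in the closure of the union $\bigcup_{\g}\g D_t$ without being in the closure of any single translate, so \og il suffit de vérifier l'énoncé en chaque $p\in\partial D_t\cap\partial\mho_t$\fg\ simply does not hold; your own second case (the \og non-cusp \fg\ point $p$) is symptomatic of the confusion, since every point of $\partial D_t\cap\partial\mho_t$ furnished by Proposition \ref{dom_fond_proj} \emph{is} a cusp point with virtually $\Z^{n-1}$ stabilizer. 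A second concrete error: you invoke Théorème \ref{theo_point_volfini} for the osculating ellipsoid $\F'_p$, but $\Lambda_t$ does not preserve $\F'_p$ --- only $\Stab_{\Lambda_t}(p)$ does, and that parabolic subgroup acts on $\F'_p$ with \emph{infinite} covolume, so the theorem's hypothesis fails. Finally, the quasi-isometry comparison of $d_{\mho_t}$ and $d_{\F'_p}$ near the tangency point that you flag as the \og main obstacle \fg\ is indeed nontrivial (Hilbert metrics of nested convex bodies tangent at a point need not be comparable on neighborhoods), and no amount of local tangency data would repair the global reduction above.

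The paper's proof avoids all of this and is three lines long: the ellipsoid in the story is $\mho_0$ itself, on which $\Lambda_0$ already acts with finite covolume (it is the holonomy of the finite-volume hyperbolic structure $\M_0$). Théorème \ref{theo_point_volfini} applied to $(\mho_0,\Lambda_0)$ gives that every point of $\partial\mho_0$ is parabolique borné or limite conique, and Théorème \ref{theo_core} --- applicable because $\mho_0$ is a genuine ellipsoid --- upgrades the conical points to points de concentration faible. The key observation, which your proposal circles around without isolating, is that both properties in the conclusion (\emph{parabolique borné} and \emph{concentration faible}) are purely topological: they involve only the action of the group by homeomorphisms on the boundary, with no metric content. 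Since Théorème \ref{theo_kapo} provides an equivariant homeomorphism $\dev_t:\partial\mho_0\rightarrow\partial\mho_t$, the dichotomy transports verbatim to $(\mho_t,\Lambda_t)$. The whole point of converting \og limite conique \fg\ (a metric notion) into \og concentration faible \fg\ (a topological one) \emph{before} deforming is to make the transfer free; Proposition \ref{dom_fond_proj} and the osculating ellipsoids play no role here --- they are used later, for strict convexity and Gromov-hyperbolicity.
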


\begin{proof}
\par{
Le théorème \ref{theo_point_volfini} montre que tout point de $\mho_0$ est un point parabolique borné ou un point limite conique. Le théorème \ref{theo_core} montre tout point de $\mho_0$ est un point parabolique borné ou un point de concentration faible. Ces deux notions ne font intervenir que les propriétés topologiques de l'action de $\Lambda_0$ par homéomorphisme sur $\partial \mho_0$.
}
\\
\par{
L'application $\dev_t$ est un homéomorphisme équivariant entre les deux convexes $\mho_0$ et $\mho_t$ de $\PP^n$. Le théorème \ref{theo_kapo} montre que cet homéomorphisme se prolonge en un homéomorphisme équivariant: $\dev_t: \partial \mho_0 \rightarrow \partial \mho_t$. Par conséquent, tout point de $\mho_t$ est un point parabolique borné ou un point de concentration faible pour l'action de $\Lambda_t$.
}
\end{proof}

\subsection{Strict-convexité}

\begin{prop}\label{non_stric_conv}
Soit $\O$ un ouvert proprement convexe de $\PP^n$, supposons qu'il existe un sous-espace projectif $E$ de $\PP^n$ de dimension $1 \leqslant m \leqslant n-1$ tel que l'intersection $\partial \O \cap E$ soit d'intérieur dans $E$ non vide.
On note $S$ l'intérieur dans $E$ de $\partial \O \cap E$, c'est un ouvert proprement convexe de $E$.

Pour toute suite de points $x_n \in \O$ , pour tout point $x_{\infty} \in \partial \O$, et tout réel $R>0$, si la suite $x_n \underset{n \to \infty}{\to} x_{\infty}$ alors la suite $B^{\O}_{x_n}(R)$ converge vers la boule $B^{S}_{x_{\infty}}(R)$ pour la distance de Hausdorff induite par la distance canonique $d_{can}$ de $\PP^n$.
\end{prop}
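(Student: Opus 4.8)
The plan is to transfer everything to an affine chart and reduce the Hausdorff convergence to two Kuratowski half–inclusions for the balls. First I would fix an affine chart $A$ containing $\overline{\O}$ and a Euclidean norm $\|\cdot\|$, and record the elementary convex geometry behind the hypothesis. Since $S=\textrm{int}_E(\partial\O\cap E)$ is nonempty and open in $E$, the relative interior of the compact convex set $\overline{\O}\cap E$ is full dimensional in $E$; writing it as the disjoint union of $\O\cap E$ (open in $E$) and its intersection with $\partial\O$ (relatively closed), connectedness forces one piece to be empty, and the case $\mathrm{relint}\subset\O$ is excluded because it would make $\partial\O\cap E$ have empty interior in $E$. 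Hence $E\cap\O=\varnothing$, so $\overline{\O}\cap E=\overline{S}$ and $S$ is a genuine flat piece of $\partial\O$. I then place coordinates so that $E=\R^m\times\{0\}\subset\R^m\times\R^{n-m}$ and $x_\infty=0\in S$, where I understand $x_\infty\in S$ (so that $B^{S}_{x_\infty}(R)$ is defined).

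The heart of the matter is a \emph{collapse} statement, which I would prove first: if $y_n\in B^{\O}_{x_n}(R)$ and $y_n\to y_\infty$ in $\overline\O$, then $y_\infty\in\overline{S}$. The point $y_\infty$ cannot lie in $\O$, for $x_n\to x_\infty\in\partial\O$ and $(\O,d_{\O})$ is complete, hence proper, so $d_{\O}(x_n,z)\to\infty$ for any interior point $z$, contradicting $d_{\O}(x_n,y_n)\le R$. Thus $y_\infty\in\partial\O$. Writing $p_n,x_n,y_n,q_n$ for the ordered intersections of the line $(x_ny_n)$ with $\partial\O$, the inequality $\frac{1}{2}\ln[p_n:x_n:y_n:q_n]\le R$ passes to the limit and yields $p_\infty\neq x_\infty$ and $q_\infty\neq y_\infty$; therefore $x_\infty$ is interior to the segment $[p_\infty,q_\infty]\subset\overline{\O}$, which forces $[p_\infty,q_\infty]\subset\partial\O$ and in particular $[x_\infty,y_\infty]\subset\partial\O$. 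The remaining step — that this boundary segment issued from $x_\infty\in S$ must lie in $E$, giving $y_\infty\in E\cap\overline{\O}=\overline{S}$ — is the essential geometric input, and I expect it to be the real obstacle: it is exactly the assertion that $\overline{S}=\overline{\O}\cap E$ is a maximal flat of $\partial\O$ through $x_\infty$ (equivalently, that $\overline{S}$ is a face of $\overline{\O}$ with affine span $E$). I would isolate this as a lemma; note that it genuinely requires $S$ to exhaust the flat part of $\partial\O$ through $x_\infty$, since otherwise the balls would spread along a larger flat and the conclusion would fail.

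Granting the collapse, the two half-inclusions follow from continuity of the cross ratio. For $\limsup_n B^{\O}_{x_n}(R)\subset\overline{B^{S}_{x_\infty}(R)}$: with $y_n\to y_\infty\in\overline{S}$ as above, the lines $(x_ny_n)$ converge to $\ell=(x_\infty y_\infty)\subset E$, so $p_n,q_n\to p_\infty,q_\infty\in\partial\O\cap\ell$; since $\overline{\O}\cap E=\overline{S}$, these are precisely the endpoints of $\ell\cap\overline{S}$, whence $d_{S}(x_\infty,y_\infty)=\frac{1}{2}\ln[p_\infty:x_\infty:y_\infty:q_\infty]=\lim_n d_{\O}(x_n,y_n)\le R$ and $y_\infty\in\overline{B^{S}_{x_\infty}(R)}$. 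For the reverse inclusion $B^{S}_{x_\infty}(R)\subset\liminf_n B^{\O}_{x_n}(R)$: given $z\in S$ with $d_{S}(x_\infty,z)<R$, set $z_n=x_n+(z-x_\infty)$, a translation in a direction parallel to $E$. Using that $x_\infty$ and $z$ lie in the relative interior of the flat face, a routine openness-convexity argument shows $z_n\in\O$ for $n$ large and $z_n\to z$; the line $(x_nz_n)$ is parallel to $\ell=(x_\infty z)\subset E$, its boundary intersections converge to the endpoints of $\ell\cap\overline{S}$, and so $d_{\O}(x_n,z_n)\to d_{S}(x_\infty,z)<R$, giving $z\in\liminf_n B^{\O}_{x_n}(R)$.

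Finally I would assemble the conclusion inside the compact space $\overline{\O}$. The first inclusion gives $\limsup_n\overline{B^{\O}_{x_n}(R)}\subset\overline{B^{S}_{x_\infty}(R)}$, and the second, after taking closures, gives $\overline{B^{S}_{x_\infty}(R)}\subset\liminf_n\overline{B^{\O}_{x_n}(R)}$. Hence the closed balls converge to $\overline{B^{S}_{x_\infty}(R)}$ in the Kuratowski sense, which on a compact metric space is the same as convergence for the Hausdorff distance associated to $d_{can}$. Since the Hausdorff distance does not distinguish a set from its closure, this is exactly the asserted convergence $B^{\O}_{x_n}(R)\to B^{S}_{x_\infty}(R)$.
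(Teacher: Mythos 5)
Your overall architecture (two Kuratowski half-inclusions inside the compact set $\overline{\O}$, assembled into Hausdorff convergence) is sound, and your two structural observations are correct and in fact sharper than the paper's text: the statement only makes sense for $x_{\infty} \in S$, and it genuinely needs $E$ to be \emph{maximal} among subspaces meeting $\partial \O$ in a set with nonempty interior --- a hypothesis absent here but silently restored in the proposition \ref{point_conc}, the only place where the result is used. The lemma you leave open is then not the real obstacle: under maximality, the union of the open segments of $\overline{\O}$ containing $x_{\infty}$ is a convex subset of $\partial \O$ which contains $S$ and the segment $[x_{\infty},y_{\infty}]$, and if $y_{\infty} \notin E$ its projective span together with $E$ would contradict maximality; so your collapse statement and with it the $\limsup$ inclusion are fine. (One small repair: your connectedness dichotomy in the first paragraph is invalid as stated --- a connected set can be partitioned into an open piece and a closed piece; to get $E \cap \O = \varnothing$, join a hypothetical $u \in E \cap \O$ to a point $s \in S$ and note that points of $[u,s)$ near $s$ lie in $\O$ and in an $E$-neighborhood of $s$ contained in $\partial \O$.)

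The genuine gap is the $\liminf$ inclusion: the claim that $z_n = x_n + (z - x_{\infty}) \in \O$ for $n$ large is \emph{not} a routine openness-convexity fact, and it is false for tangentially approaching sequences. In $\R^3$ take $\overline{\O} = \mathrm{conv}(G_0 \cup G_1)$ with $G_0 = \{(0,t,|t|) : |t| \leqslant 1\}$ and $G_1 = \{(1,t,t^2) : |t| \leqslant 1\}$; then $\overline{S} = [0,1] \times \{(0,0)\}$ is a maximal flat face, and a computation (the slice over $x$ is the Minkowski combination $(1-x)\,\mathrm{conv}(G_0) \oplus x\,\mathrm{conv}(G_1)$, whose lower boundary near the origin is the infimal convolution $z = y^2/x$ valid for $|y| \leqslant x/2$) shows that with $x_{\infty} = (0.7,0,0)$, $x_n = (0.7,\, 1/n,\, 2/n^2) \in \O$ and $z = (0.2,0,0) \in S$, the translate $z_n = (0.2,\, 1/n,\, 2/n^2)$ violates $z \geqslant 5y^2$ and so lies \emph{outside} $\overline{\O}$ for all $n \geqslant 10$. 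The approximating points must tilt with $n$ rather than translate parallel to $E$: here $d_{\O}(x_n, z_n') \to d_S(x_{\infty}, z)$ is achieved along the chords through the points $(0,0,1/n^2)$ and $x_n$, whose endpoints converge to the two endpoints of the chord of $\overline{S}$, and a correct proof must construct such chords and control their cross-ratios. You should know that this is exactly the weak point of the paper's own proof as well: the paper slices with the affine subspace $E_n$ through $x_n$ parallel to the span of $S$ and asserts that every point of $B^{S}_{x_{\infty}}(R)$ is a limit of points of $E_n \cap B^{\O}_{x_n}(R)$, whereas in the example above $E_n \cap \overline{\O} \subset \{x \geqslant 1/2\}$ for every $n$, so that assertion fails for the same reason. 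In substance your route matches the paper's (collapse of transverse directions plus convergence along the face) and it inherits the paper's unproved step; as a standalone argument it is incomplete at precisely this point, although the proposition itself, with the maximality hypothesis added, is true.
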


\begin{proof}
Il faut montrer deux choses pour montrer cette proposition. Si $X$ est un ensemble, on notera $X^{\varepsilon} = \{ y \in \PP^n \,\mid\, d_{can}(y,X) \leqslant \varepsilon \}$. On doit montrer que pour tout $\varepsilon >0$, il existe $N >0$ tel que pour tout $n \geqslant N$, on a $ B^{S}_{x_{\infty}}(R) \subset B^{\O}_{x_n}(R)^{\varepsilon}$ et $B^{\O}_{x_n}(R) \subset B^{S}_{x_{\infty}}(R)^{\varepsilon}$. Avec des quantificateurs cela se traduit par:

$$\forall \varepsilon>0 ,\, \exists N >0 ,\, \forall n \geqslant N ,\, \forall z_{\infty} \in  B^{S}_{x_{\infty}}(R), \exists y_n \in B^{\O}_{x_n}(R) \textrm{ tel que } d_{can}(y_n,z_{\infty}) \leqslant \varepsilon$$

et

$$
\forall \varepsilon>0 ,\, \exists N >0 ,\, \forall n \geqslant N ,\, \forall y_n \in  B^{\O}_{x_n}(R), \exists z_{\infty} \in B^{S}_{x_{\infty}}(R) \textrm{ tel que } d_{can}(y_n,z_{\infty}) \leqslant \varepsilon
$$

Il s'agit donc de montrer que l'on peut trouver une "tranche" de $B^{\O}_{x_n}(R)$ qui converge "uniformément" vers $B^{S}_{x_{\infty}}(R)$ et que $B^{\O}_{x_n}(R)$ est proche de $B^{S}_{x_{\infty}}(R)$ si $n$ est assez grand.


Pour le montrer, le premier point, on commence par choisir une carte affine $A$ qui contient $\O$.
Considérons, $E_n$ le sous-espace affine parallèle à l'espace affine engendré par $S$ passant par $x_n$ dans la carte $A$. Tout point de l'intersection $C_n = E_n \cap B^{\O}_{x_n}(R)$ converge vers un point de $B^{S}_{x_{\infty}}(R)$, et inversement pour tout point $z_{\infty}$ de $B^{S}_{x_{\infty}}(R)$, il existe une suite de points de $C_n$ qui converge vers $z_{\infty}$.

Le sous-espace affine $E_n$ converge $E$ pour la distance de Haussdorff dans $A$. Par suite, pour tout $\varepsilon >0$, il existe un $N >0$, tel que pour tout $n \geqslant N$, on a $ B^{S}_{x_{\infty}}(R) \subset B^{\O}_{x_n}(R)^{\varepsilon}$. 

Pour le montrer, le second point, il faut remarquer que si $D_n$ est la droite passant par $x_n$ parallèle à une direction fixée  qui n'est pas incluse dans $E$ alors le diamètre pour $d_{can}$ de $D_n \cap B^{\O}_{x_n}(R)$ tend vers $0$ lorsque $n$ tend vers l'infini. Par suite, pour tout $\varepsilon >0$, il existe un $N >0$, tel que pour tout $n \geqslant N$, on ait $  B^{\O}_{x_n}(R) \subset B^{S}_{x_{\infty}}(R)^{\varepsilon}$. 
\end{proof}

\begin{prop}\label{point_conc}
Soient $\O$ un ouvert proprement convexe de $\PP^n$ et $\G$ un sous-groupe discret et irréductible de $\ss$ qui préserve $\O$.  Supposons qu'il existe un sous-espace projectif maximal $E$ de $\PP^n$ de dimension $1 \leqslant m \leqslant n-2$ tel que l'intersection $\partial \O \cap E$ soit d'intérieur dans $E$ non vide. Soit $x$ un point dans l'intérieur relatif de $\partial \O \cap E$, le point $x$ ne peut-être un point de concentation faible.
\end{prop}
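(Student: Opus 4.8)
The plan is to argue by contradiction, with Proposition~\ref{non_stric_conv} as the engine. Write $S=\mathrm{relint}(\partial\O\cap E)$, an $m$-dimensional properly convex open subset of $E$ with $x\in S$. Assume $x$ is a weak concentration point and let $\U$ be the associated connected neighbourhood of $x$ in $\partial\O$. Applying the definition to the shrinking neighbourhoods $\V_k=\{y\in\partial\O\mid d_{can}(y,x)\le 1/k\}$ produces elements $\g_k\in\G$ with $x\in\g_k(\U)$ and $\g_k(\U)\subset\V_k$. Since $x\in\U$ this already forces $\g_k(x)\to x$, and in fact $\g_k(y)\to x$ for \emph{every} $y\in\U$.

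The first real step is to read off the asymptotic shape of the $\g_k$. Viewing them in $\PP(\mathrm{End}(\R^{n+1}))$, a subsequence converges to a nonzero endomorphism $\bar g$. As $\U$ is an $(n-1)$-dimensional piece of $\partial\O$ and $\O$ is properly convex, the cone it spans is all of $\R^{n+1}$; since $\bar g$ carries the non-degenerate part of this cone into the single line $x$, the map $\bar g$ has rank one, with image the line $x$ and kernel a hyperplane $K$. Hence $\g_k(z)\to x$ for every $z\notin K$, uniformly on compact subsets of $\PP^n\setminus K$.

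Granting momentarily that $K\cap\O=\varnothing$, the contradiction is clean. Fix an interior point $z_*\in\O$ and a radius $R>0$, and put $B=\overline{B^{\O}_{z_*}(R)}$, a compact subset of $\O$. Because $B$ is compact and disjoint from $K$, the convergence above is uniform on $B$, so $\g_k(B)\to\{x\}$ for $d_{can}$ and in particular $\mathrm{diam}_{d_{can}}(\g_k(B))\to 0$. But $\g_k$ is a $d_\O$-isometry, whence $\g_k(B)=\overline{B^{\O}_{\g_k(z_*)}(R)}$; and since $z_*\notin K$ we have $\g_k(z_*)\to x\in S$, so Proposition~\ref{non_stric_conv} gives $\g_k(B)\to\overline{B^{S}_{x}(R)}$. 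The limit is a Hilbert ball of the $m$-dimensional convex set $S$, hence (using $m\ge 1$) has \emph{positive} $d_{can}$-diameter. This contradicts $\mathrm{diam}_{d_{can}}(\g_k(B))\to 0$, so $x$ cannot be a weak concentration point.

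The main obstacle is thus the claim $K\cap\O=\varnothing$, that the repelling hyperplane of the limit $\bar g$ misses $\O$; geometrically this is the statement that the faces $\g_k(S)$ through $\g_k(x)\to x$ do not pinch at $x$. This is precisely where the hypotheses $m\le n-2$, the maximality of $E$, and the irreducibility of $\G$ must be used: when $E$ has codimension at least two the supporting hyperplanes of $\O$ along $S$ form a positive-dimensional family, and I would control $K$ by also passing to the limit $\bar h=x^{-}\otimes\phi^{+}$ of $\g_k^{-1}$, identifying its repelling point $x^{-}$ and its hyperplane with the dual data of $\bar g$. Using that each $\g_k$ preserves $\overline{\O}$, together with Lemma~\ref{vp} and Benoist's description of the limit set (Theorem~\ref{def_enslim}, Proposition~\ref{prop_min}), one expects $K$ to be forced to be the supporting hyperplane of $\O$ at $x^{-}$, so that $K\cap\O=\varnothing$. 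The delicate point — and the reason the statement is restricted to $m\le n-2$ — is to rule out that $x^{-}$ or $K$ degenerates into the flat $E$; for a codimension-one face ($m=n-1$) the repelling hyperplane can genuinely cut $\O$ and the conclusion fails, consistently with the non strictly convex irreducible examples of Benoist that carry codimension-one simplex faces.
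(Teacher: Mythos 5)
Your overall strategy --- contradiction via Proposition~\ref{non_stric_conv}, by producing a Hilbert ball whose images under the $\g_k$ shrink to a point while their limit must be a ball of positive diameter in $S$ --- is exactly the paper's, but your execution has a genuine gap: the claim $K\cap\O=\varnothing$ for the kernel hyperplane of the limit endomorphism $\bar g$ is never proved. You explicitly defer it (``granting momentarily'', ``one expects $K$ to be forced to be the supporting hyperplane''), and the proposed repair via $\bar h=\lim\g_k^{-1}$, Lemma~\ref{vp} and Benoist's limit set is a plan, not an argument. There is also a misattribution of where the hypotheses enter: your rank-one claim already requires that $\U$ spans $\R^{n+1}$, and this is \emph{not} a consequence of $\U$ being an $(n-1)$-dimensional piece of the boundary of a properly convex set (for a triangle in $\PP^2$, an open arc of the boundary lies in a projective line). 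It is precisely the maximality of $E$ together with $m\leqslant n-2$ that forbids $\U$ from lying in a hyperplane $H$: otherwise $\partial\O\cap H$ would have nonempty interior in $H$, contradicting maximality. So the codimension hypothesis is consumed at the spanning step, not at the kernel step, and the case $m=n-1$ fails already there (the neighbourhood $\U$ can be flat), independently of any pinching of repelling hyperplanes.

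The gap is in fact unnecessary, and the paper's proof shows how to bypass the whole limit-endomorphism analysis: let $K'$ be the convex hull of $\U$ in $\overline{\O}$. Since the $\g_n$ are projective, $\g_n(K')$ is the convex hull of $\g_n(\U)$, and $\g_n(\U)\to\{x\}$ forces $\g_n(K')\to\{x\}$ with no transversality condition on any kernel. As $\U$ does not lie in a hyperplane (previous paragraph), $K'$ has nonempty interior, hence contains a Hilbert ball $B=B^{\O}_z(R)$; then $\g_n(B)=B^{\O}_{\g_n(z)}(R)\to\{x\}$, while Proposition~\ref{non_stric_conv} (applied with $x_\infty=x\in S$ and $m\geqslant 1$) says these balls converge to $B^{S}_{x}(R)$, which has positive diameter --- contradiction. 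If you insist on your route, note that $K\cap\O\neq\varnothing$ is actually harmless: for $z\in\O$ pick a line through $z$ whose endpoints $a,b\in\partial\O$ avoid $K$; then $\g_k(a),\g_k(b)\to x$, and the segments $\g_k([a,b])\subset\overline{\O}$ must collapse to $\{x\}$ by proper convexity, so $\g_k(z)\to x$ anyway --- but this is just the convex-hull trick in disguise, and once you see it the detour through $\bar g$ buys nothing.
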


\begin{proof}
Supposons que $x$ soit un point de concentration faible. Il existe alors un voisinage $\U$ de $x$ et une suite de d'éléments $(\g_n)_{n \in \N}$ de $\G$ tel que $\g_n(\U) \underset{ n \to \infty}{\to} \{ p \}$. On note $K$, l'enveloppe convexe  de $\U$ dans $\overline{\O}$, les $\g_n$ sont des applications projectives par conséquent, on a $\g_n(K) \underset{ n \to \infty}{\to} \{ p \}$. Comme la dimension de $\partial \O \cap E$ est strictement inférieure à $n-1$ et que $E$ est maximal, le compact $K$ est d'intérieur non vide, il existe donc une boule $B$ de $(\O,d_{\O})$ incluse dans $K$. Toute boule $B$ de $(\O,d_{\O})$ incluse dans $K$ vérifie que $\g_n(B) \underset{ n \to \infty}{\to} \{ p \}$. Mais la proposition \ref{non_stric_conv} montre que la suite $(\g_n(B))_{n \in \N}$ ne peut converger vers le point $p$. Le point $p$ n'est donc pas un point de concentration faible.
\end{proof}

\begin{coro}
L'ouvert proprement convexe $\mho_t$ est strictement convexe.
\end{coro}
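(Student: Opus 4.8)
Le plan est de raisonner par l'absurde : je supposerais que $\mho_t$ n'est pas strictement convexe et je produirais un point de $\partial\mho_t$ qui n'est ni un point de concentration faible ni un point parabolique borné, ce qui contredirait la proposition établie ci-dessus affirmant que tout point de $\partial\mho_t$ est de l'un de ces deux types. Concrètement, si $\mho_t$ n'est pas strictement convexe, alors $\partial\mho_t$ contient un segment non trivial, donc il existe un sous-espace projectif $E$ tel que $\partial\mho_t\cap E$ soit d'intérieur non vide dans $E$. Je choisirais un tel $E$ de dimension $m$ \emph{maximale}, avec $1\leqslant m\leqslant n-1$, je poserais $S$ l'intérieur de $\partial\mho_t\cap E$ dans $E$ (un ouvert proprement convexe de $E$) et je prendrais un point $x\in S$, de sorte que $x$ est intérieur à un segment de $\partial\mho_t$.

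Je montrerais d'abord que $x$ n'est pas un point parabolique borné. En un tel point $p$, la proposition \ref{dom_fond_proj} fournit deux ellipsoïdes $\F_p\subset\mho_t\subset\F'_p$ tangents à $\partial\mho_t$ en $p$ et vérifiant $\partial\F'_p\cap\partial\mho_t=\{p\}$. Comme l'ellisphère $\partial\F'_p$ est strictement convexe, le point $p$ est nécessairement extrémal : un segment non trivial de $\partial\mho_t$ dont l'intérieur contiendrait $p$ serait inclus dans $\overline{\F'_p}$, et l'hyperplan d'appui de $\F'_p$ en $p$ contiendrait alors ses deux extrémités, forçant le segment à être inclus dans $\partial\F'_p$, ce qui contredirait la stricte convexité de l'ellisphère. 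Le point $x$ étant intérieur à une face de dimension $\geqslant 1$, il n'est pas extrémal, donc pas parabolique borné ; la proposition précédente montre alors que $x$ est un point de concentration faible. Si $m\leqslant n-2$, ceci contredit immédiatement la proposition \ref{point_conc}, et la démonstration est terminée.

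La difficulté principale sera le cas $m=n-1$, c'est-à-dire une face de codimension $1$, qui échappe à la proposition \ref{point_conc} (l'enveloppe convexe d'un voisinage de $x$ reste dans l'hyperplan $E$, de sorte qu'elle ne contient aucune boule de $(\mho_t,d_{\mho_t})$ et l'argument de \ref{point_conc} ne s'applique plus). Je le traiterais directement en me ramenant à la géométrie de Hilbert de la face. Comme $x$ est intérieur à la face $S$ portée par l'hyperplan $E$, un voisinage connexe $\U$ de $x$ dans $\partial\mho_t$ est inclus dans $E$. La propriété de concentration faible fournit des éléments $\g_n\in\Lambda_t$ avec $x\in\g_n(\U)$ et $\g_n(\U)\to\{x\}$ pour $d_{can}$. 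Puisque $\g_n$ est un homéomorphisme de $\partial\mho_t$, l'ensemble $\g_n(\U)$ est un voisinage ouvert de $x$ dans $\partial\mho_t$ ; il est donc inclus dans $E$ au voisinage de $x$, ce qui force $\g_n(E)=E$. Ainsi chaque $\g_n$ préserve $S$ et y agit par isométrie pour la distance de Hilbert $d_S$.

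Je conclurais alors par un argument de diamètre : l'ensemble $\g_n(\U\cap S)$ est un voisinage de $x$ dans $S$ qui tend vers $\{x\}$ pour $d_{can}$, donc dont le $d_S$-diamètre tend vers $0$ — car au voisinage du point intérieur $x\in S$ la métrique de Hilbert de $S$ est comparable à la métrique euclidienne — alors que, $\g_n$ étant une $d_S$-isométrie, ce $d_S$-diamètre reste constant, égal au $d_S$-diamètre de $\U\cap S$, qui est strictement positif ; d'où une contradiction. Il n'existe donc pas non plus de face de codimension $1$, et $\mho_t$ est strictement convexe. Le point délicat de toute cette démonstration est précisément ce traitement du cas $m=n-1$, la proposition \ref{point_conc} ne couvrant que les dimensions $m\leqslant n-2$ ; la réduction à la géométrie de Hilbert de la face $S$ et à l'action isométrique du stabilisateur de $E$ permet de le refermer.
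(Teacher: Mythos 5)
Votre démonstration est correcte, et elle suit le texte pour l'essentiel (choix d'un sous-espace maximal $E$, dichotomie point de concentration faible / point parabolique borné, exclusion du cas parabolique par les ellipsoïdes de la proposition \ref{dom_fond_proj}, contradiction avec la proposition \ref{point_conc} lorsque $1 \leqslant m \leqslant n-2$ — vous inversez seulement l'ordre des deux exclusions, ce qui est logiquement équivalent). En revanche, votre traitement du cas $m=n-1$ diffère réellement de celui du papier. Le papier l'expédie par un argument de densité : les points intérieurs de $R$ forment alors un ouvert de $\partial\mho_t$ sans point parabolique borné, donc les points paraboliques bornés ne sont pas denses, ce qui contredit la minimalité de l'action (proposition \ref{prop_min}) — argument qui utilise implicitement l'existence d'au moins un point parabolique borné et le fait que la minimalité rend son orbite dense. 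Vous, au contraire, exploitez directement la concentration faible en $x$ : les éléments contractants $\g_n$ envoient un ouvert de l'hyperplan $E$ dans $E$, donc préservent $E$ (rigidité projective, correctement justifiée), donc agissent par isométries de Hilbert sur la face $S$, et la contraction de $\g_n(\U\cap S)$ contredit la conservation du $d_S$-diamètre. Votre variante est un peu plus longue mais autonome : elle n'invoque ni la minimalité ni la densité des points paraboliques, et elle étend de fait la proposition \ref{point_conc} au cas $m=n-1$ (un point intérieur à une face de codimension $1$ ne peut être de concentration faible, pour tout groupe discret préservant $\O$), ce qui est un énoncé intéressant en soi ; l'argument du papier, lui, est plus court une fois la minimalité acquise. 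Seule imprécision cosmétique chez vous : vous écrivez que le voisinage $\U$ de la définition est inclus dans $E$ et que $\g_n(\U)$ est un voisinage \emph{ouvert} de $x$, ce qui n'est pas garanti tel quel (le $\U$ est imposé par la définition et n'est pas nécessairement ouvert ni contenu dans $E$) ; mais votre argument n'en a pas besoin, puisqu'il suffit que $\g_n(\mathrm{int}\,\U\cap S)\subset\V\subset S$ pour $\V$ assez petit, ce que votre rédaction fournit ensuite correctement.
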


\begin{proof}
Tout segment du bord de $\mho_t$ est inclus dans un sous-espace projectif maximal $E$.  L'intersection $R = \partial \mho_t \cap E$ est alors d'intérieur dans $E$ non vide. On note $m$ la dimension de $E$.

On commence par traiter le cas où $m$ vérifie $1 \leqslant m \leqslant n-2$.

Le théorème \ref{theo_point_volfini} montre que tout point de $\mho_t$ est un point parabolique borné ou un point de concentration faible. La proposition \ref{point_conc} montre que tout point $p$ dans l'intérieur relatif de $R$ est un point parabolique borné. Mais, la proposition \ref{dom_fond_proj} montre que si $p$ est un point parabolique borné pour l'action de $\Lambda_t$ sur $\mho_t$ alors il existe un ellipsoïde $\E'_p$ tel que:
\begin{itemize}
\item $\Omega \subset \E'_{p}$.
\item $\partial \E'_{p} \cap \partial \Omega = \{ p \}$.
\end{itemize}
Un tel point $p$ ne peut être dans l'intérieur de $R$.

Enfin, si $m = n-1$ alors les points dans l'intérieur de $R$ ne peuvent être parabolique borné pour la même raison que précédemment. Par conséquent, les points paraboliques bornés ne peuvent être dense dans $\partial \mho_t$. Ce qui est absurde puisque l'action de $\Lambda_t$ sur $\mho_t$ est minimale (proposition \ref{prop_min}).

Par conséquent, l'ouvert proprement convexe $\mho_t$ est strictement convexe.
\end{proof}

\subsection{Gromov-hyperbolicité}

%

\begin{lemm}\label{dem_gro_hyp}
Soit $\Gamma$ un sous-groupe discret de $\ss$ qui pr\'eserve un ouvert proprement convexe $\Omega$ de $\PP^n$.
Soit $D$ un domaine fondamental connexe et localement fini pour l'action de $\Gamma$ sur $\Omega$.
On suppose que:
\begin{enumerate}
\item $D$ est une réunion finie de convexe.
\item $\partial D$ est un ensemble fini.
\item $\forall p \in \partial D \cap \partial \Omega$, il existe deux ellipso\"{i}des ouverts $\E_{p}$ et $\E'_{p}$ tel que:
\begin{itemize}
\item $\E_{p} \subset \Omega \subset \E'_{p}$.
\item $\partial \E_{p} \cap \partial \Omega = \partial \E'_{p} \cap \partial \Omega = \{ p \}$.
\end{itemize}
\item Le convexe $\O$ est strictement convexe.
\end{enumerate}
Alors, l'espace m\'etrique $(\Omega,d_{\Omega})$ est Gromov-Hyperbolique.
\end{lemm}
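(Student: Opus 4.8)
The plan is to verify the Rips thin-triangles condition uniformly. Since $(\O,d_{\O})$ is complete and its topology is that of $\PP^n$ (the Facts of \S\ref{base}), it is a proper length space, hence a proper geodesic space; moreover strict convexity (hypothesis 4) forces the $d_{\O}$-geodesics to be exactly the projective segments, uniquely determined by their endpoints. It therefore suffices to produce a single constant $\delta$ such that every geodesic triangle is $\delta$-thin. The two tools I would use are the \emph{monotonicity of the Hilbert metric} --- if $\O_1\subset\O_2$ then $d_{\O_2}\leqslant d_{\O_1}$ on $\O_1$, read straight off the cross-ratio formula (the distance analogue of Proposition \ref{compa}) --- and the fact that every ellipsoid, being isometric to $\H^n$, is $\delta_0$-hyperbolic for a constant $\delta_0=\delta_0(n)$ depending only on the dimension.

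First I would cut the fundamental domain into a thick part and finitely many cusps. Hypotheses (1)--(2) give $\partial D\cap\partial\O=\{p_1,\dots,p_k\}$, and deleting from $D$ a small horoball neighbourhood of each $p_i$ inside $\E_{p_i}$ leaves a compact core $K\subset\O$ whose closure misses $\partial\O$. Thus $\O$ is tiled $\G$-equivariantly by the translates of $K$ (on which $\G$ acts cocompactly) together with the $\G$-orbits of the finitely many horoball cusps $\E_{p_i}$, each preserved by the virtually abelian group $\Stab_{\G}(p_i)$.

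Next I would argue by a compactness--contradiction. Suppose no uniform $\delta$ works: pick triangles $T_m$ and points $w_m$ on one side with $d_{\O}(w_m,\text{other two sides})\geqslant m$, and translate by $\G$ so that $w_m$ lands in $D$. If $w_m$ lands in the core $K$ infinitely often, the segments making up $T_m$ subconverge (their endpoints live in the compact set $\overline{\O}$), and strict convexity forces the limiting triangle to be degenerate near the interior point $\lim w_m$, contradicting $d_{\O}(w_m,\cdot)\geqslant m$; this is exactly Benoist's cocompact mechanism (strict convexity $\Rightarrow$ Gromov-hyperbolic for divisible convex sets) applied to the repeating core. The remaining case is that $w_m$ lands in a cusp $\E_{p_i}$, and for this I would use the sandwich $\E_{p_i}\subset\O\subset\E'_{p_i}$ provided by hypothesis (3): since both bounding sets are isometric to $\H^n$ and are tangent at $p_i$ with $\E_{p_i}$ a horoball of $\E'_{p_i}$, the cusp of $\O$ at $p_i$ is \emph{asymptotically hyperbolic}, so that a $d_{\O}$-geodesic making a deep excursion into the cusp is forced to shadow the corresponding $\H^n$-geodesic of $\E'_{p_i}$ and to run up and back along essentially the same route near the horosphere $\partial\E_{p_i}$ --- the standard picture that makes deep cusp excursions thin in $\H^n$ --- again contradicting $d_{\O}(w_m,\cdot)\geqslant m$.

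The hard part is precisely this cusp comparison. The two sides of the sandwich are \emph{not} quasi-isometric to one another (a boundary point of $\E_{p_i}$ other than $p_i$ is interior to $\E'_{p_i}$, hence at finite $d_{\E'_{p_i}}$-distance but infinite $d_{\E_{p_i}}$-distance), so one cannot simply squeeze hyperbolicity through the inequalities; instead one must exploit the horoball structure together with the action of the parabolic group $\Stab_{\G}(p_i)$ to show directly that $\O$-geodesics entering the cusp behave asymptotically like hyperbolic ones. Once the thick-part and cusp estimates are both in hand, the compactness--contradiction closes and yields a single hyperbolicity constant $\delta$, proving that $(\O,d_{\O})$ is Gromov-hyperbolic.
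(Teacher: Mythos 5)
Your thick-part case is essentially the paper's first case ($u_\infty$ interior: the vertices subconverge to boundary points $x_\infty, y_\infty, z_\infty$, strict convexity forces $x_\infty=z_\infty=y_\infty$ while $u_\infty\in\O$ on the limiting side forces $x_\infty\neq y_\infty$), so that half is fine. The genuine gap is the cusp case, and you say so yourself: after correctly observing that the sandwich $\E_{p}\subset\O\subset\E'_{p}$ is only distance-monotone and \emph{not} a quasi-isometry, so that hyperbolicity cannot be squeezed through the inclusions, you write that ``one must exploit the horoball structure together with the parabolic group to show directly that $\O$-geodesics entering the cusp behave asymptotically like hyperbolic ones.'' That is a statement of what remains to be proved, not a proof. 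No shadowing lemma for Hilbert geodesics of $\O$ deep in the cusp is formulated, and it is far from clear how to extract one from hypotheses (1)--(4) alone: the Hilbert metric of $\O$ in the cusp is pinched between two hyperbolic metrics that are mutually non-quasi-isometric, which is exactly the obstruction you identified and then stepped around.

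The paper's mechanism is different and avoids any analysis of cusp geodesics: it is a renormalization. When $u_n\to p\in\partial D\cap\partial\O$, one chooses a hyperbolic element $g\in\Aut(\E'_{p})$ with repelling fixed point $p$ and attracting point $p^+$, the segment $[p,p^+]$ meeting the interior of $D$. Because $\E_{p}$ is a horoball of $\E'_{p}$ centred at $p$ --- a property you correctly imported from point 5 of Proposition \ref{dom_fond_proj}, and which the proof uses even though the lemma's stated hypotheses only mention tangency --- one gets $g^k\E_{p}\to\E'_{p}$, hence $g^k\O\to\E'_{p}$ by the sandwich. Hypotheses (1) and (2) make $t_{max}=\sup\,d_{\E'_p}(x,g\cdot x)$ over $\bigcup_k g^k(D)$ finite, which permits a diagonal (``staircase'') extraction using two parallel hyperplanes at distance $10\,t_{max}$: one obtains indices $(k_i,n_i)$ with $g^{k_i}\cdot u_{n_i}$ trapped in a fixed slab, hence converging to a point $u'_\infty$ \emph{interior} to $\E'_{p}$, while $g^{k_i}\cdot\O\to\E'_{p}$. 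Since $g^{k_i}\cdot T_{n_i}$ is isometric to $T_{n_i}$, the degenerate triangles pass to the limit inside the genuine hyperbolic space $\E'_{p}\cong\H^n$, whose Gromov-hyperbolicity forces $x'_\infty=y'_\infty=u'_\infty\in\partial\E'_{p}$, contradicting interiority of $u'_\infty$. In short: instead of following geodesics into the cusp, one blows the cusp up onto the osculating ellipsoid and lets the $\delta$-hyperbolicity of $\H^n$ do the work. This renormalization step is precisely the idea missing from your proposal, and without it the cusp case does not close.
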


\begin{proof}
Supposons que l'espace $(\Omega,d_{\Omega})$ n'est pas Gromov-Hyperbolique. Il existe alors une suite $(T_n)_{n \in \N}$ de triangles dont la taille tend vers l'infini lorsque $n \to \infty$.

Il existe donc 4 suites de points de $\O$: $(x_n)_{n \in \N}$, $(y_n)_{n \in \N}$, $(z_n)_{n \in \N}$ et $(u_n)_{n \in \N}$ tels que:
\begin{enumerate}
\item Le triangle $T_n$ a pour sommet les points $x_n,y_n,z_n$.
\item Le point $u_n$ appartient au segment $[x_n,y_n]$.
\item On a $d_{\O}(u_n,[x_n,z_n] \cup [z_n,y_n])\underset{ n \to \infty}{\to} \infty$.
\end{enumerate}

Quitte à appliquer un élément de $\G$, on peut supposer que $u_n \in D$. Quitte à extraire de nouveau, on peut supposer que la suite $(u_n)_{n \in \N}$ converge vers $u_{\infty} \in \overline{D} \subset \overline{\O} \subset \PP^n$. Nous allons distinguer les deux cas suivants.

\begin{enumerate}
\item Le point $u_{\infty} \in \O$.
\item Le point $u_{\infty} \in \partial D \cap \partial \O$.
\end{enumerate}

Commençons par supposer que le point $u_{\infty} \in \O$. Quitte à extraire, on peut supposer que les suites $(x_n)_{n \in \N}$, $(y_n)_{n \in \N}$, $(z_n)_{n \in \N}$ convergent dans $\overline{\O}$ vers les points $x_{\infty},\, y_{\infty} ,\, z_{\infty}$.

Les quantités $d_{\O}(u_n,x_n)$, $d_{\O}(u_n,y_n)$ et $d_{\O}(u_n,z_n)$ tendent vers l'infini lorsque $n \to \infty$. Par conséquent les points $x_{\infty},\, y_{\infty} ,\, z_{\infty}$ sont sur le bord de $\O$.

Il est clair que $x_{\infty}  \neq y_{\infty}$ puisque $u_{\infty} \in \O$. Si $x_{\infty} \neq z_{\infty}$ alors la strict-convexité de $\O$ entraine que $d_{\O}(u_{\infty},[x_{\infty},z_{\infty}] \cup [z_{\infty},y_{\infty}]) < \infty$, ce qui est absurde. Donc $x_{\infty} = z_{\infty}$. Pour la même raison $y_{\infty} = z_{\infty}$. Mais c'est absurde puisque  $x_{\infty}  \neq y_{\infty}$.

Supposons à présent que le point $u_{\infty} \in \partial D \cap \partial \O$. La figure \ref{dem_gro} peut aider à suivre cette partie de la démonstration. Il existe alors deux ellipsoïdes $\E_{p}$ et $\E'_{p}$ tel que:
\begin{enumerate}
\item $\E_{p} \subset \Omega \subset \E'_{p}$.
\item $\partial \E_{p} \cap \partial \Omega = \partial \E'_{p} \cap \partial \Omega = \{ p\}$.
\item $\E'_{p}$ est une horoboule de centre $p$ du convexe $\E_{p}$.
\end{enumerate}

On se donne $p^+$ un point sur le bord de l'ellipsoïde $\E'_{p}$ qui n'est pas $p$ et tel que le segment $[p,p^+]$ rencontre $D$ dans son intérieur. On se donne un élément hyperbolique $g \in \Aut(\E'_{p})$ dont le point attractif est $p^+$ et le point répulsif est $p$. La suite de convexe $g^k \cdot\E_{p}$ tend vers $\E'_{p}$ car $\E_{p}$ est l'horoboule de centre $p$ de l'espace hyperbolique $\E_{p}$. 

Nous allons utiliser $g$ pour obliger la "bisuite" $(g^k \cdot u_n)_{(k,n) \in \N^2}$ à sous-converger dans $\E'_p$. La suite $(u_n)_{n \in \N}$ converge vers $p$ et l'élément $g$ est hyperbolique de point attractif $p^+$. Le supremum $t_{max}$ des quantités $d_{\E'_p}(x, g \cdot x)$ sur la réunion des $g^k(D)$ est fini car le segment $[p,p^+]$ rencontre $D$ dans son intérieur, $D$ est une réunion finie de convexe et $\partial D$ est un ensemble fini.

\begin{figure}[!h]
\begin{center}
\includegraphics[trim=-4cm 14cm 0cm 0cm, clip=true, width=14cm]{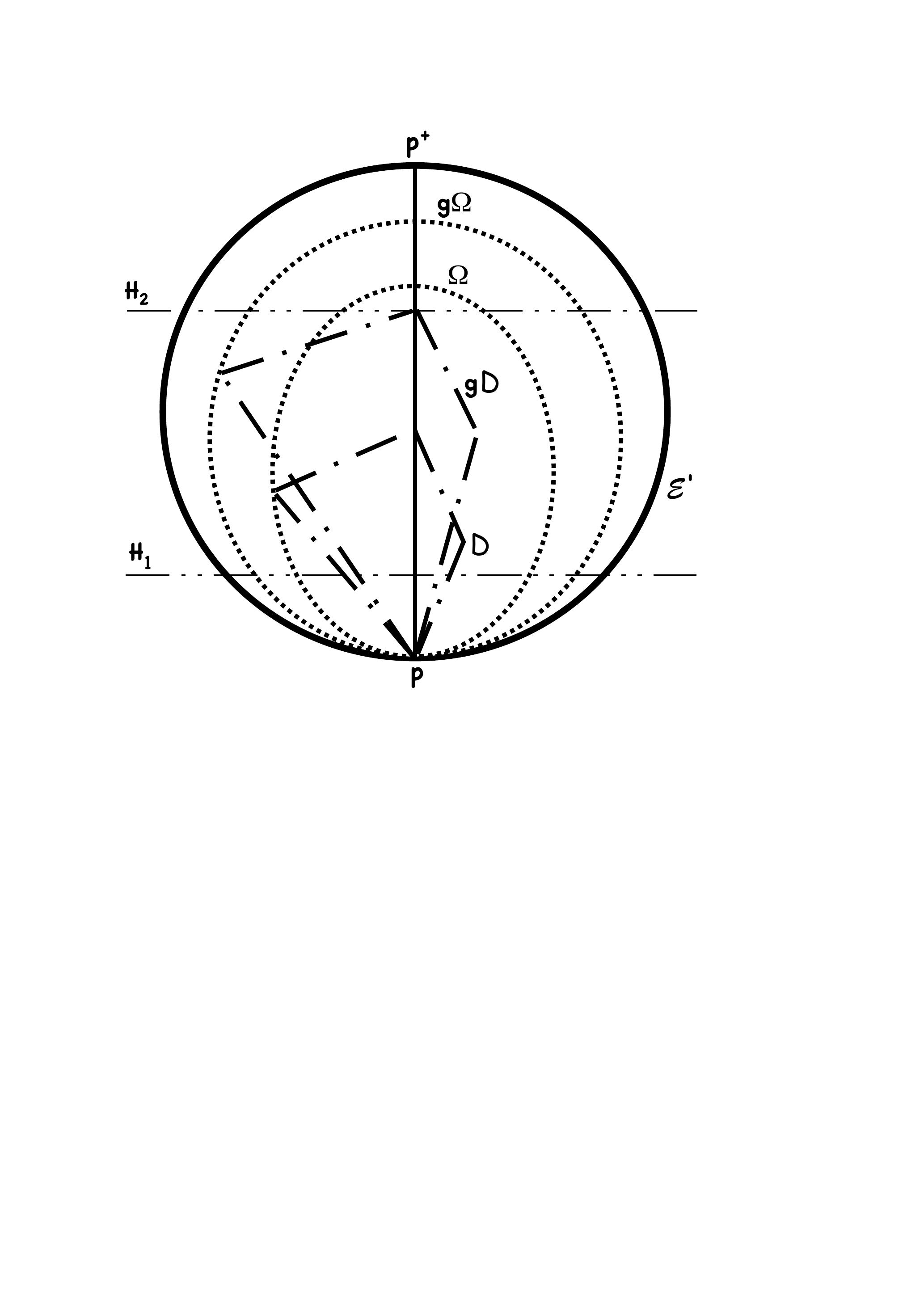}
\caption{Démonstration de la Gromov-Hyperbolicité de $\O$} \label{dem_gro}
\end{center}
\end{figure}

Choisissons un hyperplan $H_1$ qui coupe $\O$ le long d'un ouvert proprement convexe de codimension 1.
L'ouvert $\E'_p \setminus H_1$ est composé de deux composantes connexes, celles "en-dessous" de $H_1$ qui contient $p$ dans son adhérence et celle "au-dessus" de $H_1$ qui contient $p^+$ dans son adhérence. On choisit un second hyperplan $H_2$ parallèle dans $\E'_p$, à distance $h = 10 \times t_{max}$ de $H_1$ et au-dessus de $H_1$.

Après on observe la suite $(u_n)_{n \in \N}$. Il existe un entier $n_0$ tel que pour tout $n \geqslant n_0$, la suite  $u_n$ est sous $H_1$. Ensuite, on applique $g$, la suite $(g^k \cdot u_{n_0})_{k \in \N}$ converge vers $p^+$. La distance entre les hyperplans $H_1$ et $H_2$ est suffisament grande pour qu'il existe un entier $k_0$ tel que $g^{k_0} \cdot u_{n_0}$ soit entre les deux hyperplans.

Ensuite, Il existe un entier $n_1$ tel que pour tout $n \geqslant n_1$, la suite  $g^{k_0} \cdot u_n$ est sous $H_1$. La distance entre les hyperplans $H_1$ et $H_2$ est suffisament grande pour qu'il existe un entier $k_1$ tel que $g^{k_1} \cdot u_{n_1}$ soit entre les deux hyperplans.

Ce procédé montre qu'il existe deux extractrices $(k_i)_{i \in \N}$ et $(n_i)_{i \in \N}$ tels que:
\begin{enumerate}
\item $g^{k_i} \cdot\O \underset{i \to \infty}{\to} \E'_{p}$
\item $g^{k_i} \cdot u_{n_i} \underset{i \to \infty}{\to} u'_{\infty} \in \overline{\E'_{p}}$.
\end{enumerate}

Mais la suite $(u_n)_{n \in \N}$ est incluse dans le domaine fondamental $D$ par conséquent, $u'_{\infty} \in \E'_{p}$.
On peut bien entendu quitte à extraire encore une fois supposer que l'on a aussi les convergences suivantes:

\begin{enumerate}
\item $g^{k_i} \cdot x_{n_i} \underset{i \to \infty}{\to} x'_{\infty} \in \overline{\E'_{p}}$.
\item $g^{k_i} \cdot y_{n_i} \underset{i \to \infty}{\to} y'_{\infty} \in \overline{\E'_{p}}$.
\item $g^{k_i} \cdot z_{n_i} \underset{i \to \infty}{\to} z'_{\infty} \in \overline{\E'_{p}}$.
\end{enumerate}

Les triangles $g^k \cdot T_n$ de $g^k(\O)$ sont isométriques au triangle $T_n$ de $\O$. Par conséquent, on a  $d_{\E'_{p}}(u'_{\infty},[x'_{\infty},z'_{\infty}] \cup [z'_{\infty},y'_{\infty}]) = \infty$. Mais un ellipsoïde muni de sa distance hyperbolique est un espace Gromov-hyperbolique (c'est l'espace hyperbolique réel) donc ceci entraine que $x'_{\infty} = z'_{\infty}$ ou $y'_{\infty} = z'_{\infty}$ et par suite $x'_{\infty} = y'_{\infty}$. Il vient donc que $u'_{\infty} = x'_{\infty} = y'_{\infty}$, ce qui est absurde.
\end{proof}

\begin{coro}
L'ouvert proprement convexe $(\mho_t,d_{\mho_t})$ est Gromov-Hyperbolique.
\end{coro}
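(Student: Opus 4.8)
Le plan est d'appliquer directement le lemme \ref{dem_gro_hyp} à l'ouvert proprement convexe $\O = \mho_t$, au groupe discret $\G = \Lambda_t$ et au domaine fondamental $D = D_t$ construit plus haut. Presque tout le travail a en réalité déjà été effectué : il ne reste qu'à constater que les quatre hypothèses du lemme sont satisfaites dans notre situation, puis à invoquer sa conclusion. Je vérifierais donc les hypothèses une par une.

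Les hypothèses (1), (2) et (3) sont exactement le contenu de la proposition \ref{dom_fond_proj}. Celle-ci fournit en effet un domaine fondamental $D_t$ pour l'action de $\Lambda_t$ sur $\mho_t$ qui est connexe et réunion finie de convexes (hypothèse (1)), et dont le bord $\partial D_t$ est fini (hypothèse (2)). De plus, pour chaque point $p \in \partial D_t \cap \partial \mho_t$, la même proposition produit deux ellipsoïdes $\F_p$ et $\F'_p$ vérifiant $\F_p \subset \mho_t \subset \F'_p$, ainsi que $\partial \F_p \cap \partial \mho_t = \partial \F'_p \cap \partial \mho_t = \{ p \}$, et tels que $\F_p$ soit une horoboule de $\F'_p$. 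C'est précisément l'hypothèse (3), la propriété d'horoboule correspondant à celle effectivement utilisée dans la démonstration du lemme \ref{dem_gro_hyp}. Quant à l'hypothèse (4), à savoir la stricte convexité de $\mho_t$, elle est exactement le corollaire qui précède. Les quatre hypothèses étant remplies, le lemme \ref{dem_gro_hyp} s'applique et donne que $(\mho_t, d_{\mho_t})$ est Gromov-hyperbolique.

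Il n'y a donc pas d'obstacle véritable dans cette dernière étape : la difficulté a été entièrement absorbée en amont, d'une part par la proposition \ref{dom_fond_proj}, dont la preuve a nécessité l'analyse soigneuse des pointes (en particulier le fait que $\Stab_{\Lambda_t}(p)$ préserve un ellipsoïde, via les lemmes \ref{lem_eucli} et \ref{vp}), et d'autre part par l'établissement de la stricte convexité de $\mho_t$. Le corollaire se réduit ainsi à la synthèse de ces résultats au moyen du critère de Gromov-hyperbolicité fourni par le lemme \ref{dem_gro_hyp}.
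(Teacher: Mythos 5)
Votre démonstration est correcte et suit essentiellement la même voie que celle du papier : appliquer le lemme \ref{dem_gro_hyp} en vérifiant ses hypothèses via la proposition \ref{dom_fond_proj} et la stricte convexité déjà établie. Vous êtes même légèrement plus précis que le texte original, qui attribue les quatre hypothèses à la seule proposition \ref{dom_fond_proj}, alors que l'hypothèse de stricte convexité provient, comme vous le signalez, du corollaire qui précède.
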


\begin{proof}
La proposition \ref{dom_fond_proj} montre que l'action de $\Lambda_t$ sur $\mho_t$ vérifie tous les points du lemme précédent.
\end{proof}

\backmatter
\bibliography{../Biblio/biblio}
\bibliographystyle{alphaurl}

\end{document}